\theoremstyle{plain}
\newtheorem{theorem}{Theorem}[section]
\newtheorem{corollary}[theorem]{Corollary}
\newtheorem{lemma}[theorem]{Lemma}
\newtheorem*{theorem*}{Theorem}
\theoremstyle{remark}
\newtheorem{remark}[theorem]{Remark}
\theoremstyle{definition}
\newtheorem{example}[theorem]{Example}
\numberwithin{equation}{theorem}
\newcommand{\CC}{\mathbb{C}}
\newcommand{\NN}{\mathbb{N}}
\newcommand{\ZZ}{\mathbb{Z}}
\newcommand{\m}{\mathfrak{m}}
\newcommand{\p}{\mathfrak{p}}
\newcommand{\q}{\mathfrak{q}}
\newcommand{\Spec}{\operatorname{Spec}}
\newcommand{\Min}{\operatorname{Min}}
\newcommand{\Max}{\operatorname{Max}}
\newcommand{\Supp}{\operatorname{Supp}}
\newcommand{\Jac}{\operatorname{Jac}}
\newcommand{\Hom}{\operatorname{Hom}}
\newcommand{\GL}{\operatorname{\mathbf{GL}}}
\newcommand{\End}{\operatorname{End}}
\newcommand{\sr}{\operatorname{sr}}
\newcommand{\grade}{\operatorname{grade}}
\newcommand{\opp}{\operatorname{opp}}
\begin{document}

\title[Cancellation of Finite-Dimensional Noetherian Modules]{Cancellation of Finite-Dimensional Noetherian Modules}
\author{Robin Baidya}
\email{rbaidya@utk.edu}
\address{Department of Mathematics, The University of Tennessee, Knoxville, Tennessee 37996}
\curraddr{}
\thanks{}
\date{\today}
\keywords{Basic element, cancellation, Euclidean domain, factorial domain, stable range, stable rank, test point, unit-regular ring} 
\subjclass[2010]{Primary 13E05; Secondary 13C05, 13E15, 13F15, 13H10, 14M05, 16E50, 16S50}
\begin{abstract}
The Module Cancellation Problem solicits hypotheses that, when imposed on modules $K$, $L$, and $M$ over a ring $S$, afford the implication $K\oplus L\cong K\oplus M\Longrightarrow L\cong M$.  In a well-known paper on basic element theory from 1973, Eisenbud and Evans lament the ``great scarcity of strong results" in module cancellation research, expressing the wish that, ``under some general hypothesis" on finitely generated modules over a commutative Noetherian ring, cancellation could be demonstrated.  Singling out cancellation theorems by Bass and Dress that feature ``large" projective modules, Eisenbud and Evans contend further that, although ``[s]ome criteria of `largeness' is certainly necessary in general [\ldots,] the need for projectivity is not clear."  In this paper, we prove that cancellation holds if $K$,~$L$, and $M$ are finitely generated modules over a commutative Noetherian ring $S$ such that $K_{\p}^{\oplus (1+\dim (S/\p))}$ is a direct summand of $M_{\p}$ over $S_\p$ for every prime ideal $\p$ of~$S$.  We also weaken projectivity conditions in the cancellation theorems of Bass and Dress and a newer theorem by De~Stefani--Polstra--Yao; in fact, we obtain a statement that unifies all three of these theorems while obviating a projectivity constraint in each one.  To illustrate the scope of our work, we construct a cancellation example that simultaneously eludes the three theorems just mentioned as well as many other observations from the module cancellation literature.
\end{abstract}
\commby{}
\maketitle

\setcounter{section}{+0} 

\section{Introduction}\label{sec:intro}

The Module Cancellation Problem solicits hypotheses that, when imposed on modules $K$,~$L$, and $M$ over a ring $S$, afford the implication $K\oplus L\cong K\oplus M\Longrightarrow L\cong M$.  The problem traces back to a theorem of Frobenius--Stickelberger from 1879 asserting that, up to isomorphism, a finite abelian group is completely determined by a list of its elementary divisors~\cite{FS}.  The Module Cancellation Problem therefore predates the axiomatic definitions of \textit{module} and \textit{ring}, which first appear in a work by Dedekind from 1893~\cite[page~255]{Dor}.

There are essentially four areas of module cancellation research now.

Theorems from one area abide by the premise that $S$ is a module-finite algebra over a commutative ring of dimension at most 2.  From the members of this family, we learn, for instance, that cancellation holds for finitely generated modules over a 0-dimensional commutative ring (Goodearl--Warfield~\cite[Theorem~18]{GW}); finitely generated modules over a Dedekind domain (Hs\"u~\cite[Theorem~1]{Hsu}); and finitely generated torsion-free modules over a 2-dimensional regular affine $\CC$-domain, where $\CC$ denotes the field of all complex numbers (Wiegand~\cite[Theorem~1.2]{Wie1}).  Additional statements of this type abound in the following articles:
\cite{Cha}
\cite{Gur-Lev}
\cite{Hassler0}
\cite{Hassler}
\cite{HW}
\cite{Karr}
\cite{Levy}
\cite{LW}
\cite{Rush}
\cite{Wie1}
\cite{Wie2}.

Accomplishments of another strain require $K$, $L$, or $M$ to be a direct sum of indecomposable modules.  The Krull--Schmidt Theorem~\cite[Theorem~X.7.5]{Lang}, for example, guarantees that cancellation holds for finite-length modules over an associative ring, and a series of observations due to Matlis~\cite[Theorems~2.4 and~2.5 and Propositions~2.7 and~3.1]{Mat} ensures that cancellation holds for injective modules with finite Bass numbers over a commutative Noetherian ring.  A more delicate result of Vasconcelos~\cite[Corollary]{Vasc} attests that cancellation holds if $S$ is a Noetherian normal domain with a torsion-free Picard group, $K$ is a finitely generated $S$-module, and $L:=I^{\oplus n}$ and $M:=J^{\oplus n}$ for some ideals $I$ and $J$ of $S$ and some nonnegative integer $n$.  More information on direct sums of indecomposable modules can be found in the following works:
\cite{Fac}
\cite{Fuchs-Vamos}
\cite{Haefner}
\cite{Levy-Odenthal}.

Advances of a third variety impose finiteness on $\sr(\End_S(K))$, the stable rank of the ring $\End_S(K)$ of all $S$-linear endomorphisms of $K$.  (See Section~\ref{sec:foundations}, Subsection ``Stable rank".)  One such contribution by Evans~\cite[Theorem~2]{Eva} certifies that cancellation holds as long as $\sr(\End_S(K))=1$; another result due to Warfield (as a solo author)~\cite[Theorems~1.2 and~1.6]{War} avouches that cancellation holds if $\sr(\End_S(K))$ is finite and $K^{\oplus\sr(\End_S(K))}$ is a direct summand of $M$ over $S$.  Suslin~\cite[Corollary~8.4]{Sus2} supplies a large collection of examples to which we may apply the cancellation theorems of Evans and Warfield:  \textit{For every nonnegative integer~$d$, every $d$-dimensional affine $\CC$-algebra has stable rank $1+d$.}   The abovementioned theorems of Goodearl--Warfield and Krull--Schmidt embody two additional examples of Evans's Cancellation Theorem.  We direct the reader to the following sources for further information on stable rank:
\cite{Bas}
\cite{Chen}
\cite{CLQ}
\cite{EO}
\cite{Gab}
\cite{Hein}
\cite{Heit2}
\cite{Heit}
\cite{Lam}
\cite{Sta}
\cite{SV} 
\cite{Vas}
\cite{Vas2}.

Findings from a fourth clade of cancellation theorems feature projective modules; here, we focus on three such achievements due to Bass~\cite[Theorem~9.3]{Bas}, Dress~\cite[Theorem~2]{Dress}, and De~Stefani--Polstra--Yao~\cite[Theorem~3.14]{DSPY}.  These three theorems all begin with the supposition that $S$ is an algebra over a commutative ring $R$ with a finite-dimensional Noetherian maximal spectrum $\Max(R)$ or, equivalently (\cite[Corollary following Proposition~1]{Swa}), a finite-dimensional Noetherian $j$-spectrum $j$-$\Spec(R)$. (See Section~\ref{sec:foundations}, Subsection ``Topology".)  With this hypothesis in place, Bass declares that cancellation holds if $S$ is a module-finite $R$-algebra, $K$~is a finitely generated projective $S$-module, and $M$ is an $S$-module admitting a projective direct summand $P$ over $S$ with $S_\m^{\oplus (1+\dim (\Max(R)))}$ a direct summand of $P_\m$ over $S_\m$ for every $\m\in \Max(R)$.  Dress extends Bass by first introducing two new $S$-modules $N$ and $P$:  Dress takes $N$ to be a finitely presented $S$-module such that $\End_S(N)$ is a module-finite $R$-algebra, and Dress assumes that $P$ is a direct summand of $M$ over $S$ such that $N_\m^{\oplus (1+\dim(\Max(R)))}$ is a direct summand of $P_\m$ over $S_\m$ for every $\m\in\Max(R)$.  Dress also supposes that $M$ is a direct summand of a direct sum of finitely presented $S$-modules and that $K$ and $P$ are direct summands of a direct sum of finitely many copies of $N$ over $S$; the last condition on $K$ and $P$ is a projectivity hypothesis since it implies that $\Hom_S(N,K)$ and $\Hom_S(N,P)$ are projective right modules over $\End_S(N)$.  Using all of these constraints, Dress proves the implication $K\oplus L\cong K\oplus M\Longrightarrow L\cong M$.  De~Stefani, Polstra, and Yao extend Bass in a different direction, verifying that cancellation holds if $R$ and $S$ are the same ring, $K$ is a finitely generated projective $S$-module, and $M$ is a finitely generated $S$-module with $S_\p^{\oplus (1+\dim_X(\p))}$ a direct summand of $M_\p$ over $S_\p$ for every $\p\in X:=j$-$\Spec(S)$.  The following articles offer more information on the cancellation properties of various projective modules:
\cite{DK}
\cite{Duc}
\cite{Gupta}
\cite{Heit}
\cite{Kap}
\cite{MS}
\cite{Qui}
\cite{Rao}
\cite{Sta}
\cite{Sus}.

Many cancellation counterexamples complement the preceding results.  One collection of counterexamples due to Kaplansky~\cite[Theorem~3]{Swa2} demonstrates the sharpness of the local hypotheses on~$M$ in Bass, Dress, and De~Stefani--Polstra--Yao:  \textit{For every positive integer $d$ different from $1$,~$3$,~and~$7$, if $S=K$ is the coordinate ring of the real $d$-sphere, $L$ is a free $S$-module of rank~$d$, and $M$ is the $S$-module corresponding to the tangent bundle of the real $d$-sphere, then $K\oplus L\cong K\oplus M$, but $L\not\cong M$.}  The reader can find additional instances of the failure of cancellation in the following works:
\cite{Bas2}
\cite{Cha}
\cite{GLW}
\cite{HW}
\cite{LW}
\cite{Wie1}
\cite{Wie2}.

On one hand, the numerous sources referenced here illustrate that module cancellation theory has grown a significant amount since its origins.  On the other hand, the trends in module cancellation research described above have existed since 1973, suggesting that the terrain of this discipline has not changed dramatically over the last forty years or so.  Accordingly, critical remarks made by Eisenbud and Evans in 1973 on the nature of this field are still relevant today:  In a well-known paper on basic element theory from 1973, Eisenbud and Evans lament the ``great scarcity of strong results" in module cancellation research, expressing the wish that, ``under some general hypothesis" on finitely generated modules over a commutative Noetherian ring, cancellation could be demonstrated~\cite[page~302]{EE}.  Acknowledging that Kaplansky's counterexamples contextualize the ``large" projective modules in Bass and Dress, Eisenbud and Evans nevertheless contend that, although ``[s]ome criteria of `largeness' is certainly necessary in general [\ldots,] the need for projectivity is not clear"~\cite[page~302]{EE}.  By subsequently citing cancellation theorems by Vasconcelos~\cite[Corollary]{Vasc} and Chase~\cite[Theorem~3.7]{Cha} that already avoid projectivity conditions~\cite[page~302]{EE}, Eisenbud and Evans intimate, moreover, that they envision a different theorem still---a cancellation theorem that not only obviates projectivity hypotheses but also accommodates a substantial class of finitely generated modules over every commutative Noetherian ring.

In response to the foregoing entreaty of Eisenbud and Evans, we offer three cancellation results in this paper. 

Theorem~\ref{theorem:short} features a ``general hypothesis" that affords cancellation for finitely generated modules $K$, $L$, and $M$ over a commutative Noetherian ring $S$.   As the reader can verify, our hypothesis weakens the local constraints on $M$ in Bass, Dress, and De~Stefani--Polstra--Yao, consequently boasting sharpness in light of Kaplansky's counterexamples.  The title of our paper honors the fact that, when $S$ is a Jacobson ring, the premises of the following theorem imply that $K$ is a finite-dimensional Noetherian $S$-module.

\begin{theorem}[{cf.~Corollary~\ref{corollary:gen-DSPY}}]\label{theorem:short}
Let $K$, $L$, and $M$ be finitely generated modules over a commutative Noetherian ring $S$ such that $K_{\p}^{\oplus (1+\dim_X(\p))}$ is a direct summand of $M_{\p}$ over $S_\p$ for every prime ideal $\p\in X:=j$-$\Spec(S)\cap\Supp_S(K)$.  Then $K\oplus L\cong K\oplus M\Longrightarrow L\cong M$.   
\end{theorem}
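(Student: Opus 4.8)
The plan is to imitate the route of Warfield's cancellation theorem but to replace its uniform hypothesis on $\sr(\End_S(K))$ by a prime‑by‑prime condition calibrated to the function $\p\mapsto 1+\dim_X(\p)$, so that no global finiteness of $\dim S$ is needed (indeed the hypothesis already forces $\dim_X(\p)<\infty$ at each $\p\in X$, since $\p\in\Supp_S(K)$ rules out $K_\p=0$). Throughout, write $E:=\End_S(K)$; because $S$ is Noetherian and $K$ is finitely generated, $E$ is a module-finite (noncommutative) $S$-algebra, the functor $\Hom_S(K,-)$ sends finitely generated $S$-modules to finitely generated right $E$-modules, and $\Hom_S(K,M')_\p\cong\Hom_{S_\p}(K_\p,M'_\p)$ for every $\p\in\Spec S$. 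Under $\Hom_S(K,-)$ the hypothesis that $K_\p^{\oplus(1+\dim_X(\p))}$ is a summand of $M_\p$ translates into: $E_\p^{\oplus(1+\dim_X(\p))}$ is a direct summand of $\Hom_S(K,M)_\p$ for every $\p\in X$, while $E_\p=0$ for $\p\notin\Supp_S(K)$. So the task becomes a cancellation problem for the ring $E$ and the right $E$-module $\Hom_S(K,M)$ with a variable, always-finite local bound.

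First I would recast ``$K$ cancels'' as a transitivity statement. Fix an isomorphism $\alpha\colon K\oplus L\xrightarrow{\sim}K\oplus M$. Then $K\oplus 0$ and $\alpha(K\oplus 0)$ are two complemented copies of $K$ inside $K\oplus M$; if some automorphism $\theta$ of $K\oplus M$ carries $K\oplus 0$ onto $\alpha(K\oplus 0)$, then $M\cong L$, because complements of a fixed summand are mutually isomorphic. Applying $\Hom_S(K,-)$, producing $\theta$ amounts to carrying one unimodular element of the right $E$-module $\Hom_S(K,K\oplus M)=E\oplus\Hom_S(K,M)$ onto another by an automorphism of that module realized on the level of $K\oplus M$ — a classical stable-range problem, which I want to solve using only the local estimates above. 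Here it is essential to track the \emph{actual} copy of $K$ in the original decomposition, so that cancelling it really yields $L\cong M$ and not merely $\Hom_S(K,L)\cong\Hom_S(K,M)$ (the latter being too weak since $\Hom_S(K,-)$ need not be faithful when $\Supp_S(L)\not\subseteq\Supp_S(K)$).

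The main work — and the step I expect to contain essentially all of the difficulty — is a Bass / Eisenbud--Evans basic-element argument over the noncommutative ring $E$, governed by $\Max(S)$ (equivalently by $j$-$\Spec(S)$, which is Noetherian of the relevant dimension type along $\Supp_S(K)$): starting from ``at least $1+\dim_X(\p)$ copies of $E$ split off $\Hom_S(K,M)$ locally at each $\p\in X$'', I would peel copies of $E$ off $\Hom_S(K,M)$ one at a time, the whole point of the extra ``$+1$'' being that after splitting one \emph{global} copy of $E$ the residual right $E$-module still satisfies the same estimate with $\dim_X(\p)$ in place of $1+\dim_X(\p)$, so the induction closes; it bottoms out either at stable rank $1$, where Evans's theorem applies, or at the $0$-dimensional situation, where the Goodearl--Warfield theorem applies, with Warfield's shortening of unimodular elements doing the intermediate steps. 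The delicate points I anticipate are: handling the noncommutativity of $E$ correctly, by phrasing basic elements of right $E$-modules over the center of $E$ (which maps onto a quotient of $S$ whose maximal spectrum is Noetherian and finite-dimensional along $\Supp_S(K)$) and selecting an adequate set of test points at which the local condition must be verified; checking that each ``peel one copy'' move preserves the precise local bound rather than merely a weaker one; and arranging the bookkeeping so that the copy ultimately cancelled is the distinguished $K\oplus 0$. By contrast, the passages to and from $E$ (Step 1) and the transitivity reformulation (Step 2) should be routine once the commutation of $\Hom_S(K,-)$ with localization is in hand; in the paper this is the specialization of the unifying result cited as Corollary~\ref{corollary:gen-DSPY}.
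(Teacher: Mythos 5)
Your outline follows the paper's own route: pass to $E:=\End_S(K)$, reformulate cancellation as carrying one complemented copy of $K$ inside $K\oplus M$ onto another by an automorphism realized on $K\oplus M$ itself (not merely on the $E$-module $\Hom_S(K,K\oplus M)$), and close the induction by a basic-element/test-point reduction in which the ``$+1$'' in $1+\dim_X(\p)$, combined with stable rank one of each $E_\p$, lets the column-reduction step preserve the local split estimates---this is exactly the content of the paper's Main Lemma~\ref{lemma:main} and Theorem~\ref{theorem:main-later}, specialized via Corollary~\ref{corollary:gen-DSPY}. The only real divergence is cosmetic: you phrase the local hypothesis through $\Hom_S(K,-)$ and split injections $K^{\oplus m}\to M$ in a right $E$-module, whereas the paper works dually with a finitely generated left $E$-submodule $F$ of $\Hom_S(M,K)$ and split surjections $M\to K^{\oplus m}$, the two formulations being interchangeable.
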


Theorem~\ref{theorem:main}, our main theorem, unifies the cancellation results of Bass, Dress, and De~Stefani--Polstra--Yao while relaxing a projectivity criterion in each.  Since this fact may not be apparent from a simple glance at our main theorem, we prove this point later in the paper by marshalling two corollaries of our main theorem (Corollary~\ref{corollary:gen-Bass} and Corollary~\ref{corollary:gen-DSPY}).  These corollaries reveal, among other things, that we can remove the projectivity constraint on $K$ in Bass and De~Stefani--Polstra--Yao, delete the requirement in Dress that $P$ be a direct summand of a direct sum of finitely many copies of $N$ over~$S$, and replace the projectivity hypothesis on $P$ in Bass with the milder requirement that $P$ be a direct summand of a direct sum of finitely presented $S$-modules.  We do not attempt to weaken Dress's requirement that $K$ be a direct summand of a direct sum of finitely many copies of $N$ over $S$ since we see this hypothesis as an embellishment rather than as a restriction; the chief case of interest is when $K=N$, and our main theorem easily reduces to this case.

\begin{theorem}[Main Theorem, cf.~Theorem~\ref{theorem:main-later}]\label{theorem:main}
	Let $K$, $L$, $M$, and $N$ be right modules over a ring $S$ that is an algebra over a commutative ring $R$, and let $E:=\End_S(N)$ denote the $R$-algebra of all $S$-linear endomorphisms of $N$.  Assume the following:
	\begin{enumerate}
		\item $X:=j$-$\Spec(R)\cap\Supp_R(N)$ is a Noetherian subspace of the Zariski space $\Spec(R)$.
		\item $N$ is a finitely presented $S$-module, and $E$ is a module-finite $R$-algebra.
		\item There is a finitely generated left $E$-submodule $F$ of $\Hom_S(M,N)$ such that, for every $\p\in X$, there is a split surjection in $F_\p^{\oplus (1+\dim_X(\p))}\subseteq\Hom_{S_\p}\left(M_\p,N_\p^{\oplus(1+\dim_X(\p))}\right)$.
	\end{enumerate}
	Then $N\oplus L\cong N\oplus M\Longrightarrow L\cong M$.  More generally, if $K$ is a direct summand of a direct sum of finitely many copies of $N$ over $S$, then $K\oplus L\cong K\oplus M\Longrightarrow L\cong M$.
\end{theorem}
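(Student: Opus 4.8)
The full argument is given later as Theorem~\ref{theorem:main-later}; here I describe the strategy. The first step is to reduce to the case $K=N$. Writing $N^{\oplus k}\cong K\oplus K'$ and adding $K'$ to both sides of $K\oplus L\cong K\oplus M$ gives $N^{\oplus k}\oplus L\cong N^{\oplus k}\oplus M$, so it suffices to cancel one copy of $N$ at a time; to cancel $N$ from $N^{\oplus(k-j)}\oplus L\cong N^{\oplus(k-j)}\oplus M$ one applies the case $K=N$ with $M$ replaced by $N^{\oplus(k-j-1)}\oplus M$. Hypotheses (1) and (2) are untouched, and (3) persists: replace $F$ by its image under $\Hom_S(M,N)\hookrightarrow\Hom_S(N^{\oplus(k-j-1)}\oplus M,N)$ (extend each homomorphism by zero on the new summand), noting that a split surjection $M_\p\onto N_\p^{\oplus(1+\dim_X(\p))}$ remains one out of $(N^{\oplus(k-j-1)}\oplus M)_\p$ because its section still factors through $M_\p$. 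So from now on assume $N\oplus L\cong N\oplus M$ and fix such an isomorphism $\theta$.

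The plan is then to run a Warfield-type cancellation over the ring $E=\End_S(N)$. Applying the additive functor $\Hom_S(N,-)$ turns $\theta$ into an isomorphism $E\oplus\Hom_S(N,L)\cong E\oplus\Hom_S(N,M)$ of right $E$-modules; by hypothesis (2) this functor and $\Hom_S(-,N)$ commute with localization at primes of $R$, so the Hom-modules in play localize correctly and are supported over $X$. Hypothesis (2) also places $E$ among the module-finite $R$-algebras whose relevant spectrum is the Noetherian space $X$ of hypothesis (1), for which $\sr(E_\p)\le 1+\dim_X(\p)$ at each $\p\in X$---exactly the exponent in hypothesis (3). And the ``largeness'' input that a Warfield-style argument needs is precisely what (3) supplies: after applying $\Hom_S(N,-)$, the finitely generated $E$-module $F$ yields, at each $\p\in X$, a split surjection onto $N_\p^{\oplus(1+\dim_X(\p))}$, i.e.\ the $1+\dim_X(\p)$ basic elements needed to carry out the cancellation step over $E_\p$. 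Equivalently one may stay inside the category of $S$-modules: write $\theta$ in block form with $S$-linear components $a\in E$, $b\in\Hom_S(L,N)$, $c\in\Hom_S(N,M)$, $d\in\Hom_S(L,M)$, observe that $(a,b)\colon N\oplus L\onto N$ is a split surjection, and use the composites $\mu c\in E$ for $\mu\in F$ as the extra local coordinates to be fed into the stable-rank manipulation.

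The heart of the proof is the globalization. Because $\dim X$ may be infinite, one cannot invoke Warfield's cancellation theorem with a single finite bound on $\sr(E)$; instead the local cancellation steps must be assembled into one global elementary modification by an Eisenbud--Evans/Bass ``basic element'' argument---prime avoidance over the Noetherian space $X$, with the finite generation of $F$ keeping the process finite and the matching exponents $1+\dim_X(\p)$ guaranteeing enough room at every stage. Carrying this out produces a block-diagonal isomorphism $N\oplus L\to N\oplus M$ (or, in the $E$-module formulation, an isomorphism $\Hom_S(N,L)\cong\Hom_S(N,M)$ which, by finite presentation of $N$, is induced by one $L\cong M$), and hence $L\cong M$ as desired.

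The main obstacle is exactly this globalization: converting the pointwise split surjections of hypothesis (3), together with the pointwise stable-rank bound on $E$, into a single global modification of $\theta$. The Noetherianity in (1) is what licenses the prime-avoidance induction, the finite generation of $F$ is what makes it terminate, and the precise exponent $1+\dim_X(\p)$ is what makes it succeed at each prime; aligning these three against the stable-rank estimate for module-finite $R$-algebras is the technical core, and is the reason the statement does not reduce directly to Bass, Dress, or De~Stefani--Polstra--Yao.
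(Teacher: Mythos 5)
Your skeleton is right: reduce to $K=N$ (the paper uses $Q\oplus K\cong N^{\oplus m}$ and induction on $m$), then globalize pointwise cancellations by a Bass/Eisenbud--Evans prime-avoidance argument over the Noetherian space $X$, with the exponent $1+\dim_X(\p)$ playing against the local stable rank of $E_\p$. This is exactly what the paper's main lemma (Lemma~\ref{lemma:main}) packages: starting from the first row $(e,f_1)\in\Hom_S(N\oplus M,N)$ of the isomorphism and generators $f_2,\ldots,f_n$ of $F$, it iteratively reduces the $X$-split column $(f_1,\ldots,f_n)^\top$ to a single split surjection $f_1+ef_0$, from which an explicit unit of $\End_S(N\oplus M)$ block-triangularizes the isomorphism and the Five Lemma gives $L\cong M$.

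However, the route you foreground---applying $\Hom_S(N,-)$ and cancelling over $E$---has a genuine gap at its final step. From an isomorphism $\Hom_S(N,L)\cong\Hom_S(N,M)$ of $E$-modules you cannot in general deduce $L\cong M$ as $S$-modules; ``finite presentation of $N$'' does not do this. You would need $N$ to be a progenerator, or $L$ and $M$ to be summands of sums of copies of $N$ so that $\Hom_S(N,-)$ is an equivalence on the relevant subcategory---which is precisely the Dress-style projectivity hypothesis this theorem is designed to drop. The paper never leaves the category of $S$-modules for exactly this reason, and your parenthetical ``equivalently, stay inside $S$-modules'' version is the one that corresponds to its proof. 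Even there, note a mismatch worth flagging: the paper modifies the off-diagonal block $f_1\in\Hom_S(M,N)$ toward a split surjection using elements of $F$ directly (the modification of $f_1$ is always routed through $e$, as in Bass's ``technical little argument,'' so that split surjectivity of the row $(e,\cdot)$ is preserved), whereas you propose pushing the composites $\mu c\in E$ toward a unit in the corner; composing with $c$ can degrade the local splitting data that Condition~(3) guarantees for $F$, and a unit in the $E$-corner is a stronger target than the argument actually needs.
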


To illustrate the scope of our work, we include a cancellation example that follows from Theorems~\ref{theorem:short} and \ref{theorem:main} but \textit{eludes every other cancellation theorem cited above}.  In preparation for this example, we recall the Jacobian criterion for regularity~\cite[Corollary~16.20]{Eis}, which implies the following fact: \textit{If $S$ is an affine $\CC$-domain of dimension $d\geqslant 1$, then, for every $h\in\{1,\ldots,d\}$, there are infinitely many height-$h$ prime ideals $\q$ of $S$ such that $S_\q$ is factorial.}

\begin{example}[{cf.~Example~\ref{example:1-7}}]\label{example:1}
		\textit{Assume the following:
			\begin{enumerate}
				\item $S$ is an affine $\CC$-domain of dimension $d\geqslant 3$.
				\item $K:=\q$ is a prime ideal of $S$ such that $S_\q$ is factorial and $2\leqslant \textnormal{height}(\q)\leqslant d-1$.
				\item $M:=\q^{\oplus d}\oplus S$.
			\end{enumerate}
			Then, for every $S$-module~$L$, we have $K\oplus L\cong K\oplus M\Longrightarrow L\cong M$.}
\end{example}

\noindent Toward the end of the paper, we explain why this example falls outside the purview of previously published results.

Before then, we establish our main theorem and discuss two of its corollaries.  Section~\ref{sec:foundations} lays the foundation for our work, covering conventions, definitions, and facts employed in subsequent sections.  The proof of our main theorem begins in Section~\ref{sec:sr1} with a study of modules with endomorphism rings of stable rank 1.  In  Section~\ref{sec:unit-regular}, we strengthen our hypotheses slightly, treating the case of a module with an endomorphism ring that is unit-regular modulo its Jacobson radical.  Section~\ref{sec:main-lemma} contains our main lemma (Lemma~\ref{lemma:main}), the statement that constitutes the most difficult step in the proof of our main theorem.  In Section~\ref{sec:main-theorem}, we formulate and certify our main theorem along with two immediate ramifications.  The first of these is Corollary~\ref{corollary:gen-Bass}, which generalizes the cancellation theorems of Bass and Dress; the second is Corollary~\ref{corollary:gen-DSPY}, which generalizes Theorem~\ref{theorem:short} and the De~Stefani--Polstra--Yao Cancellation Theorem.

The last section of our paper (Section~\ref{sec:examples}) addresses why Example~\ref{example:1} evades prior responses to the Module Cancellation Problem.

\section{Foundations}\label{sec:foundations}

The purpose of this section is to collect conventions, definitions, and facts hailed throughout the rest of the paper.

\subsection*{General conventions}

Every ring is assumed to be associative with unity; every left and right module is taken to be unital; and every module over a commutative ring is presumed to be standard.  The \textit{center} Z$(S)$ of a ring $S$ is the commutative ring of all $a\in S$ such that $ab=ba$ for every $b\in S$.  A ring $S$ is an \textit{algebra} over a commutative ring $R$ with \textit{structure~map} $\sigma:R\rightarrow S$ if $\sigma$ is a ring homomorphism with $\sigma(1_R)=1_S$ and $\sigma(R)\subseteq\textnormal{Z}(S)$.  An algebra~$S$ over a commutative ring $R$ with structure map $\sigma$ is \textit{module-finite over $R$} if $S$ is finitely generated as an $R$-module, relative to $\sigma$.  If $N$ is a right module over a ring $S$, then the ring $\End_S(N)$ of all right $S$-linear endomorphisms of $N$ is understood to act on the left of $N$.  If a ring $S$ is a subset of a ring $E$ with $1_S=1_E$, then $S$ is a \textit{subring of~$E$}.

\subsection*{Associative rings}

Let $E$ be a ring.  The \textit{opposite ring $E^{\opp}$ of $E$} is the underlying abelian group of $E$ equipped with the reversed multiplication operation of $E$.  The ring $E$ is \textit{Noetherian} if every ascending chain of distinct right ideals in $E$ has finite length and every ascending chain of distinct left ideals in $E$ also has finite length; the ring $E$ is \textit{Artinian} if it satisfies the same property as above except with the word \textit{ascending} replaced by the word \textit{descending}.  An element $e$ of $E$ is (an) \textit{idempotent} if $e=e^2$.  An element $u$ of $E$ is a \textit{unit} if there is a (necessarily unique) element $v$ of $E$ with $uv=vu=1$, in which case we call $v$ the \textit{inverse} $u^{-1}$ of $u$.  The ring $E$ is \textit{Dedekind-finite} if, for every element $u$ of $E$, the existence of an element $v$ of~$E$ with $uv=1$ implies that $vu=1$.  A unit of $E$ in Z($E$) is a \textit{central unit}; a subring of Z($E$) is a \textit{central subring of $E$}.  For all positive integers $m$ and $n$, the symbol $\mathbf{M}_{m\times n}(E)$ stands for the set of all $m\times n$ matrices with entries in $E$, and the symbol $\GL_n(E)$ signifies the set of all units in the ring $\mathbf{M}_{n\times n}(E)$.  The \textit{Jacobson radical} $\Jac(E)$ of $E$ is the two-sided ideal of $E$ that is both the intersection of all maximal right ideals of $E$ and the intersection of all maximal left ideals of $E$.   The ring $E$ is \textit{unit-regular} if, for every element $a$ of $E$, there is a unit $u$ of $E$ with $a=aua$.   The ring $E$ is \textit{local} if it has a unique maximal right ideal or, equivalently, if it has a unique maximal left ideal. 

\subsection*{Modules}

Let $M$ and $N$ be right modules over a ring $S$.  The symbol $\Hom_S(M,N)$ refers to the set of all $S$-linear maps from $M$ to $N$.  The set $\Hom_S(M,N)$ is naturally a left module over $E:=\End_S(N)$.  A map $f\in\Hom_S(M,N)$ is a \textit{split surjection with section $g\in\Hom_S(N,M)$} if $fg=1_E$.  If $F$ is a finitely generated left $E$-submodule of $\Hom_S(M,N)$, then $\mu_E(F)$ refers to the minimum number of elements of $F$ needed to generate $F$ over $E$.  For every nonnegative integer $n$, the symbol $N^{\oplus n}$ signifies the direct sum of $n$ copies of $N$ over $S$ with $N^{\oplus 0}:=0$. If $f_1,\ldots,f_n\in\Hom_S(M,N)$ and $g_1,\ldots,g_n\in\Hom_S(N,M)$ for some positive integer $n$, then $(f_1,\ldots,f_n)^{\top}$ stands for the column of maps 
\[
\begin{pmatrix}
f_1\\
\vdots\\
f_n
\end{pmatrix}
\]
naturally representing a member of $\Hom_S(M,N^{\oplus n})$, and $(g_1,\ldots,g_n)$ naturally represents a member of $\Hom_S(N^{\oplus n},M)$.  The module $N$ is \textit{faithful over $S$} if $0_S$ is the only element $a$ of~$S$ such that $xa=0_N$ for every $x\in N$.

\subsection*{Topology}

Let $R$ be a commutative ring.  The \textit{prime spectrum} $\Spec(R)$ of $R$ is the set of all prime ideals of $R$ equipped with the Zariski topology; the \textit{maximal spectrum} $\Max(R)$ of $R$ is the subspace of $\Spec(R)$ consisting of the maximal ideals of $R$; and the \textit{$j$-spectrum} $j$-$\Spec(R)$ of $R$ is the subspace of $\Spec(R)$ composed of the prime ideals of $R$ that are intersections of maximal ideals of~$R$.  A nonempty subset of a topological space $X$ is \textit{irreducible} if it is not the union of two of its proper closed subsets; the \textit{Krull dimension} $\dim(X)$ of $X$ is the supremum of the lengths of chains of distinct closed irreducible sets in the space; for every $\p\in X$, the symbol $\dim_X(\p)$ refers to the dimension of the closure of $\{\p\}$ in $X$; and $X$ is \textit{Noetherian} if every descending chain of distinct closed sets in $X$ has finite length.  By Swan~\cite[Corollary following Proposition~1]{Swa}, $\Max(R)$ and $j$-$\Spec(R)$ have the same dimension, and one of these spaces is Noetherian if and only if the other is.  For every closed set $X$ in $j$-$\Spec(R)$, we let $\Min(X)$ denote the collection of the minimal members of $X$ with respect to set inclusion in~$R$.  If $j$-$\Spec(R)$ is Noetherian, then $\Min(X)$ is finite for every closed set $X$ in $j$-$\Spec(R)$~\cite[page~344]{EO}.  If $N$ is an $R$-module, then the \textit{support of $N$ over $R$}, denoted $\Supp_R(N)$, is the set of all $\p\in\Spec(R)$ with $N_\p\neq 0$.  If $N$ is a right module over an $R$-algebra $S$ such that  $E:=\End_S(N)$ is a module-finite $R$-algebra, then $\Supp_R(N)=\Supp_R(E)$ is closed in $\Spec(R)$, and consequently $j$-$\Spec(R)\cap\Supp_R(N)$ is closed in $j$-$\Spec(R)$.

\subsection*{The $\delta$ operator}

Let $M$ and $N$ be right modules over a ring $S$ that is an algebra over a commutative ring $R$, and let $F$ be a left submodule of $\Hom_S(M,N)$ over $E:=\End_S(N)$.  The symbol $\delta(F)$ signifies the supremum of the nonnegative integers $m$ such that $F^{\oplus m}$, when viewed as a subset of $\Hom_S(M,N^{\oplus m})$, harbors a split surjection.  For every $\p\in\Spec(R)$, the symbol $\delta(F_\p)$ denotes the supremum of the nonnegative integers $m$ for which a split surjection from $M_\p$ onto $N_\p^{\oplus m}$ can be found in $F_\p^{\oplus m}$.  

Suppose that $F:=Ef_1+\cdots+Ef_n$, where $f:=(f_1,\ldots,f_n)^{\top}\in\Hom_S(M,N^{\oplus n})$ for some positive integer $n$, and let $\q\in W\subseteq X:=j$-Spec$(R)\cap\Supp_R(N)$.  We deem $f$ to be \textit{$\q$-split} if $\delta(F_\q)\geqslant\min\{n,1+\dim_X(\q)\}$.  We say that $f$ is \textit{$W$-split} if $f$ is $\p$-split for every $\p\in W$.  The \textit{set $\mathit{\Lambda}$ of test points of $\delta(F_-)$ in $X$} refers to
\[
\bigcup_{m=0}^\infty\Min(Y_m),
\]
where $Y_m:=\{\p\in X:\delta(F_\p)\leqslant m\}$ for every nonnegative integer $m$. 

\subsection*{Commutative rings}

The symbol $\ZZ$ stands for the ring of all rational integers, and $\CC$ signifies the field of all complex numbers.  A \textit{Laurent monomial over a field $k$} is a formal expression $x_1^{g_1}\cdots x_m^{g_m}f$, where $x_1,\ldots,x_m$ are variables; $g_1,\ldots,g_m$ are integers; $m$~is a positive integer; and $f$ is an element of $k$.  A \textit{Laurent polynomial over a field $k$} is a sum of finitely many Laurent monomials over $k$.  An \textit{affine algebra over a field $k$} is a ring of the form $k[x_1,\ldots,x_m]/I$, where $I$ is an ideal of the \textit{polynomial ring $k[x_1,\ldots,x_m]$ in $m$ variables $x_1,\ldots,x_m$ over $k$} and where $m$ is a positive integer.  The \textit{height} of a prime ideal $\p$ in a commutative ring $R$ is the supremum of the lengths of chains of distinct prime ideals in $R$ with maximal member~$\p$.  A \textit{domain} is a commutative ring whose zero ideal is prime.  If $\p$ is a prime ideal of a $d$-dimensional affine domain $S$ over a field, then $\textnormal{height}(\p)+\dim_X(\p)=\dim(X)$, where $X:=j$-$\Spec(S)=\Spec(S)$.  A Noetherian domain is \textit{factorial} if every height-1 prime ideal in the ring is principal.  A domain is \textit{B\'ezout} if every finitely generated ideal of the ring is principal.  A domain is a \textit{principal ideal domain} if every ideal of the ring is principal.  Letting $\NN$ denote the set of all positive integers, we deem a domain $R$ to be \textit{Euclidean} if there is a function $\nu: R\setminus\{0\}\rightarrow\NN$ such that, for all $a,b\in R$ with $b\neq 0$, there are $q,r\in R$ such that $a=bq+r$ and such that either $r=0$ or $\nu(r)<\nu(b)$.

\subsection*{Stable rank}

Let $E$ be a ring.  The \textit{stable rank} $\sr(E)$ of $E$ is the infimum of the positive integers $m$ such that, for all integers $n>m$ and elements $a_1,\ldots,a_n$ of $E$ with \[Ea_1+\cdots+Ea_n=E,\] there are elements $b_1,\ldots,b_{n-1}$ of $E$ with \[E(a_1+b_1a_n)+\cdots+E(a_{n-1}+b_{n-1}a_n)=E.\]
If $S$ is a subring of $E$, then $\sr(S)\leqslant\sr(E)=\sr(E^{\opp})=\sr(E/\Jac(E))$ by Vaserstein~\cite[Lemma~3 and Theorem~2]{Vas} \cite[Theorem~1]{Vas2}.  By Kaplansky~\cite[Lemma~1.7]{Lam}, every ring of stable rank 1 is Dedekind-finite.  By Fuchs--Henriksen--Kaplansky~\cite[Theorem~2.9]{Lam}, every unit-regular ring has stable rank 1.  By Estes--Ohm~\cite[Theorem~5.3]{EO}, every principal ideal domain of stable rank~1 is Euclidean.  For every nonnegative integer $d$, every $d$-dimensional affine $\CC$-algebra has stable rank $1+d$ by Suslin~\cite[Corollary~8.4]{Sus2}.    If $E$ is a module-finite algebra over a commutative ring $R$ with a finite-dimensional Noetherian maximal spectrum, then $\sr(E)\leqslant 1+\dim(\Max(R))$ by Bass's Stable Range Theorem~\cite[Theorem~11.1]{Bas}.

\subsection*{Grade and depth}

Let $M$ be a finitely generated module over a commutative Noetherian ring $S$ with an ideal $I$ such that $IM\neq M$.  An element $a$ of $I$ is a \textit{nonzerodivisor on $M$} if, for every $x\in M$, the equation $ax=0$ implies that $x=0$.  An \textit{$M$-sequence in $I$} is a sequence $a_1,\ldots,a_n$ of elements of $I$ (for some positive integer $n$) such that, for every $m\in\{0,\ldots,n-1\}$, the element $a_{m+1}$ of $I$ is a nonzerodivisor on $M/(a_1M+\cdots+a_mM)$.  The \textit{grade of $I$ on $M$}, denoted $\grade_M(I)$, refers to the maximum of the lengths of $M$-sequences in $I$.  The natural map $M\rightarrow\Hom_S(I,M)$ is an isomorphism if and only if $\grade_M(I)\geqslant 2$.  If $N$ is a finitely generated $S$-module with $IN\neq N$, then $\grade_{M\oplus N}(I)=\min\{\grade_M(I),\grade_N(I)\}$.  If $S$ is a Noetherian factorial domain and $I$ is a prime ideal of height at least 2 in $S$, then $\grade_S(I)\geqslant 2$. If $S$ is a commutative Noetherian local ring with maximal ideal $\m$ such that $\grade_S(\m)\geqslant 1$, then $\grade_\m(\m)=1$.\\

\noindent We begin working toward a proof of our main lemma (Lemma~\ref{lemma:main}) in the next section.

\section{The case of a module with an endomorphism ring of stable rank 1}\label{sec:sr1}

Our sole objectives in Sections~\ref{sec:sr1}--\ref{sec:main-lemma} are to prove Lemma~\ref{lemma:DF} and to prove our main lemma (Lemma~\ref{lemma:main}); these observations are the only nonstandard results on which our main theorem (Theorem~\ref{theorem:main-later}) depends.  The proof of Lemma~\ref{lemma:DF} is quick, essentially relying on nothing more than the fact that a ring of stable rank 1 is Dedekind-finite~\cite[Lemma~1.7]{Lam}.  The proof of our main lemma, on the other hand, is quite complicated, resting on a large collection of statements that populate the remainder of this section and the next two sections.  We split this gallery of lemmas into three sections to help the reader keep track of the hypotheses active at various points in our discussion. 

Throughout this section, $M$ and $N$ stand for right modules over a ring $S$, and the ring $E:=\End_S(N)$ is assumed to have stable rank~$1$.   We direct the reader to Section~\ref{sec:foundations} for basic information on stable rank.

\begin{lemma}\label{lemma:DF}
	Let $f\in\Hom_S(M,N)$.  Then $Ef$ contains a split surjection if and only if $f$ is a split surjection.
\end{lemma}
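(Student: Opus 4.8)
The plan is to dispose of the easy implication first and then prove the nontrivial one by translating the split-surjection hypothesis into a one-sided-inverse relation inside the ring $E$ and invoking Dedekind-finiteness. If $f$ is a split surjection, then $f = 1_E f \in Ef$, so $Ef$ certainly contains a split surjection.

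Conversely, suppose $ef$ is a split surjection for some $e \in E$, and fix a section $g \in \Hom_S(N,M)$, so that $(ef)g = 1_E$. First I would note that $fg \in \End_S(N) = E$ and rewrite $(ef)g = 1_E$ as $e(fg) = 1_E$, which exhibits $fg$ as a right inverse of $e$ in $E$. Since $\sr(E) = 1$, the ring $E$ is Dedekind-finite by Kaplansky~\cite[Lemma~1.7]{Lam}; hence $e(fg) = 1_E$ forces $(fg)e = 1_E$ as well. Reading this last equation as $f(ge) = 1_E$ with $ge \in \Hom_S(N,M)$, I conclude that $f$ is a split surjection with section $ge$.

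There is essentially no obstacle here beyond keeping track of the order of composition: the whole point is that a split surjection lying in $Ef$ produces a \emph{right} inverse, rather than a left inverse, for the corresponding element of $E$, after which the standard implication ``stable rank $1$ implies Dedekind-finite'' closes the argument in a single line. This is precisely why Lemma~\ref{lemma:DF}, in contrast to our main lemma, requires nothing deeper about stable rank than this one fact.
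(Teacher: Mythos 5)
Your argument is correct and is essentially identical to the paper's: both write the split-surjection hypothesis as $e(fg)=1_E$ for some $e\in E$ and section $g\in\Hom_S(N,M)$, invoke Dedekind-finiteness of $E$ (from $\sr(E)=1$) to get $(fg)e=1_E$, and read this as $f(ge)=1_E$ so that $ge$ is a section of $f$. The only difference is expository; the paper phrases the two sides via $E(fg)=E$ and $(fg)E=E$, but the underlying one-line Dedekind-finiteness step is the same.
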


\begin{proof}
	Suppose first that $Ef$ contains a split surjection. Then there is $g\in\Hom_S(N,M)$ with $E(fg)=E$.  Since $E$ is Dedekind-finite, $(fg)E=E$, so there is $h\in E$ with $f(gh)=1$.  Thus $f$ is split surjective, proving the forward implication.  The reverse implication is clear. 
\end{proof}

The proof of our main lemma begins with the next result, which establishes a fundamental connection between $\delta(F)$ and $\mu_E(F)$ for a given finitely generated left $E$-submodule $F$ of $\Hom_S(M,N)$.  The $\delta$ operator is defined in Section~\ref{sec:foundations}.

\begin{lemma}\label{lemma:spl-infty}
	Let $F$ be a finitely generated left $E$-submodule of $\Hom_S(M,N)$.  Then we have $\delta(F)=\infty$ if and only if $\delta(F)>\mu_E(F)$ if and only if $N=0$.
\end{lemma}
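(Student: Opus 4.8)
The plan is to dispose of the two easy equivalences at once and concentrate on the implication $\delta(F)>\mu_E(F)\Rightarrow N=0$. If $N=0$, then $N^{\oplus m}=0$ for every nonnegative integer $m$, so the zero map $M\to N^{\oplus m}$ is a split surjection and $\delta(F)=\infty$; and since $F$ is finitely generated, $\mu_E(F)$ is a finite nonnegative integer, so $\delta(F)=\infty$ trivially yields $\delta(F)>\mu_E(F)$. Hence only the third implication carries content.

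For that implication I would set $n:=\mu_E(F)$, fix generators $F=Ef_1+\cdots+Ef_n$ (with $F=0$ when $n=0$), and --- using $\delta(F)\geqslant n+1$ --- take a split surjection $\varphi=(\varphi_1,\ldots,\varphi_{n+1})^{\top}$ in $F^{\oplus(n+1)}\subseteq\Hom_S(M,N^{\oplus(n+1)})$ with a section $\psi=(\psi_1,\ldots,\psi_{n+1})\in\Hom_S(N^{\oplus(n+1)},M)$, so that $\varphi\psi=1_{\End_S(N^{\oplus(n+1)})}$. The key move is to factor $\varphi$ through $N^{\oplus n}$: writing $\varphi_i=\sum_{j=1}^{n}e_{ij}f_j$ with $e_{ij}\in E$, set $f:=(f_1,\ldots,f_n)^{\top}\in\Hom_S(M,N^{\oplus n})$ and $T:=(e_{ij})\in\mathbf{M}_{(n+1)\times n}(E)=\Hom_S(N^{\oplus n},N^{\oplus(n+1)})$, so that $\varphi=Tf$ and therefore $1_{\End_S(N^{\oplus(n+1)})}=\varphi\psi=T(f\psi)$. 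Thus $T$ is a split surjection, so $N^{\oplus(n+1)}$ is a direct summand of $N^{\oplus n}$ over $S$.

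It then remains to extract $N=0$ from this summand relation, which amounts to a direct-finiteness fact about $N^{\oplus n}$. For $n=0$ the relation reads ``$N$ is a direct summand of $0$'', so $N=0$ immediately. For $n\geqslant 1$, I would write $N^{\oplus n}\cong N^{\oplus(n+1)}\oplus C\cong N^{\oplus n}\oplus(N\oplus C)$ for an $S$-module $C$; this produces $\alpha,\beta\in\End_S(N^{\oplus n})=\mathbf{M}_n(E)$ with $\beta\alpha=1$ and $\coker\alpha\cong N\oplus C$. Since $\sr(E)=1$, the matrix ring $\mathbf{M}_n(E)$ also has stable rank $1$ by Vaserstein~\cite{Vas}, hence is Dedekind-finite by Kaplansky~\cite[Lemma~1.7]{Lam}; consequently $\alpha\beta=1$, the endomorphism $\alpha$ is an isomorphism, and $N\oplus C\cong\coker\alpha=0$, so $N=0$.

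I do not anticipate a genuine obstacle here: the whole argument is short once the factorization $\varphi=Tf$ is in hand. The one point demanding care is that the final step truly needs the Dedekind-finiteness of the matrix ring $\mathbf{M}_n(E)$ --- equivalently, the fact that a ring of stable rank $1$ is stably finite --- and not merely the Dedekind-finiteness of $E$ that powers Lemma~\ref{lemma:DF}; this is the classical input of Vaserstein invoked above.
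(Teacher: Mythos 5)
Your proof is correct and follows the paper's route: both arguments extract a split surjection $N^{\oplus n}\twoheadrightarrow N^{\oplus(n+1)}$ by writing the given split surjection in $F^{\oplus(n+1)}$ as $\varphi=Tf$ with $T\in\mathbf{M}_{(n+1)\times n}(E)$, and both close using $\sr(E)=1$. The only divergence is at the finish---the paper cancels $N$ iteratively via Evans's Cancellation Theorem to reach $N\oplus Q\cong 0$, while you pass through Dedekind-finiteness of $\mathbf{M}_n(E)$ (via Vaserstein's $\sr(\mathbf{M}_n(E))=\sr(E)$)---but these are two phrasings of the same stable-rank-one input.
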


\begin{proof}
	If $N=0$, then $\delta(F)=\infty>0=\mu_E(F)$.
	
	If $\delta(F)=\infty$, then $\delta(F)>\mu_E(F)$ since $F$ is finitely generated over $E$ by hypothesis.
	
	Suppose, by way of contradiction, that $\delta(F)>\mu_E(F)$ and $N\neq 0$.  Then $\mu_E(F)>0$.  Let $n:=\mu_E(F)$, and let $f_1,\ldots,f_n\in F$ be such that $F=Ef_1+\cdots+Ef_n$.  Since $\delta(F)>\mu_E(F)$, there is $A\in\mathbf{M}_{(n+1)\times n}(E)$ with $A(f_1,\ldots,f_n)^\top$ split surjective, so $A$ represents an $S$-linear split surjection from $N^{\oplus n}$ onto $N^{\oplus (n+1)}$.  As a result, there is a right $S$-module $Q$ with $N^{\oplus n}\cong N^{\oplus (n+1)}\oplus Q$.  Since $\sr(E)=1$, we can apply Evans's Cancellation Theorem~\cite[Theorem~2]{Eva} iteratively $n$ times to yield $N\oplus Q=0$.  Therefore, $N=0$, a contradiction.
\end{proof}

The next lemma reveals that a version of Gaussian elimination holds for matrices with entries in $E$.  We use this statement to certify Lemma~\ref{lemma:replacement} as well as two results (Lemmas~\ref{lemma:canc-delta-one} and~\ref{lemma:rows}) directly invoked in the proof of our main lemma.   

\begin{lemma}\label{lemma:identity}
Let $(g_1,\ldots,g_n)^\top\in\Hom_S(M,N^{\oplus n})$ for some integer $n\geqslant 2$, and suppose that $\delta(Eg_1+\cdots+Eg_n)\geqslant m$ for some $m\in\{1,\ldots,n\}$.  Then there exists $A\in\mathbf{M}_{m\times n}(E)$ with $A(g_1,\ldots,g_n)^\top\in\Hom_S(M,N^{\oplus m})$ split surjective and with each column of the $m\times m$ identity matrix $I_m$ occupying a predetermined column of $A$ of our choice.  From the special case in which the first $m$ columns of $A$ form $I_m$, we learn that there is a split surjection in the set $g_1+Eg_{m+1}+\cdots+Eg_n$.
\end{lemma}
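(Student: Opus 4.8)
The plan is to rephrase the hypothesis in matrix language and then perform a Gaussian‑type elimination. Unwinding the definition of $\delta$, the assumption $\delta(Eg_1+\cdots+Eg_n)\geqslant m$ says precisely that some $C\in\mathbf{M}_{m\times n}(E)$ makes $C(g_1,\ldots,g_n)^\top\in\Hom_S(M,N^{\oplus m})$ split surjective: each entry $\sum_jC_{ij}g_j$ lies in $Eg_1+\cdots+Eg_n$, so the whole column lies in $(Eg_1+\cdots+Eg_n)^{\oplus m}$. Choose a section $k\in\Hom_S(N^{\oplus m},M)$, write $k=(k_1,\ldots,k_m)$ with $k_l\in\Hom_S(N,M)$, and set $R:=(g_jk_l)_{j,l}\in\mathbf{M}_{n\times m}(E)$; then the splitting relation becomes $CR=I_m$. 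Permuting $g_1,\ldots,g_n$ affects neither $\delta$ nor the content of the conclusion, so I may assume the prescribed columns are $1,\ldots,m$; the task is then to find $A=(I_m\mid A_2)$ with $A_2\in\mathbf{M}_{m\times(n-m)}(E)$ and $(I_m\mid A_2)(g_1,\ldots,g_n)^\top$ split surjective — equivalently, a split surjection $M\to N^{\oplus m}$ whose $i$-th component lies in $g_i+Eg_{m+1}+\cdots+Eg_n$ for every $i$.

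I would induct on $n$ over all pairs with $m\leqslant n$. When $n=m$ there is no $A_2$, so I must check that $(g_1,\ldots,g_m)^\top$ is already split surjective. Here $C,R\in\mathbf{M}_m(E)=\End_S(N^{\oplus m})$ with $CR=I_m$; since $\sr(E)=1$, Evans's Cancellation Theorem shows $\End_S(N^{\oplus m})$ is Dedekind‑finite — if $uv=1$ there, then $u$ is split surjective, $N^{\oplus m}\cong N^{\oplus m}\oplus\ker u$, and cancelling $N$ a total of $m$ times forces $\ker u=0$, so $u$ is invertible and $vu=1$ — hence $RC=I_m$, $C\in\GL_m(E)$, and $(g_1,\ldots,g_m)^\top=C^{-1}\!\bigl(C(g_1,\ldots,g_m)^\top\bigr)$ is split surjective; one then takes $A=I_m$. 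The inductive step ($n>m$) rests on a column‑elimination claim: there are $b_1,\ldots,b_{n-1}\in E$ with $\delta\bigl(E(g_1+b_1g_n)+\cdots+E(g_{n-1}+b_{n-1}g_n)\bigr)\geqslant m$. Granting this, put $g_j':=g_j+b_jg_n$ and apply the inductive hypothesis to $g_1',\ldots,g_{n-1}'$, whose prescribed columns $1,\ldots,m$ lie in $\{1,\ldots,n-1\}$ because $n>m$; this yields $A'\in\mathbf{M}_{m\times(n-1)}(E)$ with $I_m$ in the prescribed columns and $A'(g_1',\ldots,g_{n-1}')^\top$ split surjective. Since $(g_1',\ldots,g_{n-1}')^\top=(I_{n-1}\mid b)(g_1,\ldots,g_n)^\top$ with $b=(b_1,\ldots,b_{n-1})^\top$, the matrix $A:=(A'\mid A'b)$ has the same first $n-1$ columns as $A'$ and satisfies $A(g_1,\ldots,g_n)^\top=A'(g_1',\ldots,g_{n-1}')^\top$, which completes the induction.

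The column‑elimination claim is where $\sr(E)=1$ does the real work, and I expect it to be the crux of the proof. For $m=1$ it is immediate from the definition of stable rank: a split surjection $\sum_jc_jg_j$ with section $k$ gives $a_j:=g_jk\in E$ with $\sum_jEa_j=E$, and since $\sr(E)=1$ and $n\geqslant2$ there are $b_1,\ldots,b_{n-1}$ with $\sum_{j<n}E(a_j+b_ja_n)=E$; as $a_j+b_ja_n=(g_j+b_jg_n)k$, the module $\sum_{j<n}E(g_j+b_jg_n)$ contains a split surjection, so its $\delta$ is at least $1$. For general $m$ I would carry out the same reduction inside $\mathbf{M}_m(E)=\End_S(N^{\oplus m})$, which also has stable rank $1$, using that $(Eg_1+\cdots+Eg_n)^{\oplus m}$ is generated over $\mathbf{M}_m(E)$ by the columns $(g_j,0,\ldots,0)^\top$ and that $\delta\geqslant m$ computed over $(E,N)$ is $\delta\geqslant 1$ computed over $(\mathbf{M}_m(E),N^{\oplus m})$; a short bookkeeping argument, invoking $\sr(E)=1$ once more to rearrange coordinates, then restores the scalar form above. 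Finally, the last sentence of the lemma is automatic once the first $m$ columns of $A$ form $I_m$: then the $i$-th component of $A(g_1,\ldots,g_n)^\top$ lies in $g_i+Eg_{m+1}+\cdots+Eg_n$, and if $(k_1',\ldots,k_m')$ is a section of the split surjection $A(g_1,\ldots,g_n)^\top$, comparing $(1,1)$-entries in $\bigl(A(g_1,\ldots,g_n)^\top\bigr)(k_1',\ldots,k_m')=I_m$ gives $\bigl(A(g_1,\ldots,g_n)^\top\bigr)_1\,k_1'=1_E$, so that component is itself a split surjection lying in $g_1+Eg_{m+1}+\cdots+Eg_n$.
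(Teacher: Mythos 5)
Your overall scaffold — induct on $n$ with the base case $n=m$ handled by Dedekind-finiteness of $\mathbf{M}_m(E)$, and an inductive step that folds $g_n$ into $g_1,\ldots,g_{n-1}$ — is a genuinely different organization from the paper's, which fixes $n$ and inducts on the number $k$ of identity columns already placed, using a section of $B(g_1,\ldots,g_n)^\top$ and $\sr(E)=1$ on the $(1,1)$-entry to install a $1$ in the desired spot, then conjugating by a matrix in $\GL_m(E)$ to clear below the pivot while preserving previously placed identity columns. The permutation reduction, the base case, the $m=1$ elimination, the bootstrap from $(m,n-1)$ to $(m,n)$, and the final sentence are all fine.

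The gap is in your column-elimination claim for $m\geqslant 2$. Working over $\mathbf{M}_m(E)$ with generators $G_j:=(g_j,0,\ldots,0)^\top$, stable rank $1$ of $\mathbf{M}_m(E)$ does produce $B_1,\ldots,B_{n-1}\in\mathbf{M}_m(E)$ with a split surjection in $\sum_{j<n}\mathbf{M}_m(E)(G_j+B_jG_n)$. But writing the first column of $B_j$ as $(b_j,c^{(j)}_2,\ldots,c^{(j)}_m)^\top$, one has
\[
G_j+B_jG_n=\bigl(g_j+b_jg_n,\ c^{(j)}_2g_n,\ \ldots,\ c^{(j)}_mg_n\bigr)^\top,
\]
so below the top entry the new generators carry multiples of $g_n$, not members of $\sum_{j<n}E(g_j+b_jg_n)$. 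Consequently $\sum_{j<n}\mathbf{M}_m(E)(G_j+B_jG_n)$ is \emph{not} contained in $\bigl(\sum_{j<n}E(g_j+b_jg_n)\bigr)^{\oplus m}$, and the existence of a split surjection there does not give $\delta\bigl(\sum_{j<n}E(g_j+b_jg_n)\bigr)\geqslant m$. There is no single $Q\in\GL_m(E)$ that simultaneously normalizes all the $B_j$ into scalar form, and you cannot simply pass to $b_jI_m$ in place of $B_j$ since stable rank only guarantees arbitrary $B_j$. The ``short bookkeeping argument, invoking $\sr(E)=1$ once more'' is therefore doing real work that is left unjustified; as stated, the inductive step does not go through. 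Note also that the column-elimination claim, if true, is essentially equivalent to the lemma itself (take $A=(I_m\mid A_2)$ and read off $b_j:=a_{j,n}$), so it cannot be leaned on without a self-contained proof — which is exactly what the paper's more hands-on row-reduction supplies.
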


\begin{proof}
	Assume inductively that, for some $k\in\{0,\ldots,m-1\}$, there is $B:=(b_{i,j})\in\mathbf{M}_{m\times n}(E)$ with $B(g_1,\ldots,g_n)^\top$ split surjective and with $k$ columns of $I_m$ appearing in $B$ at desired locations.  To simplify notation, assume also that $(1,0,\ldots,0)^\top$ is not the leftmost column of $B$ but that we wish for this to be the leftmost column of $A$.  (The general case is similar in spirit but more cumbersome in notation.)  Let $(z_1,\ldots,z_m)\in\Hom_S(N^{\oplus m},M)$ be a section of $B(g_1,\ldots,g_n)^\top$.  Since
	\[
	b_{1,1}(g_1z_1)+\sum_{j=2}^n b_{1,j}(g_jz_1)=1
	\]
	and since $\sr(E)=1$, there are $d,u\in E$ with $u$ a unit such that
	\[
	g_1(z_1u)+\sum_{j=2}^n (db_{1,j})g_j(z_1u)=1.
	\]
	So we get
	\[
	\begin{pmatrix}
	1 & db_{1,2} & \cdots & db_{1,n} \\
	b_{2,1} & b_{2,2} & \cdots & b_{2,n} \\
	\vdots & \vdots & & \vdots \\
	b_{m,1} & b_{m,2} & \cdots & b_{m,n} \\
	\end{pmatrix}
	\begin{pmatrix}
	g_1 \\
	\vdots \\
	g_n \\
	\end{pmatrix}
	\begin{pmatrix}
	z_1u & z_2 & \cdots & z_m \\
	\end{pmatrix}
	=\begin{pmatrix}
	1 	& * 			& \cdots 	& \cdots 	& * \\
	0 			& 1 	& 0 		& \cdots 	& 0 \\
	\vdots 		& 0 			& \ddots 	& \ddots 	& \vdots \\
	\vdots 		& \vdots 		& \ddots 	& \ddots 	& 0 \\
	0 			& 0 			& \cdots	& 0			& 1 \\
	\end{pmatrix}.
	\]
	We can now right-multiply by the inverse of the right side of the equation to produce $I_m$, and we can then conjugate by a matrix in $\GL_m(E)$ to clear $b_{2,1},\ldots,b_{m,1}$ while fixing $I_m$ on the right side of the equation.  This process yields a matrix
	\[
	C
	:=\begin{pmatrix}
	1 & db_{1,2} & \cdots & db_{1,n} \\
	0 & b_{2,2}-b_{2,1}db_{1,2} & \cdots & b_{2,n}-b_{2,1}db_{1,n} \\
	\vdots & \vdots & & \vdots \\
	0 & b_{m,2}-b_{m,1}db_{1,2} & \cdots & b_{m,n}-b_{m,1}db_{1,n} \\
	\end{pmatrix}
	\]
	such that $C(g_1,\ldots,g_n)^\top$ is split surjective.  If $k=0$, then our inductive step is complete; otherwise, let $p\in\{2,\ldots,m\}$ and $q\in\{2,\ldots,n\}$ be such that the $p$th column of $I_m$ is the $q$th column of~$B$.  Then $b_{1,q}=0$, and so inspection reveals that the $q$th column of $C$ is the $q$th column of~$B$.  Therefore, at least $k+1$ columns of $I_m$ appear in $C$ at desired positions.  This completes our inductive step and our proof of the first statement of the lemma.
	
	The second statement of the lemma is an easy corollary of the first.
\end{proof}

The following lemma helps us compare lower bounds for the $\delta$-values of certain left $E$-submodules of $\Hom_S(M,N)$.  We appeal to this result explicitly in the proof of our main lemma.

\begin{lemma}\label{lemma:canc-delta-one}
	Choose $(f_1,\ldots,f_n)^\top\in\Hom_S(M,N^{\oplus n})$ for some integer $n\geqslant 2$, and suppose that $\delta(Ef_1+\cdots+Ef_n)\geqslant m$ for some $m\in\{2,\ldots,n\}$.  Let $(g_1,\ldots,g_n)^{\top}:=U(f_1,\ldots,f_n)^\top$ for some $U\in\mathbf{GL}_n(E)$, and let $k\in\{1,\ldots,m-1\}$.  Then $\delta(Eg_1+\cdots+Eg_{n-k})\geqslant m-k$.
\end{lemma}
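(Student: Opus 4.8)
The plan is to first discard the change of basis $U$, then prove the remaining assertion by induction on $k$, with the base case $k=1$ carrying all the content.

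Since $U\in\GL_n(E)$, each $g_i$ is a left $E$-linear combination of $f_1,\dots,f_n$, and, applying $U^{-1}$, each $f_i$ is a left $E$-linear combination of $g_1,\dots,g_n$; hence $Eg_1+\cdots+Eg_n=Ef_1+\cdots+Ef_n$ as left $E$-submodules of $\Hom_S(M,N)$. Because $\delta(-)$ depends only on the underlying left $E$-module, it follows that $\delta(Eg_1+\cdots+Eg_n)=\delta(Ef_1+\cdots+Ef_n)\geqslant m$. Thus $U$ and the $f_i$ play no further role, and it suffices to show the following: if $(g_1,\dots,g_n)^\top\in\Hom_S(M,N^{\oplus n})$ with $\delta(Eg_1+\cdots+Eg_n)\geqslant m$ for some $m\in\{2,\dots,n\}$, then $\delta(Eg_1+\cdots+Eg_{n-k})\geqslant m-k$ for every $k\in\{1,\dots,m-1\}$. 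I would also record the elementary fact, implicit in the definition of $\delta$, that for a nonnegative integer $\ell$ one has $\delta(F)\geqslant\ell$ if and only if $F^{\oplus\ell}$ harbors a split surjection, the nontrivial direction following by deleting coordinates of a split surjection together with the matching coordinates of a section.

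I would then induct on $k$. For $k\geqslant2$ the step is immediate: the case $k=1$ applied to $g_1,\dots,g_n$ with bound $m$ gives $\delta(Eg_1+\cdots+Eg_{n-1})\geqslant m-1$, and since $m-1\geqslant k\geqslant2$ the induction hypothesis applies to $g_1,\dots,g_{n-1}$ with bound $m-1$ and with $k-1$ generators dropped, yielding $\delta(Eg_1+\cdots+Eg_{n-k})\geqslant m-k$. So everything comes down to $k=1$. Here I would invoke Lemma~\ref{lemma:identity} (whose hypotheses hold since $n\geqslant m\geqslant2$ and $m\in\{1,\dots,n\}$): there is $A\in\mathbf{M}_{m\times n}(E)$ with $A(g_1,\dots,g_n)^\top$ split surjective and with the $m$th column of $I_m$ occupying the last column of $A$, the remaining columns of $I_m$ being placed arbitrarily among the other columns. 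The point of this placement is that the first $m-1$ entries of the last column of $A$ then vanish, so the first $m-1$ coordinates $h_1,\dots,h_{m-1}$ of $A(g_1,\dots,g_n)^\top$ involve only $g_1,\dots,g_{n-1}$; that is, $h_i\in Eg_1+\cdots+Eg_{n-1}$ for $i\in\{1,\dots,m-1\}$. Choosing a section $(z_1,\dots,z_m)$ of $A(g_1,\dots,g_n)^\top$, the associated matrix identity over $E$ reads $(h_iz_j)_{1\leqslant i,j\leqslant m}=I_m$, and restricting to $i,j\leqslant m-1$ shows that $(z_1,\dots,z_{m-1})$ is a section of $(h_1,\dots,h_{m-1})^\top$. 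Hence $(h_1,\dots,h_{m-1})^\top$ is a split surjection lying in $(Eg_1+\cdots+Eg_{n-1})^{\oplus(m-1)}$, so $\delta(Eg_1+\cdots+Eg_{n-1})\geqslant m-1$, completing the base case.

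The only real obstacle is the base case, and within it the sole idea is the choice of where to send the columns of $I_m$ when applying Lemma~\ref{lemma:identity}: routing the $m$th one into the last column is exactly what deletes $g_n$ from the first $m-1$ rows without disturbing split surjectivity. The Gaussian elimination underlying that maneuver is already packaged inside Lemma~\ref{lemma:identity} (and rests on $\sr(E)=1$); the restriction-of-sections computation and the induction on $k$ are routine.
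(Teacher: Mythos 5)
Your argument is correct and is essentially the paper's own: both invoke Lemma~\ref{lemma:identity} with a strategic placement of the columns of $I_m$ so that the generator(s) to be dropped vanish from the relevant rows of $A$, then truncate the split surjection together with its section. The only cosmetic differences are that you settle $k=1$ and then induct on $k$, where the paper's choice of column placement handles all $k$ in a single step, and that you dispose of $U$ via the submodule equality $Eg_1+\cdots+Eg_n=Ef_1+\cdots+Ef_n$, where the paper instead observes that $CU^{-1}(g_1,\ldots,g_n)^\top=C(f_1,\ldots,f_n)^\top$ is still split surjective.
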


\begin{proof}	
	Since $\delta(Ef_1+\cdots+Ef_n)\geqslant m$, there is  $C\in\mathbf{M}_{m\times n}(E)$ with $C(f_1,\ldots,f_n)^\top$ split surjective.	Hence $CU^{-1}(g_1,\ldots,g_n)^\top=C(f_1,\ldots,f_n)^\top$ is split surjective, and so we have $\delta(Eg_1+\cdots+Eg_n)\geqslant m$.  Now, by Lemma~\ref{lemma:identity}, there is
	\[
	A:=
	\setcounter{MaxMatrixCols}{20}
	\begin{pmatrix}
	a_{1,1} & \cdots & a_{1,n-m} & 1 & 0 & \cdots & 0 & 0 &\cdots&\cdots&0\\
	\vdots & & \vdots & 0 & \ddots & \ddots & \vdots & \vdots&&&\vdots \\
	\vdots & & \vdots & \vdots & \ddots & \ddots & 0 & \vdots&&&\vdots \\
	a_{m-k,1} & \cdots & a_{m-k,n-m} & 0 & \cdots & 0 & 1 & 0&\cdots&\cdots&0 \\
	* & \cdots & * & 0 & \cdots & \cdots & 0 &1&0&\cdots&0\\
	\vdots & & \vdots & \vdots & & & \vdots & 0&\ddots&\ddots&\vdots\\
	\vdots & & \vdots & \vdots & & & \vdots & \vdots&\ddots&\ddots&0 \\
	* & \cdots & * & 0 & \cdots & \cdots & 0 & 0&\cdots&0&1 \\
	\end{pmatrix}\in\mathbf{M}_{m\times n}(E)
	\]
	with $A(g_1,\ldots,g_n)^\top$ split surjective.  Define $B$ as the top left $(m-k)\times (n-k)$ submatrix of~$A$:
	\[
	B:=
	\begin{pmatrix}
	a_{1,1} & \cdots & a_{1,n-m} & 1 & 0 & \cdots & 0 \\
	\vdots & & \vdots & 0 & \ddots & \ddots & \vdots \\
	\vdots & & \vdots & \vdots & \ddots & \ddots & 0 \\
	a_{m-k,1} & \cdots & a_{m-k,n-m} & 0 & \cdots & 0 & 1 \\
	\end{pmatrix}\in\mathbf{M}_{(m-k)\times (n-k)}(E).
	\]
	Since the top right $(m-k)\times k$ submatrix of $A$ is zero, $B(g_1,\ldots,g_{n-k})^\top$ is split surjective.  It follows that $\delta(Eg_1+\cdots+Eg_{n-k})\geqslant m-k$.
\end{proof}

The only purpose of our next observation is to assist us in proving Lemma~\ref{lemma:(f+edsg)z}, where we construct a split surjection in $\Hom_S(M,N)$ exhibiting a special form.

\begin{lemma}\label{lemma:special-section}
	Let $(e,f)$ be a split surjection in $\Hom_S(N\oplus M,N)$, and suppose that there is a split surjection $h$ in $\Hom_S(M,N)$.  Then there is a map $z$ in $\Hom_S(N,M)$ such that
	\[
	eE+fzE=E=Ehz.
	\]
\end{lemma}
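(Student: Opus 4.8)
We are given a split surjection $(e,f) \in \Hom_S(N \oplus M, N)$, so there is a section $(s,t)^\top \in \Hom_S(N, N \oplus M)$ with $es + ft = 1_E$, and we are given a split surjection $h \in \Hom_S(M,N)$ with section $h' \in \Hom_S(N,M)$, so $hh' = 1_E$. We must produce $z \in \Hom_S(N,M)$ with $eE + fzE = E$ and $Ehz = E$. The second condition says $hz$ should be a unit of $E$; the natural first guess $z = h'$ gives $hz = 1_E$ but says nothing about $e + fz$. The idea is to correct $h'$ by something in the kernel direction so that the first condition is also arranged, while not disturbing the fact that $h$ times the correction stays a unit.

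**Key steps.** First I would use the stable rank $1$ hypothesis on $E$ together with the relation $es + ft = 1$. Since $E(e \text{-part}) + E(f \text{-part})$... more precisely, from $es + f(th^{-1}...)$—wait, let me set it up cleanly: I want to produce $z$ of the form $z = h' + (\text{something})$. Consider the equation $e \cdot s + f \cdot t = 1$ in $E$. Because $\sr(E) = 1$, there is $d \in E$ and a unit $u \in E$ such that $(e + f \cdot (td^{?}))$... Concretely, apply the stable-rank-$1$ property to the unimodular pair: from $E \cdot s + E \cdot(\text{stuff}) = E$ we can add a multiple of one generator to the other. The cleanest route: since $es + ft = 1$, the element $e$ together with $f$ generate enough; I would find $w \in \Hom_S(N,M)$ such that $e + fw$ is a unit of $E$ (this is exactly a stable-range-type cancellation: $e \cdot 1 + f \cdot 0$ is "almost unimodular" once we know $es + ft = 1$, and stable rank $1$ lets us replace it by a unit after adding $f \cdot w$ for suitable $w$). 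Then set $z := w$ if additionally $hw$ is a unit; in general one corrects by a factor. So the second move is to arrange $hz$ a unit: replace $z = w$ by $z = w + h' v$ for a suitable $v \in E$, noting that multiplying the target equation $e + fz$ by a unit on the right keeps it a unit, and $h(w + h'v) = hw + v$, which we can force to be a unit by choosing $v$ appropriately (e.g.\ $v = 1 - hw$, giving $h z = 1$), provided this does not destroy $e + fz$ being a unit—here we again invoke $\sr(E) = 1$ to absorb the perturbation $fh'v$ into the already-unimodular situation. Iterating these two adjustments (each legal because $\sr(E)=1$ permits adding multiples of one column to another in a unimodular relation, as in Lemma~\ref{lemma:identity}) yields a single $z$ with both $e + fz$ a unit and $hz$ a unit, whence $eE + fzE \supseteq (e+fz)E = E$ and $Ehz = E$.

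**The main obstacle.** The delicate point is making the two corrections \emph{simultaneously}: fixing $hz$ to be a unit via $z \mapsto z + h'v$ changes $e + fz$ by $fh'v$, and conversely fixing $e+fz$ changes $hz$. The right way to handle this is to not treat them sequentially but to solve for $z$ in one stable-range step applied to the $2 \times 2$ system built from the rows $(e, f)$ and $(h, \text{something})$—essentially running the Gaussian-elimination argument of Lemma~\ref{lemma:identity} on the $2 \times n$ matrix whose rows encode "$e + fz$ unimodular" and "$hz$ unimodular", using that $(e,f)$ is split surjective and $h$ is split surjective to get the needed unimodularity of the combined data, and then using $\sr(E) = 1$ once to collapse everything to a common section $z$. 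I expect writing down that combined matrix and checking its split surjectivity from the two given split surjections—rather than the bookkeeping of the individual unit corrections—to be the real content of the proof.
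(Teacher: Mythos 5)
Your proposal correctly isolates the obstacle but does not overcome it: the ``combined $2\times 2$ system'' is never written down, and the ``iterating these two adjustments'' has no reason to terminate. After you pick $w$ with $e+fw$ a unit (this much is fine, via $ep+fq=1 \Rightarrow eE+(fq)E=E$, a right-side stable-range step, and Dedekind-finiteness), correcting to $z=w+h'v$ to get $hz=1$ shifts $e+fz$ by $fh'v$, and there is no mechanism forcing the error back into the unit group --- you would simply oscillate between the two constraints. The stacking idea is also underdetermined: $(e,f)$ maps $N\oplus M\to N$ while $h$ maps $M\to N$, so the only natural candidate is $\begin{pmatrix} e & f \\ 0 & h\end{pmatrix}:N\oplus M\to N^{\oplus 2}$, and there is no reason this map is split surjective from the two given split surjections alone. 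You are also aiming for more than is required: the lemma asks for $eE+fzE=E$, not for $e+fz$ to be a unit; the weaker target is what makes the argument possible.

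The paper's proof handles both conditions in a single stable-rank step, applied to a unimodular pair you never consider. Fix sections $p,q,r$ with $ep+fq=1$ and $hr=1$. The trivial identity $hq\cdot fr+(1-hqfr)\cdot 1=1$ shows $E(hq)+E(1-hqfr)=E$, so $\sr(E)=1$ produces $d\in E$ making $u:=hq+(1-hqfr)d$ a unit. Now set $z:=q(1-frd)+rd$ --- a tuned combination of the two \emph{given} sections, not a perturbation of a first-constructed $w$. Then $hz=hq(1-frd)+hrd=hq+(1-hqfr)d=u$, and with $y:=p(1-frd)$ one computes $ey+fz=(ep+fq)(1-frd)+frd=1$, which yields $eE+fzE=E$. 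The idea you were missing is to parameterize $z$ over $(q,r)$ from the outset and to locate the unimodular relation on which the stable-rank hypothesis should act: it is $(hq,\,1-hqfr)$, a pair built out of \emph{both} sections simultaneously, not $(e,f)$ or any stacked matrix.
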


\begin{proof}
	Let $p\in E$ and $q,r\in\Hom_S(N,M)$ be such that
	\[
	ep+fq=1=hr.
	\]
	Since
	\[
	hqfr+(1-hqfr)=1,
	\]
	we have
	\[
	hqE+(1-hqfr)E=E.
	\]
	Since $\sr(E)=1$, there is $d\in E$ such that
	\[
	\begin{array}{rcl}
	u&:=&hq+(1-hqfr)d\\
	\end{array}
	\]
	is a unit of $E$.  Let
	\[
	\begin{array}{rcl}
	y&:=&p(1-frd),\\
	z&:=&q(1-frd)+rd.\\
	\end{array}
	\]
	Then
	\[
	\begin{array}{rcl}
	ey+fz&=&e[p(1-frd)]+f[q(1-frd)+rd]\\
	&=&(ep+fq)(1-frd)+frd\\
	&=&(1-frd)+frd\\
	&=&1\\
	\end{array}
	\]
	and
	\[
	\begin{array}{rcl}
	hz&=&h[q(1-frd)+rd]\\
	&=&hq(1-frd)+hrd\\	
	&=&hq(1-frd)+d\\	
	&=&hq+(1-hqfr)d\\
	&=&u\\
	\end{array}
	\]
	are units of $E$.
\end{proof}

We conclude this section with a statement that allows us, during our proof of Lemma~\ref{lemma:rows}, to initiate a row reduction process stronger than the one that yields  Lemma~\ref{lemma:identity}.

\begin{lemma}[{cf.~Bass~\cite[``technical little argument" preceding Corollary~6.6]{Bas}}]\label{lemma:replacement}
	Choose $g_1,\ldots,g_n\in\Hom_S(M,N)$ for some integer $n\geqslant 2$, and suppose that $\delta(Eg_1+\cdots+Eg_n)\geqslant m$ for some $m\in\{1,\ldots,n-1\}$ so that, by the second statement of Lemma~\ref{lemma:identity}, there is a split surjection $y\in g_1+Eg_{m+1}+\cdots+Eg_n$.  Then, for every such $y$, there is a split surjection $(y_1,\ldots,y_m)^\top\in\Hom_S(M,N^{\oplus m})$ with $y_1=y$ and with $y_i\in g_i+Eg_{m+1}+\cdots+Eg_n$ for every $i\in\{2,\ldots,m\}$.
\end{lemma}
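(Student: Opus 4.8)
The plan is to pass from the list $g_1,\dots,g_n$ to a new list obtained by ``orthogonalizing against $y$,'' then to combine Lemma~\ref{lemma:canc-delta-one}, the first statement of Lemma~\ref{lemma:identity}, and two elementary operations over $E$. The case $m=1$ is trivial, since then $(y_1)^\top=(y)$ and $y$ is a split surjection by hypothesis, so I assume $m\geqslant 2$ (hence $n\geqslant 3$). Fix a section $r\in\Hom_S(N,M)$ of $y$, write $y=g_1+\sum_{j>m}c_jg_j$ with $c_j\in E$, and for $j=1,\dots,n$ set $\mu_j:=g_jr\in E$ and $\tilde\gamma_j:=g_j-\mu_jy\in\Hom_S(M,N)$. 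The decisive feature is that each $\tilde\gamma_j$ kills the section: $\tilde\gamma_jr=g_jr-\mu_j(yr)=\mu_j-\mu_j=0$. Since $yr=1$, composing the relation $y=g_1+\sum_{j>m}c_jg_j$ with $r$ on the right gives $\mu_1+\sum_{j>m}c_j\mu_j=1$, whence $\tilde\gamma_1=(1-\mu_1)y-\sum_{j>m}c_jg_j=-\sum_{j>m}c_j\tilde\gamma_j$; together with $g_j=\mu_jy+\tilde\gamma_j$ and $y\in Eg_1+\cdots+Eg_n$ this yields the equality of left $E$-submodules $Ey+E\tilde\gamma_2+\cdots+E\tilde\gamma_n=Eg_1+\cdots+Eg_n$, so $\delta(Ey+E\tilde\gamma_2+\cdots+E\tilde\gamma_n)\geqslant m$.

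\textbf{Producing the shorter split surjection.}
Next I would apply Lemma~\ref{lemma:canc-delta-one} to the column $(y,\tilde\gamma_2,\dots,\tilde\gamma_n)^\top$ with $U\in\GL_n(E)$ the cyclic permutation matrix sending it to $(\tilde\gamma_2,\dots,\tilde\gamma_n,y)^\top$ and with $k=1$, obtaining $\delta(E\tilde\gamma_2+\cdots+E\tilde\gamma_n)\geqslant m-1$. Feeding this bound into the first statement of Lemma~\ref{lemma:identity}, applied to $(\tilde\gamma_2,\dots,\tilde\gamma_n)^\top$ with parameter $m-1$ and with $I_{m-1}$ placed in the first $m-1$ columns, produces a split surjection $(\tilde y_2,\dots,\tilde y_m)^\top\colon M\to N^{\oplus(m-1)}$ in which $\tilde y_i\in\tilde\gamma_i+E\tilde\gamma_{m+1}+\cdots+E\tilde\gamma_n$ for each $i$; since every $\tilde\gamma_j$ kills $r$, so does every $\tilde y_i$.

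\textbf{Re-gluing $y$ and fixing the rows.}
The heart of the argument is putting $y$ back on as a first row: I claim $(y,\tilde y_2,\dots,\tilde y_m)^\top\colon M\to N^{\oplus m}$ is a split surjection. Indeed, if $\sigma\colon N^{\oplus(m-1)}\to M$ is a section of $(\tilde y_2,\dots,\tilde y_m)^\top$, then $\sigma':=\sigma-r\circ(y\circ\sigma)$ is again a section of $(\tilde y_2,\dots,\tilde y_m)^\top$ (because $\tilde y_ir=0$) and satisfies $y\circ\sigma'=0$ (because $yr=1$), so $(r\mid\sigma')\colon N\oplus N^{\oplus(m-1)}\to M$ is a section of $(y,\tilde y_2,\dots,\tilde y_m)^\top$. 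Finally, substituting $\tilde\gamma_k=g_k-\mu_ky$ into the formulas for the $\tilde y_i$ rewrites $\tilde y_i=g_i+\sum_{j>m}d_{ij}g_j-\theta_iy$ for suitable $d_{ij},\theta_i\in E$; left-multiplying $(y,\tilde y_2,\dots,\tilde y_m)^\top$ by the unipotent matrix in $\GL_m(E)$ that adds $\theta_i$ times the first row to the $i$-th row, for $i=2,\dots,m$, preserves split surjectivity, leaves the first entry equal to $y$, and converts the $i$-th entry into $y_i:=g_i+\sum_{j>m}d_{ij}g_j\in g_i+Eg_{m+1}+\cdots+Eg_n$. The resulting column $(y_1,\dots,y_m)^\top$ is the one sought.

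\textbf{Main obstacle.}
The step I expect to require the most thought is the opening move of replacing $g_j$ by $\tilde\gamma_j=g_j-(g_jr)y$: this is the modification that simultaneously keeps the relevant left $E$-module $Ey+E\tilde\gamma_2+\cdots+E\tilde\gamma_n$ equal to $Eg_1+\cdots+Eg_n$ and forces every $\tilde\gamma_j$ --- hence every $\tilde y_i$ assembled from them --- to vanish on $r(N)$, and it is exactly the vanishing on $r(N)$ that makes the re-gluing of $y$ succeed (it is what lets one correct an arbitrary section $\sigma$ to a section $\sigma'$ annihilating $y$). Once this reduction is in place, the rest is routine bookkeeping with Lemmas~\ref{lemma:canc-delta-one} and~\ref{lemma:identity} and with elementary row and column operations over $E$.
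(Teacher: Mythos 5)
Your proof is correct, and it follows a genuinely different route from the paper's. The paper first produces a split surjection $(x_1,\ldots,x_m)^\top$ via Lemma~\ref{lemma:identity}, then \emph{directly} uses $\sr(E)=1$ and sections $p$ of $x_1$ and $q$ of $y$ to build three explicit units $u,v,w$ of $\End_S(M)$ with $x_1uvw=y$ and $x_iuvw\in x_i+Ex_1+Ey$, before clearing the first column with an elementary matrix on the left. You instead fix a single section $r$ of $y$ and ``Gram--Schmidt'' against it, passing to $\tilde\gamma_j:=g_j-(g_jr)y$; the key point that $\tilde\gamma_j r=0$ lets you discard $y$ entirely (via Lemma~\ref{lemma:canc-delta-one}), solve the split-surjectivity problem in rank $m-1$ (via Lemma~\ref{lemma:identity}), and then re-glue $y$ by correcting an arbitrary section $\sigma$ to one orthogonal to $r(N)$ --- a clean linear-algebra move that works over any ring, with $\sr(E)=1$ entering only through the cited lemmas rather than through a bespoke unit computation in $\End_S(M)$. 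Both arguments need the same two preparatory lemmas, but your reduce-and-reglue structure is conceptually tidier and isolates exactly where the extra hypothesis is consumed, whereas the paper's ``technical little argument'' buys a shorter computation at the cost of the somewhat opaque $u,v,w$ manipulation. One small bookkeeping point worth noting when you write this up: the passage from $(g_1,\ldots,g_n)^\top$ to $(\tilde\gamma_2,\ldots,\tilde\gamma_n,y)^\top$ factors as a product of two unipotent matrices followed by a permutation, so the $U\in\GL_n(E)$ you feed to Lemma~\ref{lemma:canc-delta-one} is genuinely invertible; you should state this explicitly rather than leave it implicit.
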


\begin{proof}
	By Lemma~\ref{lemma:identity}, there is
	\[
	A:=\begin{pmatrix}
	1  				& 0	& \cdots	& 0 		& *&\cdots 	&  *	\\
	0  			& \ddots	& \ddots 	& \vdots 	& \vdots 		&  			& \vdots	\\
	\vdots  		& \ddots	& \ddots	& 0		 	& \vdots 		& 		 	&  \vdots	\\
	0	 				& \cdots 	& 0 		& 1 		& * 	& \cdots	&  * 	\\
	\end{pmatrix}\in\mathbf{M}_{m\times n}(E)
	\]
	with $A(g_1,\ldots,g_n)^\top$ split surjective.  Let $(x_1,\ldots,x_m)^{\top}:=A(g_1,\ldots,g_n)^{\top}$ so that the map $x_i$ belongs to $g_i+Eg_{m+1}+\cdots+Eg_n$ for every $i\in\{1,\ldots,m\}$, and let $p$ and $q$ be sections of $x:=x_1$ and $y$, respectively.  Then
	\[
	xqyp+x(1_M-qy)p=1_N,
	\]
	and so
	\[
	xqE+x(1_M-qy)pE=E.
	\]
	Since $\sr(E)=1$, there is $d\in E$ such that
	\[
	t:=xq+x(1_M-qy)pd
	\]
	is a unit of~$E$.  Since $yq=1_N$, we have $y(1_M-qy)=0_{\Hom_S(M,N)}$ and $(1_M-qy)q=0_{\Hom_S(N,M)}$.  Hence
	\[
	\begin{tabular}{lcl}
	$u$&:=&$1_M+(1_M-qy)pdy$,\\
	$v$&:=&$(1_M-qy)+qt^{-1}y$,\\
	$w$&:=&$1_M-qx(1_M-qy)$\\
	\end{tabular}
	\]
	are units of $\End_S(M)$ with
	\[
	\begin{tabular}{lcl}
	$u^{-1}$&=&$1_M-(1_M-qy)pdy$,\\
	$v^{-1}$&=&$(1_M-qy)+qty$,\\
	$w^{-1}$&=&$1_M+qx(1_M-qy)$.\\
	\end{tabular}
	\]
	By direct computation, $xuvw=y$.  Furthermore, for every $i\in\{2,\ldots,m\}$, inspection reveals that
	\[
	x_i uvw\in x_i+Ex+Ey\subseteq g_i+Eg_1+Eg_{m+1}+\cdots+Eg_n.
	\]
	Hence there is
	\[
	B:=
	\begin{pmatrix}
	1		& 0		 			& \cdots 	& \cdots 	& 0 		& * 	& \cdots 	& *		\\
	*		& 1 				& 0	& \cdots	& 0 		& *	& \cdots	& *		\\
	\vdots	& 0  			& \ddots	& \ddots 	& \vdots 	& \vdots 		&  			& \vdots	\\
	\vdots	& \vdots  		& \ddots	& \ddots	& 0		 	& \vdots 		& 		 	& \vdots		\\
	* 		& 0	 				& \cdots 	& 0 		& 1 		& * 	& \cdots	& * \\
	\end{pmatrix}\in\mathbf{M}_{m \times n}(E)
	\]
	with $B(g_1,\ldots,g_n)^{\top}=(x_1,\ldots,x_m)^{\top}uvw$
	split surjective.  We now left-multiply by a matrix in $\mathbf{GL}_m(E)$ to clear every entry in the first column of $B$ except for the first entry.  This produces a matrix $C$ such that $(y_1,\ldots,y_m)^\top :=C(g_1,\ldots,g_n)^{\top}$
	is split surjective with $y_1=y$ and with $y_i\in g_i+Eg_{m+1}+\cdots+Eg_n$ for every $i\in\{2,\ldots,m\}$.
\end{proof}

In the next section, we continue working toward a proof of our main lemma by specializing to the case in which $E/\Jac(E)$ is a unit-regular ring.

\section{The case of a module with an endomorphism ring that is unit-regular modulo its Jacobson radical}\label{sec:unit-regular}

In the present section of the paper, $M$ and $N$ stand for right modules over a ring $S$ with $E:=\End_S(N)$ such that $E/\Jac(E)$ is a unit-regular ring.  We direct the reader to Section~\ref{sec:foundations}, Subsections ``Associative rings" and ``Stable rank", for background on unit-regular rings and their relatives.

Our primary goal here is to certify Lemma~\ref{lemma:rows}, the only statement from this section directly cited in the proof of our main lemma (Lemma~\ref{lemma:main}); the two other lemmas in this section (Lemmas~\ref{lemma:b+adsc} and~\ref{lemma:(f+edsg)z}) serve only to help us establish Lemma~\ref{lemma:rows}.  For the latter result, we do not need the full strength of our unit-regularity hypothesis on $E/\Jac(E)$ in this section; it would suffice to assume, in Lemma~\ref{lemma:b+adsc} for instance, that $E/\Jac(E)$ is an Artinian ring.  However, we find the more general version of Lemma~\ref{lemma:b+adsc} instructive in light of four related lemmas by other authors that exploit semisimplicity (Bass~\cite[Lemma~6.4]{Bas}, Eisenbud--Evans~\cite[Lemma~6]{EE}, Swan~\cite[Lemma~4]{Swa}, and Warfield~\cite[Lemma~3.1]{War}).  Besides that, we suspect that Lemma~\ref{lemma:b+adsc} could be used to remove a finiteness hypothesis in Condition (2) of our main theorem (Theorem~\ref{theorem:main-later}).  On the other hand, Example~\ref{example:Heinzer} below illustrates that, in Lemma~\ref{lemma:b+adsc}, the ring $E$ cannot be replaced by an arbitrary ring of stable rank 1.

\begin{lemma}\label{lemma:b+adsc}
	Let $a,b,c\in E$ be such that $aE+bE=E=Eb+Ec$.  Then there is $d\in E$ such that, for every central unit $s$ of $E$, the element $b+adsc$ is a unit of $E$.
\end{lemma}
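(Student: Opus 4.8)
The plan is to reduce to the case where $E$ itself is unit-regular and then to exhibit $d$ explicitly using idempotents.

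\emph{Reduction.} Since $E/\Jac(E)$ is unit-regular, it is von Neumann regular, of stable rank $1$, and Dedekind-finite; in such a ring an element is a unit precisely when its right annihilator is zero (from $x=xvx$ one gets $vx=1$ once $\Ann_r(x)=0$, and Dedekind-finiteness upgrades this to a two-sided inverse). An element of $E$ is a unit iff its image in $\bar E:=E/\Jac(E)$ is one, a central unit of $E$ maps to a central unit of $\bar E$, and the hypotheses $aE+bE=E$ and $Eb+Ec=E$ descend to $\bar E$. Hence it suffices to find $\bar d\in\bar E$ with $\bar b+\bar a\bar d\bar t\bar c$ a unit of $\bar E$ for every central unit $\bar t$ of $\bar E$ and then lift $\bar d$ arbitrarily to $d\in E$. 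So assume from now on that $E$ is unit-regular.

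\emph{Building the idempotents.} Using standard facts about von Neumann regular rings (finitely generated right ideals are the direct summands, and the intersection of two of them is again one), I would choose from $aE+bE=E$ an idempotent $e$ with $eE=bE$ and $(1-e)E\subseteq aE$, and set $C:=(1-e)E$, so that $E=bE\oplus C$ with $C\subseteq aE$. Then choose an idempotent $f$ with $Ef=Eb$; a short computation gives $\Ann_r(b)=(1-f)E$, while left multiplication by $b$ carries $fE$ isomorphically onto $bE$, so unit-regular cancellation yields $(1-f)E\cong(1-e)E=C$. Put $c':=c(1-f)$. Since $Eb+Ec=E$ forces $\Ann_r(b)\cap\Ann_r(c)=0$, left multiplication by $c$ is injective on $(1-f)E=\Ann_r(b)$, so $c'E\cong(1-f)E\cong C$; fix an isomorphism $\phi\colon c'E\xrightarrow{\sim}C$.

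\emph{The element $d$.} To build $d$, let $\pi_0$ be an idempotent with $\pi_0E=c'E$, let $\phi_0\in E$ agree with $\phi$ on $c'E$, and---using $C\subseteq aE$ and regularity---choose $\sigma_0\in E$ with $a\sigma_0=1-e$ (namely, take $\sigma_0$ to vanish on $bE$ and to be an $E$-linear section of $a$ on $C$). Set $d:=\sigma_0\phi_0\pi_0$. Then $ad=(a\sigma_0)\phi_0\pi_0=(1-e)\phi_0\pi_0=\phi_0\pi_0$, the last equality because $\phi_0\pi_0$ has image in $C=(1-e)E$; in particular $ead=0$, $adE=C$, and $ad$ restricts to $\phi$ on $c'E$, hence is injective there. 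Now for a central unit $s$ of $E$, suppose $(b+adsc)z=0$. Since $adsc=s\cdot adc$ and $sC=C$, we get $bz=-s\,adcz\in bE\cap C=0$, so $bz=0$ and, $s$ being a unit, $adcz=0$; thus $z\in\Ann_r(b)=(1-f)E$, whence $cz=c'z\in c'E$, so injectivity of $ad$ on $c'E$ gives $cz=0$, and $\Ann_r(b)\cap\Ann_r(c)=0$ forces $z=0$. Therefore $\Ann_r(b+adsc)=0$, and since $E$ is regular and Dedekind-finite, $b+adsc$ is a unit.

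\emph{The main obstacle.} The only place unit-regularity is used beyond plain von Neumann regularity is the cancellation $fE\cong eE\Rightarrow(1-f)E\cong(1-e)E$ furnishing $\phi$---exactly the feature that distinguishes this hypothesis from ``stable rank $1$'' (cf.\ Example~\ref{example:Heinzer})---so I expect that to be the conceptual crux, the remaining steps being bookkeeping with idempotents and annihilators. A lesser technical point is securing $(1-e)E\subseteq aE$, which I would settle via the sublattice structure of principal right ideals in a regular ring.
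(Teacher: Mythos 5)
Your proof is correct, and it takes a genuinely different route from the paper's. After the same reduction to a unit-regular $E$, the paper proceeds by direct computation: it fixes a unit $v$ with $b=bvb$ and elements $e,f,g,h$ with $ae+bf=1=gb+hc$, declares $d:=e(v^{-1}-b)h$ outright, and then, for each central unit $s$, verifies unithood by exhibiting the explicit factorization $b+adsc=(tu)(v^{-1}w)$ into the units $t:=1-bf(1-bv)$, $u:=bv+(1-bv)s$, $v^{-1}$, and $w:=1-(1-vb)gb$ (each of whose inverses is written down). Your argument instead is structural: you assemble $d=\sigma_0\phi_0\pi_0$ from idempotents furnished by the complemented modular lattice of direct summands of $E_E$, and you certify that $b+adsc$ is a unit by computing $\Ann_r(b+adsc)=0$ and invoking the regular-plus-Dedekind-finite criterion. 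What the paper's route buys is a closed formula for $d$ in terms of the original data and a verification that doubles as a unit factorization, with no module-theoretic machinery invoked. What your route buys is a sharp conceptual localization of where unit-regularity, as opposed to mere regularity or stable rank $1$, enters---namely the cancellation $fE\cong eE\Rightarrow(1-f)E\cong(1-e)E$ that makes the isomorphism $\phi\colon c'E\to C$ available. That diagnosis is consistent with the paper's Example~\ref{example:Heinzer}, which exhibits exactly the failure of the lemma over a stable-rank-$1$ ring lacking that cancellation property.
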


\begin{proof}
	Every central unit of $E$ represents a central unit in  $E/\Jac(E)$, and every unit of $E/\Jac(E)$ can be represented by a unit of $E$.  Therefore, we may assume that $E$ is unit-regular.  Insofar as $E$ is unit-regular, there is a unit $v$ of $E$ with $b=bvb$.  Moreover, since $aE+bE=E=Eb+Ec$, there are $e,f,g,h\in E$ with $ae+bf=1=gb+hc$.  We now claim that we may take
	\[
	\begin{array}{rcl}
	d&:=&e(v^{-1}-b)h.
	\end{array}
	\]
	To prove this, let $s$ be a central unit of $E$.
	Since $b=bvb$, the elements $bv$ and $vb$ are idempotents with $(1-bv)b=0=b(1-vb)$.  Hence
	\[
	\begin{array}{rcl}
		t&:=&1-bf(1-bv),\\
		u&:=&bv+(1-bv)s,\\
		w&:=&1-(1-vb)gb
	\end{array}
	\]
	are units of $E$ with
		\[
	\begin{array}{rcl}
		t^{-1}&=&1+bf(1-bv),\\
		u^{-1}&=&bv+(1-bv)s^{-1},\\
		w^{-1}&=&1+(1-vb)gb.
	\end{array}
	\]
	As a result,
	\[
	\begin{array}{rcl}
	x&:=&(tu)(v^{-1}w)\\
	\end{array}
	\]
	is a unit of $E$.  We claim that $x=b+adsc$. We can verify this as follows.  First, since $ae+bf=1=gb+hc$, we have
	\[
	\begin{array}{rcl}
	t&=&1-bf(1-bv),\\
	&=&1-(1-ae)(1-bv),\\
	&=&bv+ae(1-bv)
	\end{array}
	\]
	and
	\[
	\begin{array}{rcl}
	w&=&1-(1-vb)gb,\\
	&=&1-(1-vb)(1-hc),\\
	&=&vb+(1-vb)hc.
	\end{array}
	\]
	Using our new expressions for $t$ and $w$, we get
	\[
	\begin{array}{rcl}
	tu&=&[bv+ae(1-bv)][bv+(1-bv)s]\\
	&=&bv+ae(1-bv)s
	\end{array}
	\]
	and
	\[
	\begin{array}{rcl}
	v^{-1}w&=&v^{-1}[vb+(1-vb)hc]\\
	&=&b+(v^{-1}-b)hc.
	\end{array}
	\]
	Hence
	\[
	\begin{array}{rcl}
	x&=&(tu)(v^{-1}w)\\
	&=&[bv+ae(1-bv)s][b+(v^{-1}-b)hc]\\
	&=&b+adsc
	\end{array}
	\]
	is a unit of $E$.
\end{proof}

As promised, we now demonstrate that the preceding lemma fails if we replace the ring $E$ with an arbitrary ring of stable rank 1.  The particular ring that we construct in the following example is a Euclidean domain with a countably infinite maximal spectrum.  This example is minimal in two ways:  First, a commutative ring of stable rank 1 is unit-regular if and only if it is \textit{$0$-dimensional reduced}, and our example shows that the previous lemma does not hold for an arbitrary \textit{$1$-dimensional domain} of stable rank 1.  Second, every commutative ring with a \textit{finite} maximal spectrum is unit-regular modulo its Jacobson radical, and our example indicates that a commutative ring of stable rank 1 need not have an \textit{uncountable} maximal spectrum in order to violate Lemma~\ref{lemma:b+adsc}.  Another notable property exhibited by our example is that, although our ring contains a field, neither the cardinality nor the characteristic of the field plays a role in our argument.
 
\begin{example}\label{example:Heinzer}
	\textit{There is a Euclidean domain $R$ of stable rank $1$ with a countably infinite maximal spectrum such that the following statement holds:  There are elements $a,b,c$ of $R$ with $aR+bR=R=Rb+Rc$ such that, for every $d\in R$, there is a (necessarily central) unit $s$ of $R$ with $b+adsc$ residing in a maximal ideal of $R$.}
\end{example}

\begin{proof} 
	Let $G$ denote a free $\ZZ$-module of countably infinite rank with standard basis elements $e_1,e_2,e_3,\ldots.$  Define a partial order $\preccurlyeq$ on $G$ by letting $\sum_{m=1}^\infty e_mg_m\preccurlyeq\sum_{m=1}^\infty e_mh_m$ if and only if $g_m\leqslant h_m$ for every positive integer $m$.  It is easily verified that every nonempty finite subset of $G$ has an infimum and a supremum with respect to $\preccurlyeq$.
	
	Let $P:=k\left[x_1^{\pm 1},x_2^{\pm 1},x_3^{\pm 1},\ldots\right]$ denote the ring of all Laurent polynomials in a countably infinite number of variables $x_1,x_2,x_3,\ldots$ over a field $k$.  Define
	\[
	\gamma\left(x_1^{g_1}\cdots x_m^{g_m}f\right):=e_1g_1+\cdots+e_mg_m\in G
	\]
	for every nonzero Laurent monomial $x_1^{g_1}\cdots x_m^{g_m}f\in P$, where $m$ is a positive integer; $g_1,\ldots,g_m$ are integers; and $f$ is a nonzero element of $k$. Next, define
	\[
	\gamma(p_1+\cdots+p_n):=\inf_{\preccurlyeq}\{\gamma(p_1),\ldots,\gamma(p_n)\}\in G
	\]
	for all positive integers $n$ and nonzero nonassociate Laurent monomials $p_1,\ldots,p_n\in P$.  Let $Q$ denote the field of fractions of $P$, and let $Q^*$ denote the group of units of $Q$.  Define
	\[
	\gamma\left(\frac{p}{q}\right):=\gamma(p)-\gamma(q)\in G
	\]
	for all nonzero Laurent polynomials $p,q\in P$.
	
	By Heinzer~\cite{Hein}, the map $\gamma:Q^*\rightarrow G$ thus defined is a surjective group homomorphism with kernel equal to the group of units of the ring $R:=\{r\in Q^*:0_G\preccurlyeq\gamma(r)\}\cup\{0_Q\}$, and $R$ is a one-dimensional B\'ezout domain of stable rank~1 with $\Max(R)=\{x_1R,x_2R,x_3R,\ldots\}$ and with $x_1R,x_2R,x_3R,\ldots$ distinct.  Since every prime ideal of $R$ is principal, $R$ is Noetherian by a well-known theorem of Cohen.  Since $R$ is Noetherian B\'ezout, $R$ is a principal ideal domain.  Since $R$ is a principal ideal domain of stable rank 1, the ring $R$ is a Euclidean domain by Estes--Ohm~\cite[Theorem~5.3]{EO}.
	
	Let $(a,b,c):=(x_1,x_2,1)$ so that $aR+bR=R=Rb+Rc$.  We must show that, for every $d\in R$, there is a unit $s$ of $R$ with $b+adsc$ residing in a maximal ideal of $R$.  If $d=0$, then we may let $s$ be an arbitrary unit of $R$ to finish.  Suppose that $d\neq 0$.  Let $\gamma(d):=e_1g_1+\cdots+e_mg_m$ for some positive integer $m$ and nonnegative integers $g_1,\ldots,g_m$.  Then $d=x_1^{g_1}\cdots x_m^{g_m}u$ for some unit $u$ of $R$.  Let $t:=x_1du^{-1}$, and note that
	\[
	\gamma(u^{-1})=\gamma\left(x_{m+1}-x_2\right)=\gamma\left(x_{m+1}+t\right)=0_G.
	\]
	Hence,
	\[
	s:=u^{-1}\left(\frac{x_{m+1}-x_2}{x_{m+1}+t}\right)
	\]
	is a unit of $R$, and
	\[
	v:=\frac{x_2+t}{x_{m+1}+t}
	\]
	is an element of $R$ with
	\[
	b+adsc=x_{m+1}v\in x_{m+1}R\in\Max(R). \qedhere
	\]
\end{proof}
 
The next lemma supplies a hypothesis guaranteeing the existence of a particular kind of split surjection in $\Hom_S(M,N)$.

\begin{lemma}\label{lemma:(f+edsg)z}
Let $(e,f)$ be a split surjection in $\Hom_S(N\oplus M,N)$. Let $g\in\Hom_S(M,N)$, and suppose that $Ef+Eg$ contains a split surjection.  Then there are maps $d\in E$ and $z\in\Hom_S(N,M)$ such that, for every central unit $s$ of $E$, the map $(f+edsg)z$ is a unit of~$E$ with $(f+edg)z=1$ in particular.
\end{lemma}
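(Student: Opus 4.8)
The plan is to route the hypothesis through the two technical lemmas already proved, Lemma~\ref{lemma:special-section} and Lemma~\ref{lemma:b+adsc}. First I would observe that Lemma~\ref{lemma:special-section} applies in the present setting: the standing assumption that $E/\Jac(E)$ is unit-regular forces $\sr(E)=\sr(E/\Jac(E))=1$. Since $Ef+Eg$ contains a split surjection, I can write that split surjection as $h:=af+bg$ with $a,b\in E$; it is in particular a split surjection in $\Hom_S(M,N)$, so Lemma~\ref{lemma:special-section}, applied to the split surjection $(e,f)$ and to $h$, produces a map $z\in\Hom_S(N,M)$ with
\[
eE+fzE=E=Ehz.
\]

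Next I would extract from this the two one-sided ideal conditions required by Lemma~\ref{lemma:b+adsc} for the triple $(e,\,fz,\,gz)$, where $fz,gz\in E$. The right ideal condition $eE+(fz)E=E$ is simply the first half of the display. For the left ideal condition, note that $E=Ehz=E\bigl(a(fz)+b(gz)\bigr)\subseteq E(fz)+E(gz)$, so $E(fz)+E(gz)=E$. Hence Lemma~\ref{lemma:b+adsc}, with $a\leftarrow e$, $b\leftarrow fz$, and $c\leftarrow gz$, yields an element $d\in E$ such that, for every central unit $s$ of $E$, the element
\[
fz+e\,d\,s\,(gz)=(f+edsg)z
\]
is a unit of $E$.

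It remains only to promote the case $s=1_E$ from ``unit'' to ``identity.'' Setting $u:=(f+edg)z$, a unit by the previous paragraph, I would replace $z$ by $z':=zu^{-1}\in\Hom_S(N,M)$. Then $(f+edg)z'=uu^{-1}=1$, and for every central unit $s$ the map $(f+edsg)z'=\bigl((f+edsg)z\bigr)u^{-1}$ is a product of two units of $E$, hence a unit; so $d$ and $z'$ are as required. The hard part of the lemma has been discharged inside Lemmas~\ref{lemma:special-section} and~\ref{lemma:b+adsc} (and ultimately inside the unit-regularity hypothesis on $E/\Jac(E)$); the only genuinely new move here, the final renormalization, is harmless precisely because right-multiplying every $(f+edsg)z$ by the single fixed unit $u^{-1}$ preserves invertibility uniformly in $s$.
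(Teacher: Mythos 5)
Your proof is correct and follows the paper's own argument essentially step for step: apply Lemma~\ref{lemma:special-section} to get $z$ with $eE+fzE=E=Ehz$, observe that $Ehz\subseteq Efz+Egz$ gives the second one-sided ideal condition, invoke Lemma~\ref{lemma:b+adsc} on the triple $(e,fz,gz)$ to produce $d$, and finally renormalize $z\mapsto zu^{-1}$ with $u:=(f+edg)z$. You simply spell out one detail the paper leaves tacit (that $\sr(E)=1$ follows from unit-regularity of $E/\Jac(E)$, so Lemma~\ref{lemma:special-section} is applicable in this section).
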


\begin{proof}
	Let $h$ be a split surjection in $Ef+Eg$.  By Lemma~\ref{lemma:special-section}, there is $z\in\Hom_S(N,M)$ such that
	\[
	eE+fzE=E=Ehz=Efz+Egz.
	\]
	Now, by Lemma~\ref{lemma:b+adsc}, there is $d\in E$ such that, for every central unit $s$ of $E$, the element $(f+edsg)z$	is a unit of $E$.  If $u:=(f+edg)z\neq 1$, then we may replace $z$ with $zu^{-1}$.
\end{proof}

The last result of this section affords a major inductive step in the proof of our main lemma.

\begin{lemma}\label{lemma:rows}
Select $g_1,\ldots,g_n\in\Hom_S(M,N)$ for some integer $n\geqslant 2$, and suppose that $\delta(Eg_1+\cdots+Eg_n)\geqslant m$ for some $m\in\{1,\ldots,n-1\}$.  Let $e\in E$ be such that the map $(e,g_1)\in\Hom_S(N\oplus M,N)$ is split surjective.  Then,
\begin{quote}
	there is $c_1\in E$ such that, for every central unit $s_1$ of $E$,
	
	\noindent\hspace{0.5in}there is $c_2\in E$ such that, for every central unit $s_2$ of $E$,
	
	\noindent\hspace{1in}$\ldots\ldots\ldots\ldots\ldots\ldots\ldots\ldots\ldots\ldots\ldots\ldots\ldots\ldots\ldots\ldots\ldots\ldots\textnormal{,}$
	
	\noindent\hspace{1.5in}there is $c_m\in E$ such 
	that, for every central unit $s_m$ of~$E$,	
	\[\delta(E(g_1+ec_1s_1g_n)+E(g_2+c_2s_2g_n)+\cdots+E(g_m+c_ms_mg_n)+Eg_{m+1}+\cdots+Eg_{n-1})\geqslant m.\]
\end{quote}
\end{lemma}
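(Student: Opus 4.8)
\emph{Proof proposal.} The plan is induction on $m$, peeling off one pair $(c_k,s_k)$ at a time; Lemma~\ref{lemma:(f+edsg)z} is the engine, its clause ``for every central unit $s$'' being exactly what supplies the nested ``for every central unit $s_k$'' in the conclusion. A preliminary observation I would use repeatedly: if $\hat g_n$ differs from $g_n$ by a member of $\sum_{j=m+1}^{n-1}Eg_j$, then the module $E(g_1+ec_1s_1\hat g_n)+\sum_{i=2}^m E(g_i+c_is_i\hat g_n)+\sum_{j=m+1}^{n-1}Eg_j$ equals the one built from $g_n$ in place of $\hat g_n$ (their generating sets differ by members of $\sum_{j=m+1}^{n-1}Eg_j$, which each module already contains); so I am free to modify the generator being absorbed by elements of $\sum_{j=m+1}^{n-1}Eg_j$ before feeding it to Lemma~\ref{lemma:(f+edsg)z}.

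For the base case $m=1$, I would first apply Lemma~\ref{lemma:identity} to the list $g_1,g_n,g_2,\ldots,g_{n-1}$, and then, exploiting the stable-range hypothesis $\sr(E)=1$, replace $g_n$ by a suitable $\hat g_n\in g_n+\sum_{j=2}^{n-1}Eg_j$ so that $Eg_1+E\hat g_n$ already contains a split surjection. Since $(e,g_1)$ is split surjective by hypothesis, Lemma~\ref{lemma:(f+edsg)z} then yields $c_1$ for which $g_1+ec_1s_1\hat g_n$ is a split surjection for every central unit $s_1$; by the preliminary observation this map lies in $E(g_1+ec_1s_1g_n)+\sum_{j=2}^{n-1}Eg_j$, so $\delta\geqslant 1$. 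For the inductive step I would dispose of the first coordinate exactly as above, obtaining $c_1$ and $g_1':=g_1+ec_1s_1g_n$, and then invoke the lemma for $m-1$ on the list $g_2,\ldots,g_n$ (suitably reordered), taking $1_N\in E$ for ``$e$'': $(1_N,g_2)$ is automatically split surjective, which is precisely why the modifications of $g_2,\ldots,g_m$ carry no prefactor before $c_is_i$. Lemma~\ref{lemma:canc-delta-one}, applied after a permutation of the list, furnishes the hypothesis $\delta(Eg_2+\cdots+Eg_n)\geqslant m-1$ for the recursive call; Lemma~\ref{lemma:identity} and Lemma~\ref{lemma:replacement} are used at each coordinate to present the running witnessing split surjection in the shape required by Lemma~\ref{lemma:(f+edsg)z}; and Lemma~\ref{lemma:spl-infty} handles the degenerate case $N=0$.

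The step I expect to be the main obstacle is certifying that the bound $\delta\geqslant m$---not merely $\delta\geqslant m-1$---survives all $m$ absorptions: one must show that re-inserting the modified generator $g_1'$ lifts the bound back up to $m$, which forces the ``load'' of $g_n$ to be distributed across all of $g_1,\ldots,g_m$ rather than dumped into one of them (dumping would in general collapse $\delta$). This is also where the quantifier order bites: $c_k$ is committed before $s_k$ is revealed, so the intermediate witnessing split surjections must genuinely vary with the $s_k$, which is exactly why one wants Lemma~\ref{lemma:(f+edsg)z}, delivering a family of split surjections indexed by the central units, rather than a bare application of Lemma~\ref{lemma:identity}. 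Keeping each already-modified generator and each passenger $g_{m+1},\ldots,g_{n-1}$ suitably independent of the coordinate being processed, while threading these alternating quantifiers, is the bookkeeping heart of the proof.
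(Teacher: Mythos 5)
Your high-level plan---induction on $m$, powered by Lemma~\ref{lemma:(f+edsg)z}---differs from the paper's argument, which is not a recursion on $m$ at all but a single pass over the $m$ rows of one fixed matrix, and the obstacle you flag at the end is a genuine, unresolved gap rather than a bookkeeping nuisance. In your inductive step you produce a split surjection $g_1'=g_1+ec_1s_1g_n$ and, separately, invoke the inductive hypothesis on $g_2,\ldots,g_n$ to get $\delta\bigl(E(g_2+c_2s_2g_n)+\cdots+E(g_m+c_ms_mg_n)+Eg_{m+1}+\cdots+Eg_{n-1}\bigr)\geqslant m-1$. But ``$g_1'$ is split surjective'' together with ``the remaining module has $\delta\geqslant m-1$'' does not imply $\delta\geqslant m$ for the span: the split surjection onto $N^{\oplus(m-1)}$ supplied by the inductive hypothesis has sections with no reason to be compatible with a section of $g_1'$, and nothing forces the combined $m$-tuple to be split (if $g_1'$ equals each of the maps witnessing $\delta\geqslant m-1$, both hypotheses hold while $\delta$ of the span is $1$). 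You correctly name this as ``the bookkeeping heart of the proof'' but leave it open, and permuting or reapplying Lemma~\ref{lemma:canc-delta-one} will not repair it, because the inner recursive call is blind to $g_1'$.

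The paper avoids this by never separating the first row from the others. It applies Lemma~\ref{lemma:identity} once to get an $m\times n$ matrix with entries $a_{i,j}$ in the last $n-m$ columns whose product with $(g_1,\ldots,g_n)^\top$ is split surjective, uses Lemma~\ref{lemma:(f+edsg)z} to replace the $a_{1,j}$ by $eb_{1,j}s_1$ in row $1$, and then uses Lemma~\ref{lemma:replacement} to repackage this as a split surjection $Y_1=B_1(g_1,\ldots,g_n)^\top$ with a concrete section $Z_1$. For each subsequent row $k+1$, it applies Lemma~\ref{lemma:b+adsc} (with $a=1$) to the scalar identity $\bigl(g_{k+1}+\sum_j a_{k+1,j}g_j\bigr)z_{k,k+1}=1$ to produce $d_{k+1}$, hence $b_{k+1,j}$ and $c_{k+1}$, and---crucially---writes down an explicit new section $Z_{k+1}$ in terms of $Z_k$ and the resulting unit $u_{k+1}$, so that $Y_{k+1}=B_{k+1}(g_1,\ldots,g_n)^\top$ stays split surjective. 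Because the not-yet-processed rows keep their original $a_{i,j}$ throughout, split surjectivity onto $N^{\oplus m}$ is never lost, and at the end the left $m\times(n-1)$ submatrix of $B_m$ witnesses $\delta\geqslant m$ for the claimed module. Your $m=1$ base case is in the right spirit (though the detour through $\hat g_n$ is unnecessary: one reads off $c_1=d_1a_{1,n}$ directly from Lemma~\ref{lemma:(f+edsg)z}, since the leftover $\sum_{j=m+1}^{n-1}eb_{1,j}s_1g_j$ lands in $Eg_{m+1}+\cdots+Eg_{n-1}$ anyway), but the inductive step needs the section-updating machinery to carry the bound, and that is exactly what your recursion discards.
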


\begin{proof}
By the second statement of Lemma~\ref{lemma:identity}, there are $a_{1,m+1},\ldots,a_{1,n}\in E$ with
\[
g_1+\sum_{j=m+1}^n a_{1,j}g_j
\]
split surjective.  Since $(e,g_1)$ is also split surjective, Lemma~\ref{lemma:(f+edsg)z} tells us that there is $d_1\in E$ such that, for every central unit $s_1$ of $E$, the map
\[
g_1+ed_1s_1\sum_{j=m+1}^n a_{1,j}g_j
\]
is split surjective.  For every $j\in\{m+1,\ldots,n\}$, let $b_{1,j}:=d_1a_{1,j}$.  Note that $c_1:=b_{1,n}$ does not depend on $s_1$.

We now fix a central unit $s_1$ of $E$ and the concomitant split surjective map
\[
y:=g_1+\left(\sum_{j=m+1}^{n-1} eb_{1,j}s_1g_j\right)+ec_1s_1g_n.
\]
By Lemma~\ref{lemma:replacement}, there are $a_{i,j}\in E$ such that, if
\[
B_1:=
\begin{pmatrix}
1		& 0		 			& \cdots 	& \cdots 	& 0 		& eb_{1,m+1}s_1 	& \cdots 	& eb_{1,n-1}s_1&ec_1s_1		\\
0		& 1  				& 0	& \cdots	& 0 		& a_{2,m+1}	& \cdots	& a_{2,n-1}&a_{2,n}		\\
\vdots	& 0  			& \ddots	& \ddots 	& \vdots 	& \vdots 		&  			& \vdots&\vdots	\\
\vdots	& \vdots  		& \ddots	& \ddots	& 0		 	& \vdots 		& 		 	& \vdots&\vdots		\\
0 		& 0	 				& \cdots 	& 0 		& 1 		& a_{m,m+1} 	& \cdots	& a_{m,n-1}&a_{m,n}
\end{pmatrix}\in\mathbf{M}_{m \times n}(E),
\]
then $Y_1:=(y_{1,1},\ldots,y_{1,m})^\top:=B_1(g_1,\ldots,g_n)^\top$ is split surjective.  Notice that $y_{1,1}=y$.

Now let $k\in\{1,\ldots,m-1\}$, and suppose inductively that there are $b_{i,j},c_i\in E$ and central units $s_i$ of $E$ such that, if
\[
B_k:=
\setcounter{MaxMatrixCols}{20}
\begin{pmatrix}
	1&0&\cdots&\cdots&0&0&\cdots&\cdots&0&eb_{1,m+1}s_1&\cdots&eb_{1,n-1}s_1&ec_1s_1\\
	0&1&0&\cdots&0&0&\cdots&\cdots&0&b_{2,m+1}s_2&\cdots&b_{2,n-1}s_2&c_2s_2\\
	\vdots&0&\ddots&\ddots&\vdots&\vdots&&&\vdots&\vdots&&\vdots&\vdots\\
	\vdots&\vdots&\ddots&\ddots&0&\vdots&&&\vdots&\vdots&&\vdots&\vdots\\
	0&0&\cdots&0&1&0&\cdots&\cdots&0&b_{k,m+1}s_k&\cdots&b_{k,n-1}s_k&c_ks_k\\
	0&0&\cdots&\cdots&0&1&0&\cdots&0&a_{k+1,m+1}&\cdots&a_{k+1,n-1}&a_{k+1,n}\\
	\vdots&\vdots&&&\vdots&0&\ddots&\ddots&\vdots&\vdots&&\vdots&\vdots\\
	\vdots&\vdots&&&\vdots&\vdots&\ddots&\ddots&0&\vdots&&\vdots&\vdots\\
	0&0&\cdots&\cdots&0&0&\cdots&0&1&a_{m,m+1}&\cdots&a_{m,n-1}&a_{m,n}\\
\end{pmatrix},
\]
then $Y_k:=(y_{k,1},\ldots,y_{k,m})^\top:=B_k(g_1,\ldots,g_n)^\top$ is split surjective.  Let $Z_k:=(z_{k,1},\ldots,z_{k,m})$ be a section of $Y_k$.  Since
\[
\left(g_{k+1}+\sum_{j=m+1}^n a_{k+1,j}g_j\right)z_{k,k+1}=1,
\]
Lemma~\ref{lemma:b+adsc} tells us that there is $d_{k+1}\in E$ such that, for every central unit $s_{k+1}$ of $E$, the map
\[
\left(g_{k+1}+d_{k+1}s_{k+1}\sum_{j=m+1}^n a_{k+1,j}g_j\right)z_{k,k+1}
\]
is a unit of $E$.  For every $j\in\{m+1,\ldots,n\}$, let $b_{k+1,j}:=d_{k+1}a_{k+1,j}$.  Note that $c_{k+1}:=b_{k+1,n}$ may depend on $c_1,s_1,\ldots,c_k,s_k$ but does not depend on $s_{k+1}$.

Now fix a central unit $s_{k+1}$ of $E$, and define the unit
\[
u_{k+1}:=\left[g_{k+1}+\left(\sum_{j=m+1}^{n-1} b_{k+1,j}s_{k+1}g_j\right)+c_{k+1}s_{k+1}g_n\right]z_{k,k+1}
\]
of $E$.  Let
\[
B_{k+1}:=
\setcounter{MaxMatrixCols}{20}
\begin{pmatrix}
1&0&\cdots&\cdots&0&0&\cdots&\cdots&0&eb_{1,m+1}s_1&\cdots&eb_{1,n-1}s_1&ec_1s_1\\
0&1&0&\cdots&0&0&\cdots&\cdots&0&b_{2,m+1}s_2&\cdots&b_{2,n-1}s_2&c_2s_2\\
\vdots&0&\ddots&\ddots&\vdots&\vdots&&&\vdots&\vdots&&\vdots&\vdots\\
\vdots&\vdots&\ddots&\ddots&0&\vdots&&&\vdots&\vdots&&\vdots&\vdots\\
0&0&\cdots&0&1&0&\cdots&\cdots&0&b_{k+1,m+1}s_{k+1}&\cdots&b_{k+1,n-1}s_{k+1}&c_{k+1}s_{k+1}\\
0&0&\cdots&\cdots&0&1&0&\cdots&0&a_{k+2,m+1}&\cdots&a_{k+2,n-1}&a_{k+2,n}\\
\vdots&\vdots&&&\vdots&0&\ddots&\ddots&\vdots&\vdots&&\vdots&\vdots\\
\vdots&\vdots&&&\vdots&\vdots&\ddots&\ddots&0&\vdots&&\vdots&\vdots\\
0&0&\cdots&\cdots&0&0&\cdots&0&1&a_{m,m+1}&\cdots&a_{m,n-1}&a_{m,n}\\
\end{pmatrix};
\]
let  $Y_{k+1}:=(y_{k+1,1},\ldots,y_{k+1,m})^\top:=B_{k+1}(g_1,\ldots,g_n)^\top$; and let $Z_{k+1}:=(z_{k+1,1},\ldots,z_{k+1,m})$, where
\[
z_{k+1,k+1}:=z_{k,k+1}u_{k+1}^{-1}
\]
and
\[
z_{k+1,j}:=z_{k,j}-z_{k,k+1}u_{k+1}^{-1}y_{k+1,k+1}z_{k,j}
\]
for every $j\in\{1,\ldots,m\}\setminus\{k+1\}$.  By direct computation, $Y_{k+1}Z_{k+1}$ is the $m\times m$ identity matrix, so $Y_{k+1}$ is split surjective.  This completes our inductive step.

Induction now implies that there are $b_{i,j},c_i\in E$ and central units $s_i$ of $E$ such that, if
\[
B_m:=
\begin{pmatrix}
1		& 0		 			& \cdots 	& \cdots 	& 0 		& eb_{1,m+1}s_1 	& \cdots 	& eb_{1,n-1}s_1&ec_1s_1		\\
0		& 1  				& 0	& \cdots	& 0 		& b_{2,m+1}s_2	& \cdots	& b_{2,n-1}s_2&c_2s_2		\\
\vdots	& 0  			& \ddots	& \ddots 	& \vdots 	& \vdots 		&  			& \vdots&\vdots	\\
\vdots	& \vdots  		& \ddots	& \ddots	& 0		 	& \vdots 		& 		 	& \vdots&\vdots		\\
0 		& 0	 				& \cdots 	& 0 		& 1 		& b_{m,m+1}s_m 	& \cdots	& b_{m,n-1}s_m&c_ms_m \\
\end{pmatrix}\in\mathbf{M}_{m \times n}(E),
\]
then $B_m(g_1,\ldots,g_n)^\top$ is split surjective.  Moreover, our proof indicates that, for every index $i\in\{1,\ldots,m\}$, the element $c_i$ may depend on $c_1,s_1,\ldots,c_{i-1},s_{i-1}$ but does not depend on $s_i,c_{i+1},s_{i+1},\ldots,c_m,s_m$.

Now, let $C$ be the left $m\times (n-1)$ submatrix of $B_m$ so that
\[
C:=
\begin{pmatrix}
1		& 0		 			& \cdots 	& \cdots 	& 0 		& eb_{1,m+1}s_1 	& \cdots 	& eb_{1,n-1}s_1\\
0		& 1  				& 0	& \cdots	& 0 		& b_{2,m+1}s_2	& \cdots	& b_{2,n-1}s_2\\
\vdots	& 0  			& \ddots	& \ddots 	& \vdots 	& \vdots 		&  			& \vdots	\\
\vdots	& \vdots  		& \ddots	& \ddots	& 0		 	& \vdots 		& 		 	& \vdots		\\
0 		& 0	 				& \cdots 	& 0 		& 1 		& b_{m,m+1}s_m 	& \cdots	& b_{m,n-1}s_m\\
\end{pmatrix}\in\mathbf{M}_{m \times (n-1)}(E),
\]
and let
\[
(h_1,\ldots,h_{n-1})^{\top}:=(g_1+ec_1s_1g_n,g_2+c_2s_2g_n,\ldots,g_m+c_ms_mg_n,g_{m+1},\ldots,g_{n-1})^{\top}.
\]
Then $C(h_1,\ldots,h_{n-1})^{\top}=B_m(g_1,\ldots,g_n)^\top$ is split surjective.  The lemma now follows.
\end{proof}

We establish our main lemma in the next section after adapting three results from a previous paper by the author~\cite{Bai1} to suit our purposes here.

\section{Our main lemma}\label{sec:main-lemma}

In this section, we assume that $M$ and $N$ are right modules over a ring $S$ that is an algebra over a commutative ring~$R$, and we let $E:=\End_S(N)$ denote the $R$-algebra of all $S$-linear endomorphisms of $N$.  We also fix the following hypotheses, which manifest as Conditions (1) and (2) in our main theorem (Theorem~\ref{theorem:main-later}):
\begin{enumerate}
	\item $X:=j$-$\Spec(R)\cap\Supp_R(N)$ is a Noetherian subspace of the Zariski space $\Spec(R)$.
	\item $N$ is a finitely presented $S$-module, and $E$ is a module-finite $R$-algebra. 
\end{enumerate}

Proofs of the first three lemmas of this section are similar to proofs of three analogous lemmas from a previous paper by the author~\cite{Bai1}.  Accordingly, rather than provide proofs of the next three lemmas, we simply point out the results on which they depend and the lemmas from the earlier paper~\cite{Bai1} to which they correspond.  We remind the reader that the set $\mathit{\Lambda}$ appearing in Lemmas~\ref{lemma:canc-Lambda} and~\ref{lemma:sur-reduction} is defined in Section~\ref{sec:foundations}.  

Our main lemma rounds out this section.

\begin{lemma}\label{lemma:spl-closed}
	Let $F$ be a left $E$-submodule of $\Hom_S(M,N)$, and let $m$ be a nonnegative integer.  Then the set $Y_m:=\{\p\in X:\delta(F_{\p})\leqslant m\}$ is closed in $X$.
\end{lemma}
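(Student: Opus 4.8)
The plan is to exhibit $Y_m$ as a finite union of closed sets of the Zariski space. First I would dispose of the trivial case $N = 0$: then $\delta(F_\p) = \infty$ for every $\p$, so $Y_m = \emptyset$, which is closed. So assume $N \neq 0$, and recall that $X = j\text{-}\Spec(R) \cap \Supp_R(N)$ is closed in $j\text{-}\Spec(R)$ by the last remark of Section~\ref{sec:foundations}, ``Topology'', since $E = \End_S(N)$ is module-finite over $R$ and $\Supp_R(N) = \Supp_R(E)$ is closed in $\Spec(R)$. The key observation is that for a prime $\p \in X$, the condition $\delta(F_\p) \leqslant m$ is equivalent to the \emph{failure} to find a split surjection $M_\p \to N_\p^{\oplus(m+1)}$ inside $F_\p^{\oplus(m+1)}$, and that whether or not such a split surjection exists can be tested using finitely many elements of $F$.

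The heart of the argument is therefore a finiteness reduction. Since $N$ is finitely presented and $E$ is module-finite over $R$, the module $F \subseteq \Hom_S(M,N)$ need not be finitely generated over $E$ a priori, but for the purpose of detecting a split surjection into $N_\p^{\oplus(m+1)}$ only a finitely generated left $E$-submodule of $F$ is relevant locally. More precisely, I would first treat the case where $F$ itself is finitely generated over $E$, say $F = Ef_1 + \cdots + Ef_n$. In this case, a split surjection in $F_\p^{\oplus(m+1)}$ exists if and only if there is a matrix $A \in \mathbf{M}_{(m+1) \times n}(E_\p)$ with $A(f_1, \ldots, f_n)^\top$ split surjective from $M_\p$ onto $N_\p^{\oplus(m+1)}$, and such $A$ can be cleared to have $I_{m+1}$ among its columns by Lemma~\ref{lemma:identity} after localizing (note $E_\p/\Jac(E_\p)$ is Artinian since $E_\p$ is module-finite over the local ring $R_\p$, hence has stable rank $1$). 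The existence of such a split surjection is thus governed by the solvability over $S_\p$ of a finite system of $S$-linear equations involving $M$, $N$, and the $f_i$; since $M$ may not be finitely presented, one works instead with the fact that split surjectivity of a fixed map $M \to N^{\oplus(m+1)}$ (built from the $f_i$ with coefficients in $E$) localizes well because $N^{\oplus(m+1)}$ is finitely presented. The upshot is that $X \setminus Y_m = \{\p \in X : \delta(F_\p) \geqslant m+1\}$ is the union, over all finitely many coefficient patterns producing a candidate map, of the loci where that candidate becomes split surjective after localizing — and ``becomes split surjective'' is an \emph{open} condition on $\Spec(R)$ because the section, once it exists at $\p$, exists in a neighborhood (the relevant $\Hom$ and tensor constructions commute with localization up to the finite-presentation of $N^{\oplus(m+1)}$). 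Hence $Y_m$ is closed.

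For the general case where $F$ is not assumed finitely generated, I would argue that $\delta(F_\p) = \sup_{F' } \delta(F'_\p)$ as $F'$ ranges over the finitely generated left $E$-submodules of $F$, so $Y_m = \bigcap_{F'} Y_m(F')$, an intersection of closed sets by the finitely generated case, hence closed. The main obstacle I anticipate is the bookkeeping in the finitely generated case: making precise that the ``candidate split surjections'' form a set whose relevant behavior is controlled by finitely many closed/open conditions, and that passing to the localization $S_\p$ genuinely detects $\delta(F_\p)$ — this requires care because $M$ carries no finiteness hypothesis, so one must route all the finiteness through $N^{\oplus(m+1)}$ (finitely presented) and through $E$ (module-finite over $R$, so that $\Hom_{S_\p}(M_\p, N_\p^{\oplus(m+1)})$ is a suitable localization of $\Hom_S(M, N^{\oplus(m+1)})$ and the $E_\p$-module of candidate maps is the localization of its global analogue). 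Once that compatibility is pinned down, constructibility plus the fact that split surjectivity is open gives the result, and the passage from constructible to closed uses that $Y_m$ is also closed under specialization within $X$ — which follows because $\delta(F_-)$ can only drop under specialization (a split surjection at $\p$ pushes forward to a split surjection at any $\q \supseteq \p$ in $X$ by base change along $S_\p \to S_\q$).
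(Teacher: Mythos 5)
The paper does not spell out a proof here, deferring instead to Lemma~8.8 of \cite{Bai1}, so a line-by-line comparison is not possible; I will assess your argument on its own merits. The governing idea is sound and is very likely the intended one: $X\setminus Y_m=\{\p\in X:\delta(F_\p)\geqslant m+1\}$ is a union, over candidate maps $g:=(g_1,\ldots,g_{m+1})^\top$ with each $g_i\in F$, of the loci where $g_\p$ is split surjective, and each such locus is open because a section $h$ of $g_\p$ lies in $\Hom_{S_\p}(N_\p^{\oplus(m+1)},M_\p)\cong\Hom_S(N^{\oplus(m+1)},M)_\p$ by the finite presentation of $N$, and the identity $g_\p h=1$ then takes place in the localization at $\p$ of the finitely generated $R$-module $\mathbf{M}_{(m+1)\times(m+1)}(E)$ and so persists on a basic open neighborhood of $\p$. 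That already finishes the proof.

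A few items need correction. First, you write ``over all finitely many coefficient patterns'': the coefficients range over the infinite ring $E$, so the union is in general infinite. This is harmless, since an arbitrary union of open subsets of $X$ is open, but it clashes with your closing appeal to constructibility (finite Boolean combinations of opens), which is then neither needed nor applicable; once you know $X\setminus Y_m$ is a union of opens you are done, without invoking specialization at all. Second, your parenthetical justifying closure under specialization has the base change reversed: for $\p\subseteq\q$ in $X$ it is $S_\p$ that is a localization of $S_\q$, so a split surjection over $S_\q$ localizes to one over $S_\p$, giving $\delta(F_\q)\leqslant\delta(F_\p)$; your conclusion (specialization cannot raise $\delta$, so $Y_m$ is stable under specialization) is correct, but the stated mechanism points the wrong way. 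Third, neither Lemma~\ref{lemma:identity} nor the reduction to finitely generated $F$ is actually needed: the openness argument above makes no use of the $I_{m+1}$ normalization and applies directly to arbitrary $F$ without first intersecting $Y_m(F')$ over finitely generated submodules $F'$. Those extra steps are not wrong, just superfluous.
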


\begin{proof}
	This echoes the proof of~\cite[Lemma~8.8]{Bai1}.  The proof here does not rely on the requirement in~\cite{Bai1} that $S$ be a module-finite $R$-algebra.
\end{proof}

\begin{lemma}\label{lemma:canc-Lambda}
	Let $F$ be a finitely generated left $E$-submodule of $\Hom_S(M,N)$.  Then the set $\mathit{\Lambda}$ of test points of $\delta(F_-)$ in $X$ is finite.  On top of that, for every $\p\in X\setminus\mathit{\Lambda}$, there is $\q\in\mathit{\Lambda}$ with $\q\subsetneq \p$ and $\delta(F_{\q})=\delta(F_{\p})$.
\end{lemma}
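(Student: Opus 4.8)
The plan is to derive both statements from three facts about the sublevel sets $Y_m:=\{\p\in X:\delta(F_\p)\leqslant m\}$: each $Y_m$ is closed in $X$ (Lemma~\ref{lemma:spl-closed}); the function $\p\mapsto\delta(F_\p)$ is non-increasing along specializations; and $\delta(F_\p)\leqslant\mu_E(F)$ for every $\p\in X$. For the monotonicity, observe that if $\q\subseteq\p$ are primes of $R$ then $R_\q$ is a localization of $R_\p$, so localizing a split surjection in $F_\p^{\oplus m}$ onto $N_\p^{\oplus m}$ yields a split surjection in $F_\q^{\oplus m}$ onto $N_\q^{\oplus m}$; hence $\delta(F_\p)\leqslant\delta(F_\q)$. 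Consequently each $Y_m$ is stable under specialization in $X$. Since $X$ is Noetherian and sober (an irreducible closed subset of $X$ has as generic point the intersection of its members, which, being an intersection of maximal ideals containing $\Ann_R(E)$, again lies in $X$), the set $\Min(Y_m)$ is the finite set of generic points of the irreducible components of $Y_m$, and every member of $Y_m$ contains some member of $\Min(Y_m)$.

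Next I would prove the uniform bound. Fix $\p\in X$ and put $E_\p:=\End_{S_\p}(N_\p)$. As $N$ is finitely presented over $S$, the natural map gives $E_\p\cong E\otimes_R R_\p$, a module-finite algebra over the local ring $R_\p$; so Bass's Stable Range Theorem yields $\sr(E_\p)\leqslant 1+\dim\Max(R_\p)=1$. Because $\p\in\Supp_R(N)$ we have $N_\p\neq 0$, and the image $\overline{F}$ of $F$ in $\Hom_{S_\p}(M_\p,N_\p)$ is a finitely generated left $E_\p$-submodule on at most $\mu_E(F)$ generators whose $\delta$-value equals $\delta(F_\p)$. Applying Lemma~\ref{lemma:spl-infty} over $S_\p$ therefore gives $\delta(F_\p)=\delta(\overline{F})\leqslant\mu_{E_\p}(\overline{F})\leqslant\mu_E(F)$. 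Writing $n:=\mu_E(F)$, this means $Y_n=X$, so $\mathit{\Lambda}=\bigcup_{m=0}^{n}\Min(Y_m)$ is a finite union of finite sets; this proves the first assertion.

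For the second assertion, let $\p\in X\setminus\mathit{\Lambda}$ and set $d:=\delta(F_\p)$, a finite number by the bound just proved, so that $\p\in Y_d$. By the first paragraph there is a generic point $\q\in\Min(Y_d)\subseteq\mathit{\Lambda}$ of an irreducible component of $Y_d$ that contains $\p$, so $\q\subseteq\p$; and $\q\neq\p$ because $\p\notin\mathit{\Lambda}$, so in fact $\q\subsetneq\p$. Finally, $\q\in Y_d$ gives $\delta(F_\q)\leqslant d$, while $\q\subseteq\p$ gives $\delta(F_\q)\geqslant\delta(F_\p)=d$ by monotonicity; hence $\delta(F_\q)=\delta(F_\p)$, as desired.

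I expect the uniform bound to be the main obstacle, since it is the only place where finite generation of $F$ and module-finiteness of $E$ are genuinely used, and it forces a descent to the local rings $R_\p$ so that the stable rank is $1$ and Lemma~\ref{lemma:spl-infty} applies. The accompanying bookkeeping---that localization carries $F$ into $\Hom_{S_\p}(M_\p,N_\p)$ as a submodule needing no additional generators, and that its $\delta$-value coincides with $\delta(F_\p)$ as defined---should be routine given that $N$ is finitely presented, but it is the step that most warrants care.
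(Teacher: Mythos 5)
Your proof is correct and follows the same route the paper indicates: it rests on Lemma~\ref{lemma:spl-closed} for closedness of the sublevel sets $Y_m$, on Lemma~\ref{lemma:spl-infty} (applied after localizing, using that $\sr(E_\p)=1$ by Bass's Stable Range Theorem over the local ring $R_\p$) for the uniform bound $\delta(F_\p)\leqslant\mu_E(F)$, and on standard Noetherian-space facts about minimal primes of closed sets, just as the paper's pointer to \cite[Lemma~5.3]{Bai1} suggests. The monotonicity of $\delta(F_-)$ under specialization and the identification $\delta(F_\p)=\delta(\overline{F})$ are precisely the routine bookkeeping the paper leaves to the reader.
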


\begin{proof}
	The reader may simply mimic the proof of~\cite[Lemma~5.3]{Bai1}.  Here, our proof rests on Lemmas~\ref{lemma:spl-infty} and \ref{lemma:spl-closed} from this paper.  As in Lemma~\ref{lemma:spl-closed}, the ring $S$ need not be a module-finite $R$-algebra here.
\end{proof}

\begin{lemma}\label{lemma:sur-reduction}
	Suppose that $\Hom_S(M,N^{\oplus n})$ contains an $X$-split map $(f_1,\ldots,f_n)^{\top}$ for some integer $n\geqslant 2$.  Let $F:=Ef_1+\cdots+Ef_n$, and let $\mathit{\Lambda}$ be the set of test points of $\delta(F_-)$ in $X$.   Let $U\in\mathbf{GL}_n(E)$, and let  $(p_1,\ldots,p_n)^\top:=U(f_1,\ldots,f_n)^\top$. Suppose that $(p_1,\ldots,p_{n-1})^\top$ is $\mathit{\Lambda}$-split.  Then $(p_1,\ldots,p_{n-1})^\top$ is $X$-split.
\end{lemma}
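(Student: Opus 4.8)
The plan is to prove the contrapositive-flavored statement by reduction to the finite set $\mathit{\Lambda}$: we already know $(p_1,\ldots,p_{n-1})^{\top}$ is $\mathit{\Lambda}$-split, and we want to promote this to $X$-split. Recall that $\mathit{\Lambda}$ is the set of test points of $\delta(F_-)$ in $X$, and by Lemma~\ref{lemma:canc-Lambda} this set is finite, with the crucial property that for every $\p\in X\setminus\mathit{\Lambda}$ there is $\q\in\mathit{\Lambda}$ with $\q\subsetneq\p$ and $\delta(F_{\q})=\delta(F_{\p})$. So fix an arbitrary $\p\in X$; if $\p\in\mathit{\Lambda}$ we are done by hypothesis, so assume $\p\notin\mathit{\Lambda}$ and pick such a $\q\in\mathit{\Lambda}$. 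The goal is to show $(p_1,\ldots,p_{n-1})^{\top}$ is $\p$-split, i.e., that $\delta(G_{\p})\geqslant\min\{n-1,\,1+\dim_X(\p)\}$ where $G:=Ep_1+\cdots+Ep_{n-1}$.

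First I would record what $\q\in\mathit{\Lambda}$ and $\q\subsetneq\p$ give us. Since $(f_1,\ldots,f_n)^{\top}$ is $X$-split, it is in particular $\q$-split, so $\delta(F_{\q})\geqslant\min\{n,1+\dim_X(\q)\}$; and from $(p_1,\ldots,p_{n-1})^{\top}$ being $\mathit{\Lambda}$-split we get $\delta(G_{\q})\geqslant\min\{n-1,1+\dim_X(\q)\}$. The key numerical input is $\dim_X(\q)\geqslant 1+\dim_X(\p)$, which holds because $\q\subsetneq\p$ forces the closure of $\{\p\}$ to sit strictly inside the closure of $\{\q\}$ in $X$. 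Combining, $\delta(G_{\q})\geqslant\min\{n-1,\,2+\dim_X(\p)\}$. Now localizing further from $S_{\q}$ to $S_{\p}$ (the localization $(-)_{\p}$ factors through $(-)_{\q}$ up to the obvious flat base change $S_{\q}\to S_{\p}$), a split surjection found in $G_{\q}^{\oplus m}$ remains a split surjection in $G_{\p}^{\oplus m}$, so $\delta(G_{\p})\geqslant\delta(G_{\q})\geqslant\min\{n-1,\,2+\dim_X(\p)\}\geqslant\min\{n-1,\,1+\dim_X(\p)\}$, which is exactly $\p$-splitness. That finishes it — but I would double-check the edge case $\dim_X(\p)=\infty$ (ruled out since $X$ is Noetherian, hence finite-dimensional along each closed irreducible piece through $\p$) and the case $n-1\leqslant 1+\dim_X(\p)$, where the required bound is just $n-1$ and is already supplied at $\q$.

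The role of $U\in\mathbf{GL}_n(E)$ and of the hypothesis that $(p_1,\ldots,p_n)^{\top}=U(f_1,\ldots,f_n)^{\top}$: this is needed so that $G=Ep_1+\cdots+Ep_{n-1}$ is a genuine left $E$-submodule of $\Hom_S(M,N)$ to which the $\delta$-machinery and Lemma~\ref{lemma:canc-Lambda} apply to the \emph{same} $F$, and so that the test-point set $\mathit{\Lambda}$ attached to $F$ is the right index set to check. One should note $F=Ep_1+\cdots+Ep_n$ as well since $U$ is invertible, so $G\subseteq F$ and $\delta(F_{\p})\geqslant\delta(G_{\p})$ always; the content is the \emph{lower} bound on $\delta(G_{\p})$, and this is precisely where Lemma~\ref{lemma:canc-delta-one} enters — applied over $S_{\q}$ with $U$ (or rather $U_{\q}$) and $k=1$, it upgrades $\delta(F_{\q})\geqslant\min\{n,1+\dim_X(\q)\}$ to $\delta(G_{\q})\geqslant\min\{n,1+\dim_X(\q)\}-1$, which is how we actually know $(p_1,\ldots,p_{n-1})^{\top}$ behaves well at $\q$ without separately assuming it. So the real structure of the argument is: (i) $\mathit{\Lambda}$-splitness at $\q\in\mathit{\Lambda}$ plus Lemma~\ref{lemma:canc-delta-one} controls $\delta(G_{\q})$; (ii) the dimension jump $\dim_X(\q)\geqslant 1+\dim_X(\p)$ from $\q\subsetneq\p$; (iii) monotonicity of $\delta$ under further localization $S_{\q}\to S_{\p}$.

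I expect the main obstacle to be step (iii) done carefully — verifying that $\delta(G_{\q})\leqslant\delta(G_{\p})$, i.e. that split surjections localize along $S_{\q}\to S_{\p}$ in the right way and that this is compatible with the identification $(G_{\q})_{\p S_{\q}}\cong G_{\p}$ as left $E_{\p}$-modules inside $\Hom_{S_{\p}}(M_{\p},N_{\p})$ — together with being scrupulous about whether the relevant inequalities should use $\delta(F_{\q})$ or $\delta(G_{\q})$ at each stage and whether we need $n\geqslant 2$ versus $n\geqslant 3$ for Lemma~\ref{lemma:canc-delta-one} to fire (it needs $m=\delta(F_{\q})\geqslant 2$, so the case where this $\delta$-value is $0$ or $1$ must be handled by hand, but there $\min\{n-1,1+\dim_X(\p)\}$ is also forced small). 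Everything else is bookkeeping with the definitions of $\q$-split, $W$-split, and $\mathit{\Lambda}$ from Section~\ref{sec:foundations}.
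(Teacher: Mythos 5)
There is a genuine gap, and it is exactly where you yourself anticipated trouble: step~(iii). You claim that, given $\q\subsetneq\p$, localization factors as $S_{\q}\to S_{\p}$ and hence $\delta(G_{\p})\geqslant\delta(G_{\q})$. This is backwards. Since $\q\subsetneq\p$, we have $R\setminus\p\subsetneq R\setminus\q$, so $S_{\q}\cong (S_{\p})_{\q S_{\p}}$ is a \emph{further} localization of $S_{\p}$: the maps go $S\to S_{\p}\to S_{\q}$. A split surjection over $S_{\p}$ localizes to one over $S_{\q}$, so the monotonicity is $\delta(G_{\p})\leqslant\delta(G_{\q})$. Your large lower bound on $\delta(G_{\q})$ therefore gives no lower bound on $\delta(G_{\p})$ at all, and the argument collapses at precisely the point where you hoped it would close.

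Notice also that your argument never actually uses the equality $\delta(F_{\q})=\delta(F_{\p})$ supplied by Lemma~\ref{lemma:canc-Lambda}; you only use $\q\in\mathit{\Lambda}$ and the dimension jump. That equality is the crucial ingredient, and the correct route applies Lemma~\ref{lemma:canc-delta-one} over $S_{\p}$ rather than over $S_{\q}$. Concretely: set $m:=\delta(F_{\p})=\delta(F_{\q})$ and note $G_{\p}=E_{\p}p_1+\cdots+E_{\p}p_{n-1}$ with $(p_1,\ldots,p_n)^{\top}=U(f_1,\ldots,f_n)^{\top}$ and $U_{\p}\in\GL_n(E_{\p})$. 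By Lemma~\ref{lemma:spl-infty} applied over $S_{\p}$ (where $N_{\p}\neq 0$ since $\p\in X\subseteq\Supp_R(N)$), $m\leqslant n$. If $m=n$, then $m\geqslant 2$ and Lemma~\ref{lemma:canc-delta-one} over $S_{\p}$ with $k=1$ gives $\delta(G_{\p})\geqslant n-1$. If $m<n$, then $\q$-splitness of $(f_1,\ldots,f_n)^{\top}$ forces $m=\delta(F_{\q})\geqslant 1+\dim_X(\q)\geqslant 2+\dim_X(\p)\geqslant 2$, and Lemma~\ref{lemma:canc-delta-one} over $S_{\p}$ with $k=1$ gives $\delta(G_{\p})\geqslant m-1\geqslant 1+\dim_X(\p)$. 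Either way $(p_1,\ldots,p_{n-1})^{\top}$ is $\p$-split. So the correct proof transfers information from $\q$ to $\p$ not by localizing splittings (which only go the useless direction) but by transferring the \emph{number} $\delta(F_{\q})=\delta(F_{\p})$ via Lemma~\ref{lemma:canc-Lambda} and then peeling off a row at $\p$ itself.
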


\begin{proof}
	The proof of~\cite[Lemma~5.7]{Bai1} can serve as a guide to the reader, although here we must appeal to Lemmas~\ref{lemma:canc-delta-one} and \ref{lemma:canc-Lambda} from this paper.  Once again, the ring $S$ need not be a module-finite $R$-algebra here.  On the other hand, the requirement in \cite[Lemma~5.7]{Bai1} that $X$ have finite dimension is harmless here since it is implied by the conditions of the present lemma; see Remark~\ref{remark:finite}.
\end{proof}

We are now ready to establish our main lemma.  This statement confirms and generalizes Conjecture~3.2.9 from the author's PhD dissertation~\cite{Bai3}.  Our main lemma also confirms \cite[Conjecture~3.2.8]{Bai3} and affirmatively answers \cite[Question~8.23]{Bai1} if $R=S=\End_S(N)$ and if, in our definition of a $\q$-split map from Section~\ref{sec:foundations}, we replace the number $1+\dim_X(\q)$ with $t+\dim_X(\q)$ for some fixed positive integer $t$.  The general versions of~\cite[Conjecture~3.2.8]{Bai3} and \cite[Question~8.23]{Bai1} are still open, as are \cite[Conjecture~3.2.7]{Bai3} and \cite[Question~7.2]{Bai1}.

\begin{lemma}[Main Lemma]\label{lemma:main}
	Suppose that there is a map $(f_1,\ldots,f_n)^{\top}\in\Hom_S(M,N^{\oplus n})$ that is $X$-split for some integer $n\geqslant 2$, and let $e\in E$ be such that $(e,f_1)\in\Hom_S(N\oplus M,N)$ is split surjective.  Then there are $d_1,\ldots,d_{n-1}\in E$ such that
	\[
	(f_1+ed_1f_n,f_2+d_2f_n,\ldots,f_{n-1}+d_{n-1}f_n)^{\top}
	\]
	is $X$-split.  Necessarily,
	\[
	\begin{pmatrix}
	e & f_1+ed_1f_n \\
	\end{pmatrix}
	=\begin{pmatrix}
	e & f_1 \\
	\end{pmatrix}
	\begin{pmatrix}
	1_N & d_1f_n \\
	0 & 1_M \\
	\end{pmatrix}\in\Hom_S(N\oplus M,N)
	\]
	will be split surjective.
\end{lemma}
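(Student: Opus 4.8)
The plan is to realize the passage from $(f_1,\dots,f_n)^{\top}$ to the shorter tuple as an elementary column operation and then lean on the reduction-to-test-points machinery of Section~\ref{sec:main-lemma}. Put $F:=Ef_1+\cdots+Ef_n$, and for $d_1,\dots,d_{n-1}\in E$ let $U=U(d_1,\dots,d_{n-1})\in\GL_n(E)$ be the unipotent matrix whose first $n-1$ columns are the first $n-1$ standard basis vectors of $E^{n}$ and whose last column is $(ed_1,d_2,\dots,d_{n-1},1)^{\top}$; the first $n-1$ coordinates of $U(f_1,\dots,f_n)^{\top}$ are then exactly $f_1+ed_1f_n,\ f_2+d_2f_n,\ \dots,\ f_{n-1}+d_{n-1}f_n$. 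By Lemma~\ref{lemma:sur-reduction} this tuple is $X$-split as soon as it is $\mathit{\Lambda}$-split, where $\mathit{\Lambda}$ is the set of test points of $\delta(F_-)$ in $X$, and $\mathit{\Lambda}$ is finite by Lemma~\ref{lemma:canc-Lambda}. So everything reduces to choosing $d_1,\dots,d_{n-1}\in E$ that simultaneously satisfy the finitely many local conditions $\delta(F'_{\p})\geqslant m_{\p}$ for $\p\in\mathit{\Lambda}$, where $F':=E(f_1+ed_1f_n)+\cdots+E(f_{n-1}+d_{n-1}f_n)$ and $m_{\p}:=\min\{n-1,1+\dim_X(\p)\}$. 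The closing display of the lemma is then automatic: pre-composing the split surjection $(e,f_1)$ with the invertible endomorphism $\bigl(\begin{smallmatrix}1_N&d_1f_n\\0&1_M\end{smallmatrix}\bigr)$ of $N\oplus M$ yields $(e,f_1+ed_1f_n)$, which is therefore split surjective.

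Fix a single $\p\in\mathit{\Lambda}$. Since $N$ is finitely presented and $E$ is module-finite over $R$, localizing at $\p$ gives $E_{\p}\cong\End_{S_{\p}}(N_{\p})$; being module-finite over the Noetherian local ring $R_{\p}$, this ring is semilocal with $\p E_{\p}$ contained in each of its maximal ideals, so $E_{\p}/\p E_{\p}$ is a finite-dimensional algebra over the residue field of $R_{\p}$, hence Artinian, and therefore $E_{\p}/\Jac(E_{\p})$ is semisimple Artinian, in particular unit-regular. Thus $E_{\p}$ meets the standing hypotheses of Section~\ref{sec:unit-regular}. Because $(f_1,\dots,f_n)^{\top}$ is $\p$-split we have $\delta(F_{\p})\geqslant\min\{n,1+\dim_X(\p)\}\geqslant m_{\p}$, and $(e,f_1)$ localizes to a split surjection $(e,(f_1)_{\p})$; so Lemma~\ref{lemma:rows}, applied after localizing at $\p$ with $m=m_{\p}$, furnishes, in the nested form of its statement, elements $c_1,\dots,c_{m_{\p}}$ of $E_{\p}$ — each $c_{i+1}$ depending on $c_1,s_1,\dots,c_i,s_i$ but valid for every central unit $s_i$ of $E_{\p}$ — such that the $(n-1)$-tuple $(f_1)_{\p}+ec_1s_1(f_n)_{\p},\ \dots,\ (f_{m_{\p}})_{\p}+c_{m_{\p}}s_{m_{\p}}(f_n)_{\p},\ (f_{m_{\p}+1})_{\p},\ \dots,\ (f_{n-1})_{\p}$ has $\delta\geqslant m_{\p}$, i.e.\ is $\p$-split.

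The genuinely hard part is to fuse these local solutions into one global choice $(d_1,\dots,d_{n-1})\in E$. I would proceed by induction along an enumeration $\mathit{\Lambda}=\{\p_1,\dots,\p_r\}$: assuming $d_1,\dots,d_{n-1}\in E$ have been found with $F'$ already $\p_i$-split for every $i\leqslant j$, localize at $\p_{j+1}$, invoke Lemma~\ref{lemma:rows} as above, clear denominators so that the resulting $c_i$ lie in $E$, and then spend the ``for every central unit $s_i$'' freedom by multiplying by central units of $E$ congruent to the previously fixed data modulo $\p_1\cap\cdots\cap\p_j$ — such units exist by a prime-avoidance argument in the Noetherian ring $R$, since central units of $R$ map to central units of $E$. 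This keeps $F'$ $\p_i$-split for $i\leqslant j$ while making it $\p_{j+1}$-split. Two points demand care. First, $m_{\p}$ is nonincreasing along specialization, so the ``inactive'' index set $\{\,i:i>m_{\p}\,\}$ only shrinks as $\p$ grows; choosing the corresponding entries $d_i$ inside $\p$ forces the extra perturbations of $f_i$ to lie in $\Jac(E_{\p})\cdot\Hom_{S_{\p}}(M_{\p},N_{\p})$, and adding such a term to the generators of $F'_{\p}$ sends split surjections to split surjections (a relation $A(\cdots)^{\top}=1$ is merely perturbed within $1+\Jac(E_{\p})$), so $\delta(F'_{\p})\geqslant m_{\p}$ is unaffected; this is what reconciles test points of differing dimensions. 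Second, because each $c_i$ from Lemma~\ref{lemma:rows} depends on the earlier $c$'s and $s$'s, the induction over coordinates inside a single test point and the induction over test points must be interleaved in the correct order, with the prime-avoidance choices of central units inserted between successive stages. This simultaneous local-to-global patching — exactly the maneuver that the ``for every central unit $s$'' architecture of Lemmas~\ref{lemma:b+adsc}, \ref{lemma:(f+edsg)z}, and \ref{lemma:rows} was designed to make possible — is the main obstacle; everything else is the elementary-operation bookkeeping already packaged in Lemmas~\ref{lemma:identity} and~\ref{lemma:replacement}.
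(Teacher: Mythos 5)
Your framework --- pass to $\mathit{\Lambda}$-splitness via Lemmas~\ref{lemma:canc-Lambda} and~\ref{lemma:sur-reduction}, verify that $E_\p/\Jac(E_\p)$ is unit-regular so the lemmas of Section~\ref{sec:unit-regular} apply after localizing, and invoke Lemma~\ref{lemma:rows} at each test point --- matches the paper, and your justification of the closing display is fine. The gap is exactly where you place it: the local-to-global patching. As written, that step does not close. The phrase ``multiplying by central units of $E$ congruent to the previously fixed data modulo $\p_1\cap\cdots\cap\p_j$'' is not a workable recipe: central units of $E$ are images of units of $R$, and there is no reason a unit of $R$ should be congruent modulo $J:=\p_1\cap\cdots\cap\p_j$ to an arbitrary previously chosen $d_i\in E$; even if one were, the product $c_is_i$ would still not be congruent to $d_i$ modulo $JE$, so the previously secured $\p_l$-splitness for $l\leqslant j$ would not survive the change. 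Compounding this, you apply Lemma~\ref{lemma:rows} at $\p_{j+1}$ to the original $f_i$'s, so its output is a perturbation of the $f_i$'s, whereas your inductive data $(d_1,\ldots,d_{n-1})$ describe a different perturbation; you supply no mechanism for merging the two.

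The paper's argument differs in two essential ways that close this gap. It orders $\mathit{\Lambda}$ so that no member contains a predecessor, which guarantees (by prime avoidance) that $J\not\subseteq\q$ for the current test point $\q$. And at each $\q$ it applies Lemma~\ref{lemma:rows} to the already-accumulated tuple $g_i:=f_i+a_if_n$ (with $g_n:=f_n$), not to the original $f_i$. The crucial move you are missing comes when returning from $E_\q$ to $E$: each $c_{k+1}\in E_\q$ produced by Lemma~\ref{lemma:rows} is written as $b_{k+1}/r_{k+1}$ with $r_{k+1}\in J\setminus\q$ (possible because $J\not\subseteq\q$), and the ``for every central unit $s_{k+1}$'' freedom is then spent on $s_{k+1}:=r_{k+1}^2/1$. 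This makes $c_{k+1}s_{k+1}=b_{k+1}r_{k+1}/1$ the localization of $b_{k+1}r_{k+1}\in JE$, which vanishes modulo each predecessor $\p$ of $\q$; Nakayama therefore preserves $\p$-splitness at predecessors, while Lemma~\ref{lemma:rows} delivers $\q$-splitness. The inactive indices $i>m$ are simply set to zero --- your extra care there, while harmless, is not needed --- and the coordinate induction is finished entirely at one test point before the next is considered, so no interleaving occurs.
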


\begin{proof}
	First of all, the conditions listed at the beginning of this section imply that $E_\p\cong(\End_S(N))_\p$ is a ring of stable rank 1 and that $E_\p/\Jac(E_\p)$ is a unit-regular ring for every $\p\in X$.  Hence, whenever we localize at a member $\p$ of $X$, we may apply the lemmas from Sections~\ref{sec:sr1} and~\ref{sec:unit-regular} to the right $S_\p$-modules $M_\p$ and $N_\p$ and the $R_\p$-algebra $E_\p$.  We will use this fact tacitly at various points in the proof at hand.
	
	Now, let $\mathit{\Lambda}$ denote the set of test points of $\delta(F_-)$ in $X$.  By Lemma~\ref{lemma:canc-Lambda}, the set $\mathit{\Lambda}$ is finite.  List the members of $\mathit{\Lambda}$ so that no member contains any of its predecessors.  Let $\q\in\mathit{\Lambda}$, and suppose inductively that there are $a_1,\ldots,a_{n-1}\in E$ such that
	\[
	(g_1,\ldots,g_{n-1})^{\top}:=(f_1+ea_1f_n,f_2+a_2f_n,\ldots,f_{n-1}+a_{n-1}f_n)^{\top}
	\]
	is $\p$-split for every predecessor $\p$ of $\q$.  Then, necessarily, $(e,g_1)$ will be split surjective.  Let $J$ be the intersection of the predecessors of $\q$, and let $g_n:=f_n$.  To complete our inductive step, it suffices to find $b_1,\ldots,b_{n-1}\in E$ and $r_1,\ldots,r_{n-1}\in J\setminus\q$ such that
	\[
	(h_1,\ldots,h_{n-1})^{\top}:=(g_1+eb_1r_1g_n,g_2+b_2r_2g_n,\ldots,g_{n-1}+b_{n-1}r_{n-1}g_n)^{\top}
	\]
	is $\q$-split:  Since $r_1,\ldots,r_{n-1}\in J\setminus\q$, Nakayama's Lemma will imply that $(h_1,\ldots,h_{n-1})^{\top}$ is also $\p$-split for every predecessor $\p$ of $\q$, and, necessarily, $(e,h_1)$ will be split surjective.
	
	Let $F:=Ef_1+\cdots+Ef_n$; let $G:=Eg_1+\cdots+Eg_{n-1}$; and let $m:=\delta(F_{\q})$.  By Lemma~\ref{lemma:spl-infty}, we know that $m\leqslant n$.
	
	Suppose first that $m=n$.  Then $m\geqslant 2$, and so $\delta(G_\q)\geqslant n-1$ by Lemma~\ref{lemma:canc-delta-one}.  Now, $(g_1,\ldots,g_{n-1})^\top$ is $\q$-split, which implies that we may take $b_1:=\cdots:=b_{n-1}:=0_E$ and that we may take $r_1,\ldots,r_{n-1}$ to be arbitrary members of $J\setminus\q$ to complete our inductive step.  
	
	For the remainder of the proof, then, suppose that $m\leqslant n-1$.  Since $(f_1,\ldots,f_n)^{\top}$ is $\q$-split, we must have $m\geqslant 1+\dim_X(\q)$.  (We use this fact near the end of the proof.)  By Lemma~\ref{lemma:rows},
	\begin{quote}
		there is $c_1\in E_\q$ such that, for every central unit $s_1$ of $E_\q$,
		
		\noindent\hspace{0.5in}there is $c_2\in E_\q$ such that, for every central unit $s_2$ of $E_\q$,
		
		\noindent\hspace{1in}$\ldots\ldots\ldots\ldots\ldots\ldots\ldots\ldots\ldots\ldots\ldots\ldots\ldots\ldots\ldots\ldots\ldots\ldots\textnormal{,}$
		
		\noindent\hspace{1.5in}there is $c_m\in E_\q$ such 
		that, for every central unit $s_m$~of~$E_\q$,	
	\begin{gather*}
	\delta\left(E_\q\left(\frac{g_1}{1}+\frac{e}{1}\cdot c_1s_1\cdot\frac{g_n}{1}\right)+E_\q\left(\frac{g_2}{1}+c_2s_2\cdot\frac{g_n}{1}\right)+\cdots+E_\q\left(\frac{g_m}{1}+c_ms_m\cdot\frac{g_n}{1}\right)\right.\hspace{0.99in}\\
	\hspace{3.725in}\left.+E_\q\left(\frac{g_{m+1}}{1}\right)+\cdots+E_\q\left(\frac{g_{n-1}}{1}\right)\right)\geqslant m.
	\end{gather*}
	\end{quote}
	Let $k\in\{0,\ldots,m-1\}$, and suppose inductively that we have chosen appropriate elements $c_1,\ldots,c_k$ of $E_\q$ and central units $s_1,\ldots,s_k$ of $E_\q$ relative to the last display.  Based on these choices, select $c_{k+1}\in E_\q$ appropriately; find $b_{k+1}\in E$ and $r_{k+1}\in J\setminus\q$ such that $c_{k+1}=b_{k+1}/r_{k+1}\in E_\q$; and let $s_{k+1}:=r^2_{k+1}/1\in E_\q$ so that $c_{k+1}s_{k+1}=b_{k+1}r_{k+1}/1\in E_\q$.  By induction, there are $b_1,\ldots,b_m\in E$ and $r_1,\ldots,r_m\in J\setminus \q$ such that, if
	\[
	(h_1,\ldots,h_{n-1})^{\top}:=(g_1+eb_1r_1g_n,g_2+b_2r_2g_n,\ldots,g_m+b_mr_mg_n,g_{m+1},\ldots,g_{n-1})^{\top}
	\]
	and $H:=Eh_1+\cdots+Eh_{n-1}$, then $\delta(H_\q)\geqslant m\geqslant 1+\dim_X(\q)$. Hence $(h_1,\ldots,h_{n-1})^\top$ is $\q$-split.  So we may complete our inductive step by setting $b_{m+1}:=\cdots:=b_{n-1}:=0_E$ and by letting $r_{m+1},\ldots,r_{n-1}$ be arbitrary members of $J\setminus\q$.
	
	Now, by induction on the members of $\mathit{\Lambda}$, there are $d_1,\ldots,d_{n-1}\in E$ such that
	\[
	(f_1+ed_1f_n,f_2+d_2f_n,\ldots,f_{n-1}+d_{n-1}f_n)^{\top}
	\]
	is $\mathit{\Lambda}$-split and, therefore, $X$-split by Lemma~\ref{lemma:sur-reduction}.  This certifies the first statement of the lemma, and the second statement of the lemma is evident, given the first.
\end{proof}

With the proof of our main lemma now complete, we are poised to establish our main theorem.  We accomplish this goal in the next section.  

\section{Our main theorem and two corollaries}\label{sec:main-theorem}

In this section, we prove our main theorem (Theorem~\ref{theorem:main-later}) and two corollaries (Corollaries~\ref{corollary:gen-Bass} and \ref{corollary:gen-DSPY}).  As promised in our introduction, Corollary~\ref{corollary:gen-Bass} contains the cancellation theorems of
Bass~\cite[Theorem~9.3]{Bas} and 
Dress~\cite[Theorem~2]{Dress}, and Corollary~\ref{corollary:gen-DSPY} recovers Theorem~\ref{theorem:short} from this paper as well as the
De~Stefani--Polstra--Yao Cancellation Theorem~\cite[Theorem~3.14]{DSPY}.

We rehash our main theorem here to aid the reader.  The statement below is identical to Theorem~\ref{theorem:main}  with the exception of Condition (3), which we rephrase here using the $\delta$~operator.

\begin{theorem}[Main Theorem, cf.~Theorem~\ref{theorem:main}]\label{theorem:main-later}
	Let $K$, $L$, $M$, and $N$ be right modules over a ring $S$ that is an algebra over a commutative ring $R$, and let $E:=\End_S(N)$ denote the $R$-algebra of all $S$-linear endomorphisms of $N$.  Assume the following:
	\begin{enumerate}
		\item $X:=j$-$\Spec(R)\cap\Supp_R(N)$ is a Noetherian subspace of the Zariski space $\Spec(R)$.
		\item $N$ is a finitely presented $S$-module, and $E$ is a module-finite $R$-algebra.
		\item There is a finitely generated left $E$-submodule $F$ of $\Hom_S(M,N)$ such that, for every $\p\in X$, we have $\delta(F_\p)\geqslant 1+\dim_X(\p)$.
	\end{enumerate}
	Then $N\oplus L\cong N\oplus M\Longrightarrow L\cong M$.  More generally, if $K$ is a direct summand of a direct sum of finitely many copies of $N$ over $S$, then $K\oplus L\cong K\oplus M\Longrightarrow L\cong M$.
\end{theorem}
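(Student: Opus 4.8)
The plan is to reduce the statement to cancelling a single copy of $N$, translate that into a question about a split surjection $N\oplus M\to N$, feed $F$ into Lemma~\ref{lemma:main} to bring that split surjection to standard form, and then kill the remaining obstruction by a support argument.

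First I would treat $N=0$ separately (the statement is then vacuous) and reduce to the case $K=N$: if $K$ is a direct summand of $N^{\oplus t}$ over $S$, then $K\oplus L\cong K\oplus M$ forces $N^{\oplus t}\oplus L\cong N^{\oplus t}\oplus M$, and one cancels the copies of $N$ one at a time, taking at each stage, in the role of the submodule demanded by Condition~(3), the image of $F$ under the $E$-linear injection $\Hom_S(M,N)\hookrightarrow\Hom_S(N^{\oplus j}\oplus M,N)$ that extends a map by zero on $N^{\oplus j}$; this injection does not decrease the value of $\delta$ at any prime of $X$, so Condition~(3) is preserved. So assume $K=N$, $N\oplus L\cong N\oplus M$, and $N\neq 0$; then $X$ is nonempty and $F\neq 0$. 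Composing an isomorphism $N\oplus M\xrightarrow{\sim}N\oplus L$ with the projection onto $N$ produces a split surjection $(e,f)\in\Hom_S(N\oplus M,N)$, with $e\in E$ and $f\in\Hom_S(M,N)$, such that $\ker(e,f)\cong L$. It now suffices to find $d_0\in\Hom_S(N,M)$ for which $e+fd_0$ is a unit of $E$: precomposing $(e,f)$ with the unipotent automorphism $\left(\begin{smallmatrix}1_N&0\\ d_0&1_M\end{smallmatrix}\right)$ of $N\oplus M$ turns it into $(e+fd_0,f)$, precomposing further with $\left(\begin{smallmatrix}(e+fd_0)^{-1}&0\\ 0&1_M\end{smallmatrix}\right)$ and then $\left(\begin{smallmatrix}1_N&-f\\ 0&1_M\end{smallmatrix}\right)$ turns it into $(1_N,0)$, whose kernel is $M$, and an automorphism of $N\oplus M$ does not alter the isomorphism type of the kernel of a split surjection onto $N$; hence $L\cong\ker(e,f)\cong M$.

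Next I would bring in $F$ and iterate Lemma~\ref{lemma:main}. The left $E$-submodule $Ef+F$ of $\Hom_S(M,N)$ is finitely generated and satisfies $\delta((Ef+F)_\p)\geqslant\delta(F_\p)\geqslant 1+\dim_X(\p)$ for every $\p\in X$; choose a finite generating tuple $(f_1,\dots,f_n)^{\top}\in\Hom_S(M,N^{\oplus n})$ of $Ef+F$ over $E$ with $f_1=f$ and $n\geqslant 2$ (prepend $f$ to any generating tuple). This tuple is $X$-split. Now apply Lemma~\ref{lemma:main} repeatedly: each application is legitimate because the current tuple has length at least $2$ until the final step, and each replaces the tuple by one of length smaller by one whose first entry is the old first entry plus $e$ times an element of $E$ times the old last entry. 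By the ``Necessarily'' clause of Lemma~\ref{lemma:main}, which is precisely precomposition of $(e,f_1)$ with a unipotent automorphism of $N\oplus M$ of the shape displayed there, the split surjection $(e,f_1)\in\Hom_S(N\oplus M,N)$ --- and with it the isomorphism type of its kernel, which remains $\cong L$ --- is unchanged throughout the iteration. After finitely many steps we are left with a single $\phi\in\Hom_S(M,N)$ such that $(e,\phi)\in\Hom_S(N\oplus M,N)$ is a split surjection with $\ker(e,\phi)\cong L$ and $\delta((E\phi)_\p)\geqslant 1$ for every $\p\in X$. For each such $\p$, finite presentation of $N$ gives $E_\p\cong\End_{S_\p}(N_\p)$, which is module-finite over the local ring $R_\p$, so $\sr(E_\p)=1$ by Bass's Stable Range Theorem; hence Lemma~\ref{lemma:DF} shows that $\phi_\p$ is itself a split surjection for every $\p\in X$.

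Finally I would remove the last obstruction. Set $\mathfrak a:=\phi\circ\Hom_S(N,M)$, a right ideal of $E$ and, since $R$ is central, an $R$-submodule of $E$. For $\p\in X$ the split surjection $\phi_\p$ admits a section, so $1_{N_\p}$ lies in $\phi_\p\circ\Hom_{S_\p}(N_\p,M_\p)$, which finite presentation of $N$ identifies with $\mathfrak a_\p$; thus $\mathfrak a_\p=E_\p$ for every $\p\in X$. As $\Max(R)\cap\Supp_R(E)=\Max(R)\cap\Supp_R(N)\subseteq X$, the finitely generated $R$-module $E/\mathfrak a$ --- a quotient of the module-finite $R$-module $E$ --- has no maximal ideal of $R$ in its support, so $E/\mathfrak a=0$. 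Hence $1_N\in\mathfrak a$, i.e.\ $\phi s=1_N$ for some $s\in\Hom_S(N,M)$, and then $d_0:=s(1_N-e)$ satisfies $e+\phi d_0=e+(\phi s)(1_N-e)=1_N$, a unit of $E$. By the first paragraph $L\cong M$, which is the case $K=N$; the general case follows as indicated there. The genuine difficulty has already been absorbed into Lemma~\ref{lemma:main}; in the argument above the only points needing care are the bookkeeping in the iteration --- keeping the kernel of the tracked split surjection isomorphic to $L$, which is exactly the purpose of the ``Necessarily'' clause --- and the vanishing $E/\mathfrak a=0$, which is where Condition~(2) and the containment $\Max(R)\cap\Supp_R(N)\subseteq X$ (immediate from the definition of $X$) are used outside Lemma~\ref{lemma:main}; I expect the iteration bookkeeping to be the main thing to get right.
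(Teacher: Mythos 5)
Your proof is correct and follows essentially the same route as the paper's: reduce to $K=N$, prepend $f$ to a generating tuple of $F$, iterate Lemma~\ref{lemma:main} down to a single $X$-split map, pass from local split-surjectivity (via Lemma~\ref{lemma:DF} and the fact that each $E_\p$ has stable rank $1$) to global split-surjectivity by a support argument, and then clear the first row of the isomorphism matrix by elementary block operations. The paper is terser at the local-to-global step (citing Lemma~\ref{lemma:DF} ``combined with Conditions (1) and (2)'' without spelling out the support argument you give explicitly) and defers the kernel comparison to a single application of the Five Lemma at the end rather than tracking $\ker(e,\cdot)\cong L$ through each iteration via the ``Necessarily'' clause, but these are presentational differences only.
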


\begin{proof}
	There is a right $S$-module $Q$ with $Q\oplus K\cong N^{\oplus m}$ for some positive integer $m$.  Hence $N^{\oplus m}\oplus L\cong N^{\oplus m}\oplus M$, and so, by induction on $m$, we may assume that $N\oplus L\cong N\oplus M$.  Now, there is $(e,f_1)\in\Hom_S(N\oplus M,N)$ such that
	\[
	\begin{pmatrix}
	e & f_1 \\
	* & * \\
	\end{pmatrix}
	\in\Hom_S(N\oplus M,N\oplus L)
	\]
	is an isomorphism.	By Condition (3), there are $f_2,\ldots,f_n\in F$ (for some integer $n\geqslant 2$) such that $F=Ef_2+\cdots+Ef_n$, and so $(f_1,\ldots,f_n)^{\top}\in\Hom_S(M,N^{\oplus n})$ is $X$-split.  Hence, by Conditions (1) and (2), we may apply our main lemma (Lemma~\ref{lemma:main}) iteratively $n-1$ times to obtain $f_0\in F$ such that $f_1+ef_0$ is $X$-split.  Lemma~\ref{lemma:DF}, combined with Conditions (1) and (2), then implies that $f_1+ef_0$ is split surjective over~$S$.  Let $g\in\Hom_S(N,M)$ be a section of $f_1+ef_0$, and let
	\[
	U:=
	\begin{pmatrix}
	1_N & f_0 \\
	0 & 1_M \\
	\end{pmatrix}
	\begin{pmatrix}
	1_N & 0 \\
	g-ge & 1_M \\
	\end{pmatrix}	
	\begin{pmatrix}
	1_N & -f_1-ef_0 \\
	0 & 1_M \\
	\end{pmatrix}\in\End_S(N\oplus M).
	\]
	Then $U$ is a unit of $\End_S(N\oplus M)$.  Hence
	\[
	\begin{pmatrix}
	e & f_1 \\
	* & * \\
	\end{pmatrix}
	U
	=\begin{pmatrix}
	1_N & 0 \\
	* & * \\
	\end{pmatrix}
	\in\Hom_S(N\oplus M,N\oplus L)
	\]
	is an isomorphism.  By the Five Lemma, $L\cong M$.
\end{proof}

\begin{remark}\label{remark:finite}
	The conditions of our main theorem collectively imply that $\dim(X)$ is finite:  If not, then there is $\q\in\Min(X)$ with $\delta(F_\q)\geqslant 1+\dim_X(\q)=\infty$, and so Lemma~\ref{lemma:spl-infty} tells us that $N_\q=0$, contrary to the hypothesis that $\q\in X\subseteq\Supp_R(N)$.  By the same reasoning, we do not need to assume in Corollary~\ref{corollary:gen-DSPY} below that $\dim(X)$ is finite since this property is forced by the constraints there.	However, we do assume that the dimension of $Y:=\Max(R)\cap\Supp_R(N)$ is finite in Lemma~\ref{lemma:spl-fg-L} and Corollary~\ref{corollary:gen-Bass} below; the reason is that, in each of these findings, we must account for the possibility that $\Hom_S(P,N)$ is not finitely generated as a left module over $\End_S(N)$.
\end{remark}

\begin{lemma}\label{lemma:spl-fg-L}
	Let $N$ and $P$ be right modules over a ring $S$ that is an algebra over a commutative ring $R$, and let $E:=\End_S(N)$ denote the $R$-algebra of all $S$-linear endomorphisms of~$N$.  Assume the following:
	\begin{enumerate}
		\item $Y:=\Max(R)\cap\Supp_R(N)$ is a finite-dimensional Noetherian subspace of the Zariski space $\Spec(R)$.
		\item $N$ is a finitely presented $S$-module.
		\item $P$ is a direct summand of a direct sum of finitely presented right $S$-modules, and $N_\m^{\oplus (1+\dim(Y))}$ is a direct summand of $P_\m$ over $S_\m$ for every $\m\in Y$.
	\end{enumerate}
	Then there is a finitely generated left $E$-submodule $F$ of $\Hom_S(P,N)$ such that, for every $\p$ in 
	 $X:=j$-$\Spec(R)\cap\Supp_R(N)$, we have  $\delta(F_{\p})\geqslant 1+\dim_X(\p)$.
\end{lemma}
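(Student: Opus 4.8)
The plan is to reduce the statement to producing a finitely generated $F$ with $\delta(F_\m)\geqslant 1+d$ at every $\m\in Y$, where $d:=\dim Y$, and then to build such an $F$ by a local construction followed by a quasi-compactness argument. For the reduction: given $\p\in X$, since $N_\p\neq 0$ there is $x\in N$ with $\Ann_R(x)\subseteq\p$, and choosing a maximal ideal $\m\supseteq\p$ we get $\Ann_R(x)\subseteq\m$, hence $N_\m\neq 0$, i.e.\ $\m\in Y$; moreover a split surjection stays split surjective under further localization, so $\delta(F_\p)\geqslant\delta(F_\m)$ for any left $E$-submodule $F$. Applying the same observation to the generic points of chains of irreducible closed sets in $X$ — together with Swan's identification of the closed irreducible subsets of $\Max(R)$ and of $j$-$\Spec(R)$, and the fact just used that every maximal ideal above a point of the support again lies in the support — one checks $\dim_X(\p)\leqslant\dim X=\dim Y=d$ for every $\p\in X$. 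Thus a finitely generated left $E$-submodule $F\subseteq\Hom_S(P,N)$ with $\delta(F_\m)\geqslant 1+d$ for all $\m\in Y$ automatically satisfies $\delta(F_\p)\geqslant 1+\dim_X(\p)$ for all $\p\in X$.

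For the local construction, fix $\m\in Y$ and write $P$ as a direct summand of $T:=\bigoplus_{i\in I}Q_i$ with each $Q_i$ finitely presented, via $\iota\colon P\to T$ and $\pi\colon T\to P$ with $\pi\iota=1_P$. Hypothesis~(3) supplies a split surjection $\sigma\colon P_\m\to N_\m^{\oplus(1+d)}$ with section $\tau$. Then $\sigma\pi_\m\colon T_\m\to N_\m^{\oplus(1+d)}$ is split surjective with section $\iota_\m\tau$, and since $N^{\oplus(1+d)}$ is finitely generated the image of $\iota_\m\tau$ lies in $T'_\m$ for some finite $I_0\subseteq I$, where $T':=\bigoplus_{i\in I_0}Q_i$. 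Writing $j\colon T'\hookrightarrow T$ for the inclusion and $p\colon T\to T'$ for the projection (so $pj=1_{T'}$), the map $\sigma\pi_\m j_\m\colon T'_\m\to N_\m^{\oplus(1+d)}$ is still split surjective, and because $T'$ is finitely presented it is the localization at $\m$ of a global map $\Psi\colon T'\to N^{\oplus(1+d)}$ after clearing a denominator $s\in R\setminus\m$, with $\Psi_\m$ split surjective. Now set $g:=\Psi\circ p\circ\iota\colon P\to N^{\oplus(1+d)}$, a genuinely global map. One computes $g_\m=s\,\sigma\rho_\m$, where $\rho_\m:=\pi_\m(jp)_\m\iota_\m\in\End_{S_\m}(P_\m)$; since $jp$ restricts to the identity on $T'$, which contains the image of $\iota_\m\tau$, one gets $\rho_\m\tau=\tau$, whence $\sigma\rho_\m$, and therefore $g_\m$, is split surjective onto $N_\m^{\oplus(1+d)}$. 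Writing $g=(g_1,\dots,g_{1+d})^{\top}$ with $g_l\in\Hom_S(P,N)$, the finitely generated left $E$-submodule $F^{(\m)}:=Eg_1+\cdots+Eg_{1+d}$ of $\Hom_S(P,N)$ then has $\delta(F^{(\m)}_\m)\geqslant 1+d$.

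To globalize, I would use that, by Lemma~\ref{lemma:spl-closed} (whose proof needs only that $N$ be finitely presented and that the ambient space be Noetherian, the latter holding for $X$ because it holds for $Y$), the locus $\{\p:\delta(F^{(\m)}_\p)\geqslant 1+d\}$ is open; hence its trace $U_\m$ on $Y$ is an open neighborhood of $\m$. Since $Y$ is Noetherian, hence quasi-compact, finitely many $U_{\m_1},\dots,U_{\m_t}$ cover $Y$, and $F:=F^{(\m_1)}+\cdots+F^{(\m_t)}$ is a finitely generated left $E$-submodule of $\Hom_S(P,N)$. For $\m\in Y$, picking $l$ with $\m\in U_{\m_l}$ gives $\delta(F_\m)\geqslant\delta(F^{(\m_l)}_\m)\geqslant 1+d$ because $\delta$ is monotone under enlarging the submodule. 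Combined with the reduction in the first paragraph, $\delta(F_\p)\geqslant 1+\dim_X(\p)$ for every $\p\in X$, as required.

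The hard part will be the descent in the local construction. Because $P$ is only a summand of a (possibly infinite) direct sum of finitely presented modules, $\Hom_S(P,-)$ need not commute with localization, so the splitting from hypothesis~(3) cannot be lifted to a global map out of $P$ in any direct way. The device that rescues this is to route the splitting through a \emph{finite} sub-sum $T'$ of the $Q_i$, which is finitely presented, using that the section of a splitting onto a finitely generated module is supported on finitely many of the summands; the correction operator $\rho_\m$ then agrees with the identity on the image of $\tau$ and differs from it only along $\ker\sigma$, which is harmless since we need $g_\m$ only to be split \emph{surjective}, not an isomorphism. Verifying the identity $\rho_\m\tau=\tau$, checking that the sets $U_\m$ are open under the present (weakened) hypotheses, and confirming $\dim X=\dim Y$ are where the remaining routine work lies.
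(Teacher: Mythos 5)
Your proof is correct and follows the approach the paper outlines: a local construction of a finitely generated $E$-submodule, with the key technical device of routing the localized splitting through a finite sub-sum $T'$ of the $Q_i$ (which is what lets the argument avoid any module-finiteness assumption on $S$), followed by openness via Lemma~\ref{lemma:spl-closed} and quasi-compactness of the Noetherian space $Y$. The paper itself only cites an analogous lemma from \cite{Bai1} and asserts dependence solely on Lemma~\ref{lemma:spl-closed}, and your reconstruction is consistent with that, including the preliminary reductions (every maximal ideal over a support point is again in the support, $X$ inherits Noetherianity and finite dimension from $Y$ via the Swan correspondence, and $\delta(F_\p)\geqslant\delta(F_\m)$ for $\p\subseteq\m$).
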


\begin{proof}
	The proof is similar to that of \cite[Lemma~4.2]{Bai1} and does not rely on the requirement in \cite{Bai1} that $S$ be a module-finite $R$-algebra.  The proof here depends only on Lemma~\ref{lemma:spl-closed} from this paper.
\end{proof}

We can now state and prove our joint generalization of Bass~\cite[Theorem~9.3]{Bas} and Dress~\cite[Theorem~2]{Dress}. The following corollary of our main theorem recovers Bass when we assume that $P$ is a projective $S$-module and that $N=S=E$.  Corollary~\ref{corollary:gen-Bass} reduces to Dress when we require $M$ to be a direct summand of a direct sum of finitely presented $S$-modules and we require $P$ to be a direct summand of a direct sum of finitely many copies of $N$ over $S$.

\begin{corollary}[{Joint Generalization of Bass and Dress}]\label{corollary:gen-Bass}
	Let $K$, $L$, $M$, $N$, and $P$ be right modules over a ring $S$ that is an algebra over a commutative ring $R$, and let $E:=\End_S(N)$ denote the $R$-algebra of all $S$-linear endomorphisms of $N$.
	Assume the following:
	\begin{enumerate}
		\item $Y:=\Max(R)\cap\Supp_R(N)$ is a finite-dimensional Noetherian subspace of the Zariski space $\Spec(R)$.
		\item $N$ is a finitely presented $S$-module, and $E$ is a module-finite $R$-algebra.
		\item $P$ is a direct summand of a direct sum of finitely presented right $S$-modules; $P$ is a direct summand of $M$ over $S$; and $N_\m^{\oplus (1+\dim(Y))}$ is a direct summand of $P_\m$ over $S_\m$ for every $\m\in Y$.
	\end{enumerate}
	Then $N\oplus L\cong N\oplus M\Longrightarrow L\cong M$.  More generally, if $K$ is a direct summand of a direct sum of finitely many copies of~$N$ over $S$, then $K\oplus L\cong K\oplus M\Longrightarrow L\cong M$.
\end{corollary}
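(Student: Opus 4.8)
The plan is to deduce this corollary from our main theorem (Theorem~\ref{theorem:main-later}): out of the distinguished direct summand $P$ of $M$ I would manufacture a finitely generated left $E$-submodule $F$ of $\Hom_S(M,N)$ satisfying Condition~(3) of the main theorem, and then quote that theorem verbatim.

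First I would verify Conditions~(1) and~(2) of Theorem~\ref{theorem:main-later}. Condition~(2) coincides with the present Condition~(2). For Condition~(1), recall from Section~\ref{sec:foundations} that $E$ being a module-finite $R$-algebra forces $\Supp_R(N)=\Supp_R(E)$ to be closed in $\Spec(R)$, say $\Supp_R(N)=\Var(I)$ for some ideal $I$ of $R$; thus $Y=\Max(R)\cap\Supp_R(N)$ is homeomorphic to $\Max(R/I)$ and $X=j$-$\Spec(R)\cap\Supp_R(N)$ is homeomorphic to $j$-$\Spec(R/I)$. By Swan~\cite[Corollary following Proposition~1]{Swa}, $\Max(R/I)$ and $j$-$\Spec(R/I)$ have equal dimension and one is Noetherian precisely when the other is; since $Y$ is finite-dimensional and Noetherian by the present Condition~(1), so is $X$. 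As Noetherianness and the subspace topology are intrinsic, $X$ is then a Noetherian subspace of $\Spec(R)$, exactly as Condition~(1) of the main theorem requires.

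Next I would apply Lemma~\ref{lemma:spl-fg-L} to the modules $N$ and $P$: its Conditions~(1),~(2),~(3) are supplied by the present Conditions~(1),~(2), and by the first and third clauses of the present Condition~(3), respectively. This yields a finitely generated left $E$-submodule $F_0$ of $\Hom_S(P,N)$ with $\delta((F_0)_\p)\geqslant 1+\dim_X(\p)$ for every $\p\in X$. Now I would transport $F_0$ into $\Hom_S(M,N)$ using the second clause of Condition~(3): fix a split inclusion $\iota\colon P\to M$ and a retraction $\pi\colon M\to P$ with $\pi\iota=1_P$; then $\phi\mapsto\phi\pi$ is an injective left $E$-linear map $\Hom_S(P,N)\to\Hom_S(M,N)$, injective because $(\phi\pi)\iota=\phi$. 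Put $F:=\{\phi\pi:\phi\in F_0\}$, a finitely generated left $E$-submodule of $\Hom_S(M,N)$. The key point is that $\delta$ does not drop under this passage: for $\p\in X$, if $g=(g_1,\ldots,g_k)^{\top}\in(F_0)_\p^{\oplus k}$ is a split surjection from $P_\p$ onto $N_\p^{\oplus k}$ with section $h$, then $(g_1\pi_\p,\ldots,g_k\pi_\p)^{\top}\in F_\p^{\oplus k}$ is a split surjection from $M_\p$ onto $N_\p^{\oplus k}$ with section $\iota_\p h$, since $(g\pi_\p)(\iota_\p h)=g(\pi_\p\iota_\p)h=gh$ is the identity. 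Hence $\delta(F_\p)\geqslant\delta((F_0)_\p)\geqslant 1+\dim_X(\p)$ for all $\p\in X$, so $F$ witnesses Condition~(3) of Theorem~\ref{theorem:main-later}. Invoking that theorem with $K$, $L$, $M$, $N$ and this $F$ delivers both asserted implications, including the more general one when $K$ is a direct summand of a direct sum of finitely many copies of $N$.

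I do not anticipate a genuine obstacle here, as all the real difficulty has been absorbed into Lemma~\ref{lemma:main} and Lemma~\ref{lemma:spl-fg-L}. The only delicate points are the bookkeeping that composing with a retraction $M_\p\to P_\p$ can only raise $\delta$, and the transfer of the Noetherian, finite-dimensional hypothesis from $Y$ to $X$ via Swan's theorem. Afterward I would record the promised specializations: taking $P$ projective and $N=S=E$ recovers Bass~\cite[Theorem~9.3]{Bas}, while additionally requiring $M$ to be a direct summand of a direct sum of finitely presented $S$-modules and $P$ to be a direct summand of a direct sum of finitely many copies of $N$ over $S$ recovers Dress~\cite[Theorem~2]{Dress}.
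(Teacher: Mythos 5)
Your proposal is correct and follows the same route as the paper's own proof: both verify Conditions (1) and (2) of Theorem~\ref{theorem:main-later}, invoke Lemma~\ref{lemma:spl-fg-L} to produce a finitely generated left $E$-submodule of $\Hom_S(P,N)$ with the required local $\delta$-bounds, and then regard it inside $\Hom_S(M,N)$ via the split inclusion $P\hookrightarrow M$ before appealing to the main theorem. You have merely spelled out the details the paper leaves implicit (the transfer of the Noetherian, finite-dimensional hypothesis from $Y$ to $X$ via Swan, and the check that precomposing with the retraction $M_\p\to P_\p$ cannot decrease $\delta$), and both of these are correct.
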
	

\begin{proof}
	From Condition (1), we glean that $X:=j$-$\Spec(R)\cap\Supp_R(N)$ is a Noetherian space.  Condition (2) here coincides with Condition (2) of our main theorem.  Conditions (1)--(3), in tandem with Lemma~\ref{lemma:spl-fg-L}, indicate that $\Hom_S(P,N)$ contains a finitely generated left $E$-submodule $F$ of $\Hom_S(M,N)$ such that $\delta(F_\p)\geqslant 1+\dim_X(\p)$ for every $\p\in X$.  To finish our proof, we now simply appeal to our main theorem.
\end{proof}

We close this section with a joint generalization of Theorem~\ref{theorem:short} and the De~Stefani--Polstra--Yao Cancellation Theorem~\cite[Theorem~3.14]{DSPY}.  Theorem~\ref{theorem:short} constitutes the special case of the following result in which $K=N$ and $M=P$ and $R=S$.    De~Stefani--Polstra--Yao covers the special case of Corollary~\ref{corollary:gen-DSPY} in which $M=P$ and $N=R=S$.  

\begin{corollary}[{Joint Generalization of Theorem~\ref{theorem:short} and De~Stefani--Polstra--Yao}]\label{corollary:gen-DSPY}
	Let $K$,~$L$, $M$, $N$, and $P$ be right modules over a ring $S$ that is an algebra over a commutative ring~$R$.
	Assume the following:
	\begin{enumerate}
		\item $R$ is a Noetherian ring.
		\item $N$ is a finitely generated $S$-module, and $S$ is a module-finite $R$-algebra.
		\item $P$ is a finitely generated direct summand of $M$ over $S$, and $N_\p^{\oplus (1+\dim_X(\p))}$ is a direct summand of $P_\p$ over $S_\p$ for every $\p\in X:=j$-$\Spec(R)\cap\Supp_R(N)$.
	\end{enumerate}
	Then $N\oplus L\cong N\oplus M\Longrightarrow L\cong M$.  More generally, if $K$ is a direct summand of a direct sum of finitely many copies of~$N$ over $S$, then $K\oplus L\cong K\oplus M\Longrightarrow L\cong M$.
\end{corollary}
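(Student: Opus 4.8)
The plan is to deduce the corollary from our main theorem (Theorem~\ref{theorem:main-later}) by verifying Conditions (1)--(3) of that theorem. Condition (1) is immediate: a Noetherian ring has a Noetherian prime spectrum, so $X:=j$-$\Spec(R)\cap\Supp_R(N)$, being a subspace of $\Spec(R)$, is a Noetherian space. For Condition (2), the crux is the standard fact that a module-finite algebra over a commutative Noetherian ring is itself left and right Noetherian: since $S$ is finitely generated as an $R$-module and $R$ is Noetherian, $S$ is a Noetherian $R$-module, and every one-sided ideal of $S$, being an $R$-submodule of $S$, is finitely generated over $S$. Hence $N$, being finitely generated over the Noetherian ring $S$, is finitely presented over $S$. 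Furthermore $N$ is finitely generated over $R$ (since $S$ is), so $E=\End_S(N)$ is an $R$-submodule of $\End_R(N)$, which is finitely generated over $R$ because $N$ is a finitely generated module over the Noetherian ring $R$; thus $E$ is a module-finite $R$-algebra.

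The substantive step is Condition (3): I would construct a finitely generated left $E$-submodule $F$ of $\Hom_S(M,N)$ with $\delta(F_\p)\geqslant 1+\dim_X(\p)$ for every $\p\in X$. Write $M=P\oplus P'$ over $S$ and let $\pi\colon M\twoheadrightarrow P$ be the projection; composition with $\pi$ identifies $\Hom_S(P,N)$ with a left $E$-submodule $F$ of $\Hom_S(M,N)$. Since $P$ and $N$ are finitely generated over $R$, the module $\Hom_S(P,N)$ is an $R$-submodule of the finitely generated $R$-module $\Hom_R(P,N)$, hence finitely generated over $R$ and \emph{a fortiori} over $E$; so $F$ is finitely generated over $E$. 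To check the $\delta$ bound, fix $\p\in X$ and put $t:=1+\dim_X(\p)$. As $S$ is Noetherian, $P$ is finitely presented over $S$, so localization gives $F_\p\cong\Hom_{S_\p}(P_\p,N_\p)$, sitting inside $\Hom_{S_\p}(M_\p,N_\p)$ via composition with $\pi_\p$. By Condition (3) of the corollary there is a split surjection $q\colon P_\p\twoheadrightarrow N_\p^{\oplus t}$; composing with the split surjection $\pi_\p\colon M_\p\twoheadrightarrow P_\p$ produces a split surjection $q\circ\pi_\p\colon M_\p\twoheadrightarrow N_\p^{\oplus t}$ each of whose $t$ components factors through $\pi_\p$ and therefore represents an element of $F_\p$. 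Thus $q\circ\pi_\p$ is a split surjection lying in $F_\p^{\oplus t}\subseteq\Hom_{S_\p}(M_\p,N_\p^{\oplus t})$, whence $\delta(F_\p)\geqslant t=1+\dim_X(\p)$.

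Having verified Conditions (1)--(3), I would invoke Theorem~\ref{theorem:main-later} to obtain at once the implication $N\oplus L\cong N\oplus M\Longrightarrow L\cong M$ together with its stated generalization to any $K$ that is a direct summand of a direct sum of finitely many copies of $N$ over $S$. I expect the only genuine obstacle to be the bookkeeping inside Condition (3)---in particular, that localization commutes with $\Hom_S(P,-)$ here (which is precisely where finite presentation of $P$ over the Noetherian ring $S$ enters) and that the split surjection assembled from the two projections really does lie in $F_\p^{\oplus t}$ rather than merely in $\Hom_{S_\p}(M_\p,N_\p^{\oplus t})$. Everything else is standard for module-finite algebras over commutative Noetherian rings, as recorded in Section~\ref{sec:foundations}.
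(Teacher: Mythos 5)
Your proposal is correct and follows essentially the same route as the paper's own (much terser) proof: both take $F:=\Hom_S(P,N)$, embedded in $\Hom_S(M,N)$ via the projection $M\twoheadrightarrow P$, and reduce directly to Theorem~\ref{theorem:main-later} after checking that $S$ is Noetherian, $N$ is finitely presented, $E$ is module-finite over $R$, and $\delta(F_\p)\geqslant 1+\dim_X(\p)$. You have merely spelled out the bookkeeping that the paper leaves implicit.
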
	

\begin{proof}
	Condition (1) guarantees that $X$ is a Noetherian space.  Conditions (1) and (2) work together to ensure that $N$ is a finitely presented $S$-module and that $E:=\End_S(N)$ is a module-finite $R$-algebra.  Conditions (1)--(3) collectively imply that $F:=\Hom_S(P,N)$ is a finitely generated left $E$-submodule of $\Hom_S(M,N)$ such that $\delta(F_\p)\geqslant 1+\dim_X(\p)$ for every $\p\in X$.  An application of our main theorem completes the proof.
\end{proof}

Note that the preceding corollary covers Example~\ref{example:1}.  In the next section, we demonstrate that this example dodges many previously published cancellation theorems, including all those cited in our introduction. 

\section{A cancellation example}\label{sec:examples}

In our introductory section, we claim that Example~\ref{example:1} is not covered by any of the cancellation theorems preceding Theorem~\ref{theorem:short}.  We prove this assertion in the present section of the paper after establishing the two lemmas below.

\begin{lemma}\label{lemma:help}
	Assume the following:
		\begin{enumerate}
			\item $S$ is a Noetherian local factorial domain of dimension at least $2$ with maximal ideal~$\q$.
			\item $M:=\q^{\oplus g}\oplus S^{\oplus h}$ for some positive integers $g$ and $h$.
			\item $N$ is an $S$-module such that $N^{\oplus (g+h)}$ is a direct summand of $M$ over $S$.
		\end{enumerate}
	Then $N=0$.
\end{lemma}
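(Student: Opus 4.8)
The plan is to show first that $\rank(N)\leqslant 1$, then to dispose of the case $\rank(N)=0$ at once and to derive a contradiction in the case $\rank(N)=1$ by means of the biduality map $P\mapsto P^{\ast\ast}:=\Hom_S(\Hom_S(P,S),S)$.

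For the rank bound: writing $N^{\oplus(g+h)}\oplus Q\cong M$ for a suitable $S$-module $Q$, the module $M=\q^{\oplus g}\oplus S^{\oplus h}$ is finitely generated over the Noetherian ring $S$ and torsion-free over the domain $S$, hence so are its direct summands $N$ and $Q$. Comparing ranks over the fraction field of $S$ and using that a nonzero ideal such as $\q$ has rank $1$ gives $(g+h)\rank(N)+\rank(Q)=\rank(M)=g+h$, so $\rank(N)\leqslant 1$. If $\rank(N)=0$, then the torsion-free module $N$ is $0$ and we are done. If $\rank(N)=1$, then $\rank(Q)=0$, so the torsion-free module $Q$ is $0$ and $N^{\oplus(g+h)}\cong\q^{\oplus g}\oplus S^{\oplus h}$; the rest of the proof rules this out.

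For a finitely generated $S$-module $P$ put $P^{\ast}:=\Hom_S(P,S)$ and let $\theta_P\colon P\to P^{\ast\ast}$ be the canonical biduality map; it is a natural transformation of additive functors, so $\coker(\theta_{P\oplus P'})\cong\coker(\theta_P)\oplus\coker(\theta_{P'})$, and $\theta_S$ is an isomorphism, so $\coker(\theta_S)=0$. Because $S$ is factorial and $\q$ is a prime ideal of height at least $2$, the fact recorded in Section~\ref{sec:foundations} (``Grade and depth'') gives $\grade_S(\q)\geqslant 2$, so the natural map $S\to\Hom_S(\q,S)$ is an isomorphism. Hence the inclusion $\iota\colon\q\hookrightarrow S$ induces an isomorphism $\iota^{\ast}$ (indeed, under $S^{\ast}\cong S\cong\q^{\ast}$ it is the identity), so $\iota^{\ast\ast}$ is an isomorphism, and the naturality square for $\theta$ then identifies $\theta_\q$ with $\iota$. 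Therefore $\coker(\theta_\q)\cong S/\q$, a module of length $1$. Applying $\coker\theta_{(-)}$ to the isomorphism $N^{\oplus(g+h)}\cong\q^{\oplus g}\oplus S^{\oplus h}$ now yields $\coker(\theta_N)^{\oplus(g+h)}\cong(S/\q)^{\oplus g}$; since the right-hand side has length $g$, the module $\coker(\theta_N)$ has finite length $\ell$ with $(g+h)\ell=g$, which is impossible because $g\geqslant 1$ forces $\ell\geqslant 1$ and then $(g+h)\ell\geqslant g+h>g$. This contradiction shows $\rank(N)=1$ cannot occur, so $N=0$.

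I expect the main obstacle to be the identification $\coker(\theta_\q)\cong S/\q$: the dualization has to be organized so that one pass through $(-)^{\ast\ast}$ erases the difference between $\q$ and $S$ (both becoming $S$) while the genuine discrepancy between the two sides of $N^{\oplus(g+h)}\cong\q^{\oplus g}\oplus S^{\oplus h}$ survives in the cokernel of the biduality map --- and this is exactly where the hypotheses that $S$ is factorial and $\dim S\geqslant 2$ are used, through $\grade_S(\q)\geqslant 2$ (equivalently, through the failure of $\q$ to be reflexive).
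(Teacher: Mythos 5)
Your proof is correct, but it takes a genuinely different route from the paper's. The paper reduces $N^{\oplus f}\cong\q^{\oplus g}\oplus S^{\oplus h}$ (with $f=g+h$) to an isomorphism of ideals $N^f\cong\q^g$ via the Heitmann--Wiegand theorem, then uses the grade-$\geqslant 2$ condition to realize this isomorphism as multiplication by an element $a\in S$, uses factoriality to extract an $f$-th root $b$ of $a$, and concludes with Nakayama's Lemma applied to $\q^f=\q^g$. Your argument instead invokes the biduality natural transformation $\theta_P\colon P\to P^{**}$: the grade condition forces $\coker(\theta_\q)\cong S/\q$ to have length $1$, and additivity of $\coker\theta_{(-)}$ applied to $N^{\oplus f}\cong\q^{\oplus g}\oplus S^{\oplus h}$ yields the impossible equation $(g+h)\ell=g$ for the length $\ell$ of $\coker(\theta_N)$. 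Your approach buys two things: it avoids the external Heitmann--Wiegand citation entirely, and it uses only $\grade_S(\q)\geqslant 2$ (i.e.\ $\depth S\geqslant 2$) rather than the full strength of factoriality, so it proves a modestly more general statement. What it gives up is the explicit description of $N$ as a twisted ideal, which the paper's route produces as a byproduct but does not actually need. Both proofs share the same preliminary reduction (rank considerations forcing $Q=0$, which the paper handles rather tersely and you spell out carefully), and both hinge on the same crucial input --- the non-reflexivity of $\q$, encoded as $\grade_S(\q)\geqslant 2$ --- so the underlying mechanism is the same even though the bookkeeping differs.
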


\begin{proof}
	Suppose, by way of contradiction, that $N\neq 0$.  Then $N$ is a rank-1 torsion-free $S$-module and can, therefore, be identified with a nonzero ideal of $S$.  Moreover, letting $f:=g+h$, we find that
	\[
	N^{\oplus f}\cong \q^{\oplus g}\oplus S^{\oplus h}.
	\]
	By Heitmann--Wiegand~\cite[Theorem~8]{Heit-Wie}, the display implies that the ideals $N^f$ and $\q^g$ are isomorphic.  Since $\q^g$ has height at least $2$ in the Noetherian factorial domain $S$, the ideal $\q^g$ has grade at least $2$ on $S$, and so the natural map $S\rightarrow \Hom_S(\q,S)$ is an isomorphism.  Thus, there is $a\in S$ such that $N^f=a\q^g$.  Now, for every height-1 prime ideal $\p$ of $S$, the equation $N^f=a\q^g$ implies that $N_\p^f=aS_\p$.  This observation, combined with our assumption that $S$ is a factorial domain, ensures that there is an element $b$ of~$S$ (possibly a unit of $S$) with $b^fS=aS$.  Hence, $N^f=b^f\q^g$.  Letting $F:=b^{-1}N$, we may write $F^f=\q^g$, which implies that $F\subseteq \q$.  Thus, $F^f\subseteq \q^f \subseteq \q^g= F^f$, forcing $\q^f=\q^g$.  Since $f-g=h\geqslant 1$, Nakayama's Lemma then implies that $\q=0$, a contradiction.
\end{proof}

\begin{lemma}\label{lemma:more}
	Assume the following:
	\begin{enumerate}
		\item $R$ is a commutative ring with a Noetherian maximal spectrum of finite dimension $e$.
		\item $S$ is a $d$-dimensional affine $\CC$-algebra that is also an $R$-algebra. 
		\item $N$ is a faithful $S$-module such that $E:=\End_S(N)$ is a module-finite $R$-algebra.
	\end{enumerate}
	Then $1+d=\sr(E)\leqslant 1+e$.
\end{lemma}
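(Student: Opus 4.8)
The plan is to squeeze $\sr(E)$ between $1+d$ and $1+d$, and separately to bound it above by $1+e$. The bound $\sr(E)\leqslant 1+e$ is immediate from Conditions~(1) and~(3): $E$ is a module-finite algebra over the commutative ring $R$, whose maximal spectrum is Noetherian of finite dimension $e$, so Bass's Stable Range Theorem~\cite[Theorem~11.1]{Bas} gives $\sr(E)\leqslant 1+\dim(\Max(R))=1+e$. It therefore suffices to establish the equality $\sr(E)=1+d$, which I would derive from the two inequalities $1+d\leqslant\sr(E)$ and $\sr(E)\leqslant 1+d$.

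For the first inequality, I would realize $S$ as a central subring of $E$. Since $S$ is an affine $\CC$-algebra it is commutative, so for each $s\in S$ the homothety $\lambda_s\colon N\to N$ defined by $\lambda_s(x)=xs$ is a well-defined element of $E=\End_S(N)$, and a one-line computation using the $S$-linearity of the elements of $E$ shows that $\lambda_s$ commutes with every member of $E$. Hence $s\mapsto\lambda_s$ is a ring homomorphism from $S$ to the center of $E$ carrying $1_S$ to $1_E$, and its kernel is $\Ann_S(N)$, which vanishes because $N$ is faithful over $S$ by Condition~(3). Identifying $S$ with its image, we have $S\subseteq E$, whence $\sr(S)\leqslant\sr(E)$ by Vaserstein~\cite[Lemma~3]{Vas}, while $\sr(S)=1+d$ by Suslin~\cite[Corollary~8.4]{Sus2} since $S$ is a $d$-dimensional affine $\CC$-algebra. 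Thus $1+d\leqslant\sr(E)$.

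For the reverse inequality, the crucial point is that $E$ is module-finite not only over $R$ but also over $S$. Indeed, $E$ is a finitely generated $R$-module by Condition~(3), and the $R$-action on $E$ factors through the structure map $R\to S$ followed by the central inclusion $S\hookrightarrow E$ just constructed, so any finite $R$-generating set of $E$ also generates $E$ over $S$; hence $E$ is a module-finite $S$-algebra. Being an affine $\CC$-algebra, $S$ is Noetherian and Jacobson, so $\Max(S)$ is a Noetherian space and $j$-$\Spec(S)=\Spec(S)$; combining Swan~\cite[Corollary following Proposition~1]{Swa} with this identity and the Nullstellensatz yields $\dim(\Max(S))=\dim(\Spec(S))=\dim(S)=d$. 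Applying Bass's Stable Range Theorem~\cite[Theorem~11.1]{Bas} with $S$ in the role of the base ring now gives $\sr(E)\leqslant 1+\dim(\Max(S))=1+d$. Combining the two inequalities produces $\sr(E)=1+d$, and together with the first paragraph we conclude $1+d=\sr(E)\leqslant 1+e$.

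I do not expect a genuine obstacle here beyond correctly marshalling standard input; the step that carries the argument is the observation that $E$ is module-finite over $S$, which rests on $S$ being \emph{central} in $E$ (a consequence of the commutativity of $S$, with faithfulness of $N$ needed only to make the map $S\to E$ injective) together with the routine identification $\dim(\Max(S))=\dim(S)$ for affine algebras over a field.
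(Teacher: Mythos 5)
Your proof is correct and follows essentially the same approach as the paper's: identify $S$ as a central subring of $E$ via faithfulness of $N$ and invoke Vaserstein for $\sr(S)\leqslant\sr(E)$, evaluate $\sr(S)=1+d$ by Suslin, observe that $E$ is module-finite over $S$ to apply Bass's Stable Range Theorem and get $\sr(E)\leqslant 1+d$, and finally apply Bass again over $R$ for $\sr(E)\leqslant 1+e$. The only cosmetic differences are the order in which you present the bounds and that you cite Vaserstein~\cite[Lemma~3]{Vas} where the paper cites \cite[Theorem~1]{Vas2}; both are listed as sources for the subring inequality in the paper's foundations section.
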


\begin{proof}
	Since $S$ is a $d$-dimensional affine $\CC$-algebra, we have \[1+d=\sr(S)\] by Suslin~\cite[Corollary~8.4]{Sus2}.
	Since $N$ is a faithful $S$-module, the ring $S$ is a subring of $E$, and so \[\sr(S)\leqslant\sr(E)\] by Vaserstein~\cite[Theorem~1]{Vas2}.  Since the $R$-algebra $S$ is a central subring of the module-finite $R$-algebra $E$, we see that $E$ is a module-finite $S$-algebra.  Marrying this with the fact that the $d$-dimensional affine $\CC$-algebra $S$ has a $d$-dimensional Noetherian maximal spectrum, we get \[\sr(E)\leqslant 1+d\] by Bass's Stable Range Theorem~\cite[Theorem~11.1]{Bas}. Concatenating the last three displays, we get $1+d=\sr(E)$.  Finally, since $E$ is a module-finite algebra over a commutative ring $R$ with a Noetherian maximal spectrum of finite dimension $e$, we get $\sr(E)\leqslant 1+e$ by another application of Bass's Stable Range Theorem.
\end{proof}

For the reader's convenience, we reprint Example~\ref{example:1} here:

\begin{example}[cf.~Example~\ref{example:1}]\label{example:1-7}
	\textit{Assume the following:
		\begin{enumerate}
			\item $S$ is an affine $\CC$-domain of dimension $d\geqslant 3$.
			\item $K:=\q$ is a prime ideal of $S$ such that $S_\q$ is factorial and $2\leqslant \textnormal{height}(\q)\leqslant d-1$.
			\item $M:=\q^{\oplus d}\oplus S$.
		\end{enumerate}
		Then, for every $S$-module~$L$, we have $K\oplus L\cong K\oplus M\Longrightarrow L\cong M$.}
\end{example}

Some cancellation theorems are easy to rule out as potential precedents for this example.   For instance, in Goodearl--Warfield~\cite[Theorem~18]{GW},
Krull--Schmidt~\cite[Theorem~X.7.5]{Lang},
and Evans \cite[Theorem~2]{Eva}, it is the case that $\sr(\End_S(K))=1$, but here $K$ is a proper ideal of an affine $\CC$-algebra $S$ of dimension $d\geqslant 3$ with $\grade_S(K)\geqslant 2$, so $\sr(\End_S(K))=\sr(S)=1+d\geqslant 4$ by Suslin~\cite[Corollary~8.4]{Sus2}.   In Hs\"u~\cite[Theorem~1]{Hsu}, Wiegand~\cite[Theorem~1.2]{Wie1}, and many other theorems in  
\cite{Cha}
\cite{Gur-Lev}
\cite{Hassler0}
\cite{Hassler}
\cite{HW}
\cite{Karr}
\cite{Levy}
\cite{LW}
\cite{Rush}
\cite{Wie1}
\cite{Wie2}, the ring $S$ is a module-finite algebra over a commutative ring of dimension at most 2, but here $S$ is a commutative ring of dimension at least 3.  In Matlis~\cite[Theorems~2.4 and~2.5 and Propositions~2.7 and~3.1]{Mat}, the $S$-modules $K$, $L$, and $M$ are injective whereas, here, they are nonzero finitely generated modules over the positive-dimensional Noetherian domain $S$ and are, therefore, not injective~\cite[Theorem~3.1.17]{BH}.  In Bass~\cite[Theorem~9.3]{Bas}, De~Stefani--Polstra--Yao~\cite[Theorem~3.14]{DSPY}, and numerous results in
\cite{DK}
\cite{Duc}
\cite{Gupta}
\cite{Heit}
\cite{Kap}
\cite{MS}
\cite{Qui}
\cite{Rao}
\cite{Sta}
\cite{Sus}, it is assumed that $K$, $L$, or $M$ is a projective $S$-module, but here $K$ is an ideal of height at least 2 in the commutative Noetherian ring $S$, and $K$ is a direct summand of $L$ and $M$ over~$S$, so none of the three $S$-modules in question is projective.

Dismissing Vasconcelos~\cite[Corollary]{Vasc}, Warfield~\cite[Theorems~1.2 and~1.6]{War}, and Dress~\cite[Theorem~2]{Dress} requires more work.

Recall that, in Vasconcelos, $M\cong J^{\oplus n}$ for some ideal $J$ of $S$ and some nonnegative integer~$n$.  Suppose, by way of contradiction, that this is true in our example.  Then, by rank considerations, $n=1+d$.  Applying Lemma~\ref{lemma:help}, we find that $J_\q=0$.  As a result, $M_\q=0$, contrary to our hypothesis that the domain $S$ embeds into $M$. So Vasconcelos does not apply to our example.

Warfield ensures that cancellation holds if $\sr(\End_S(K))$ is finite and $K^{\oplus\sr(\End_S(K))}$ is a direct summand of $M$ over $S$.  Of course, by induction, we can replace Warfield's specifications with the requirement that $K$ be a direct summand of a direct sum of finitely many copies of some module $N$ over $S$ for which $\sr(\End_S(N))$ is finite and $N^{\oplus\sr(\End_S(N))}$ is a direct summand of $M$ over $S$.  Suppose, by way of contradiction, that this more general hypothesis holds in our example.  Since $K$ is a faithful direct summand of a direct sum of finitely many copies of $N$ over $S$, we see that $N$ is a faithful $S$-module.  Since $N$ is a submodule of the Noetherian $S$-module $M$ by hypothesis, we see that $N$ is a Noetherian $S$-module and that, consequently, $\End_S(N)$ is a module-finite $S$-algebra.  Combining the last two observations with our assumption that $S$ is a $d$-dimensional affine $\CC$-algebra, we can apply Lemma~\ref{lemma:more} with $R:=S$ to conclude that $\sr(\End_S(N))=1+d$.   Applying this to our hypothesis that $N^{\oplus \sr(\End_S(N))}$ is a direct summand of $M$ over $S$, we find that $N_\q^{\oplus (1+d)}$ is a direct summand of $M_\q$ over $S_\q$.  Lemma~\ref{lemma:help} then implies that $N_\q=0$, contrary to the claim that $N$ is a faithful Noetherian $S$-module.  This contradiction certifies that even the more general version of Warfield described above does not cover our example. 

In Dress, $S$ is an algebra over a commutative ring $R$ with a finite-dimensional Noetherian maximal spectrum; $N$ is an $S$-module such that $\End_S(N)$ is a module-finite $R$-algebra and such that $N_\m^{\oplus (1+\dim(\Max(R)))}$ is a direct summand of $M_\m$ over $S_\m$ for every $\m\in\Max(R)$; and $K$ is a direct summand of a direct sum of finitely many copies of $N$ over $S$.  Suppose, by way of contradiction, that these three conditions hold in our example.  As in our treatment of Warfield above, since $K$ is a faithful direct summand of a direct sum of finitely many copies of $N$ over $S$, we see that $N$ is a faithful $S$-module.  Since $S$ is a $d$-dimensional affine $\CC$-algebra by hypothesis, we may  set $e:=\dim(\Max(R))$ and apply Lemma~\ref{lemma:more} to conclude that 
\[
1+d\leqslant 1+e.
\]
Let $\sigma:R\rightarrow S$ be the structure map of the $R$-algebra $S$.  Since $\q$ is a prime ideal of~$S$, the set $\mathfrak{p}:=\sigma^{-1}(\q)$ is a prime ideal of $R$, and $\sigma(R\setminus\p)$ is a multiplicatively closed subset of~$S\setminus\q$.  By Dress's local condition, $N_\mathfrak{p}^{\oplus (1+e)}$ is a faithful direct summand of $M_\mathfrak{p}$ over~$S_\p$.  Combining two previous observations with the last display, we find that $N_\mathfrak{\q}^{\oplus (1+d)}$ is a nonzero direct summand of $M_\mathfrak{\q}$ over~$S_\q$, contrary to Lemma~\ref{lemma:help}.  Dress, therefore, cannot yield our example.

To close, we would like to return to the issue of why neither Bass nor De~Stefani--Polstra--Yao applies to our example.  Above, we simply point to the fact that $K$ is not a projective $S$-module, and indeed this shows that we cannot apply Bass or De~Stefani--Polstra--Yao \textit{directly}.  However, the reader might wonder whether there is a way to \textit{reduce} our example to one that is covered by Bass or De~Stefani--Polstra--Yao.  Below, we entertain three ruminations of this type and arrive at a dead end in each situation.

One approach would be to take the isomorphism $K\oplus L\cong K\oplus M$ and apply $\Hom_S(K,-)$ or $\Hom_S(-,S)$  to it.  In the first case, we would find that $S\oplus\Hom_S(K,L)\cong S\oplus S^{\oplus (1+d)}$, and then we would infer from Bass or De~Stefani--Polstra--Yao that $\Hom_S(K,L)\cong S^{\oplus (1+d)}$; however, with the last isomorphism, we would fail to recover $M=K^{\oplus d}\oplus S$.  In the second case, we would initially discover that $S\oplus \Hom_S(L,S)\cong S\oplus S^{\oplus (1+d)}$ and subsequently marshal Bass or De~Stefani--Polstra--Yao to conclude that $\Hom_S(L,S)\cong S^{\oplus (1+d)}$, but by then $M$ would have once again escaped us.

Another approach would be to apply $\Hom_S(-,K)$ to the isomorphism $K\oplus L\cong K\oplus M$.  In this approach, we would learn that $S\oplus \Hom_S(L,K)\cong S\oplus S^{\oplus d}\oplus K$ and then gather from Bass or De~Stefani--Polstra--Yao that $\Hom_S(L,K)\cong S^{\oplus d}\oplus K$.  Upon applying $\Hom_S(-,K)$ to the last isomorphism, we would ascertain further that $\Hom_S(\Hom_S(L,K),K)\cong M$.  However, at that point, we would need to know that $L\cong\Hom_S(\Hom_S(L,K),K)$ in order to prove that $L\cong M$.  In contrast, \textit{our main theorem reveals that} $L\cong\Hom_S(\Hom_S(L,K),K)$ \textit{as a corollary of the fact that} $L\cong M$.

A third approach would be to try to deduce, without using our main theorem, that the isomorphism $K\oplus L\cong K\oplus M$ implies the isomorphism $S\oplus L\cong S\oplus M$ in our example; this approach is inspired by Chase~\cite[Theorem~3.6]{Cha}.  The hope underlying this approach is that, upon procuring the second isomorphism, we would be able to apply Bass or De~Stefani--Polstra--Yao.  To dismiss this approach, it suffices to show that $M$ satisfies neither the local condition in Bass nor the local condition in De~Stefani--Polstra--Yao.  We can settle the case of Bass by mimicking our treatment of Dress above with $K$ and $N$ redefined as $S$ and with $M$ still equal to $\q^{\oplus d}\oplus S$ for some prime ideal $\q$ of $S$ such that $S_\q$ is factorial and $2\leqslant \textnormal{height}(\q)\leqslant d-1$.  For the other case, first recall that, in De~Stefani--Polstra--Yao, $S$~is a commutative ring such that $S_\q^{\oplus (1+\dim_X(\q))}$ is a direct summand of $M_\q$ over $S_\q$.  Suppose, by way of contradiction, that our example satisfies this condition.  Let $c:=\dim_X(\q)$.  Then $M_\q\cong G_\q\oplus S_\q^{\oplus (1+c)}$ for some $S$-module $G$.  Since $M_\q=\q_\q^{\oplus d}\oplus S_\q$ by hypothesis and since $\sr(S_\q)=1$, Evans's Cancellation Theorem~\cite[Theorem~2]{Eva} implies that $G_\q\oplus S_\q^{\oplus c}\cong \q_\q^{\oplus d}$.  Applying $\Hom_{S_\q}(-,\q_\q)$ to the last isomorphism, we find that
\[
\Hom_{S_\q}(G_\q,\q_\q)\oplus \q_\q^{\oplus c}\cong S_\q^{\oplus d}.
\]
Since $S_\q$ is a Noetherian local factorial domain of dimension at least 2 with maximal ideal~$\q_\q$, we have $\grade_{\q_\q}(\q_\q)=1$ and $\grade_{S_\q}(\q_\q)\geqslant 2$.  Hence, regarding the last display, the grade of $\q_\q$ on the left side is at most 1 whereas the grade of $\q_\q$ on the right side is at least 2, a contradiction.  So, even if there is a way to prove that $S\oplus L\cong S\oplus M$ without our main theorem, neither Bass nor De~Stefani--Polstra--Yao can be applied to this isomorphism to yield the conclusion that $L\cong M$. 

In summary, our main theorem reveals new information relative to more than eight cancellation results spanning four schools of module cancellation research.  Each school exploits one of the following types of mathematical objects:  module-finite algebras over commutative rings of dimension at most 2, direct sums of indecomposable modules, modules with endomorphism rings of finite stable rank, and projective modules.  Our main theorem responds to an adjuration of Eisenbud and Evans from 1973 calling for a unified cancellation theorem that, on one hand, circumvents projectivity conditions and, on the other hand, covers a robust collection of finitely generated modules over every commutative Noetherian ring.  Our main theorem fulfills this request by generalizing three established cancellation results while weakening a projectivity hypothesis in each one.  The cancellation example from this section provides one concrete way to distinguish our main theorem from its many predecessors.

\section*{Acknowledgements}

\noindent Versions of some of the results in this paper first appeared in a dissertation~\cite{Bai3} submitted to the Department of Mathematics and Statistics at Georgia State University in partial fulfillment of the requirements for the degree of Doctor of Philosophy.  This research did not receive any specific grant from funding agencies in the public, commercial, or not-for-profit sectors.

\bibliographystyle{amsplain}
\bibliography{baidya-references}

\end{document}